\theoremstyle{plain}
    \newtheorem{theorem}{Theorem}[section]
    \newtheorem{proposition}[theorem]{Proposition}
    \newtheorem{lemma}[theorem]{Lemma}
    \newtheorem{corollary}[theorem]{Corollary}
\theoremstyle{definition}
\newcommand{\bbZ}{\mathbb{Z}}
\newcommand{\bbC}{\mathbb{C}}
\newcommand{\bbG}{\mathbb{G}}
\newcommand{\bbQ}{\mathbb{Q}}
\newcommand{\cO}{\mathcal{O}}
\newcommand{\bsK}{{\boldsymbol{K}}}
\begin{document}
\title[$p$-adic Fourier theory]{Integral structures on $p$-adic Fourier theory}
\author[Bannai]{Kenichi Bannai}
\address{Department of Mathematics, Keio University, 3-14-1 Hiyoshi, Kouhoku-ku, Yokohama 223-8522, Japan}
\email{bannai@math.keio.ac.jp}
\author[Kobayashi]{Shinichi Kobayashi}
\address{Mathematical Institute, Tohoku University, 6-3 Aramaki-aza-Aoba, Aoba-ku, Sendai  980-8578, Japan}
\email{shinichi@math.tohoku.ac.jp}
\date{March 27, 2015}

\begin{abstract}  
 	In this article, we give an explicit construction of the $p$-adic Fourier transform by Schneider and Teitelbaum,
	which allows for the investigation of the integral property.
	As an application, we give a certain  integral basis of  the space of $K$-locally analytic  functions 
	on the ring of integers $\cO_K$ for any finite extension $K$ of $\bbQ_p$, generalizing the basis constructed by
	Amice for locally analytic functions on $\bbZ_p$.  
	We also use our result to prove congruences of Bernoulli-Hurwitz numbers at non-ordinary (i.e. supersingular) primes 
	originally investigated by Katz and Chellali. 
\end{abstract}

\thanks{The first and second authors were supported by the JSPS Postdoctoral Fellowships for Research Abroad 2005-2007/2004-2006. 
This work was also supported in part by KAKENHI (21674001, 25707001, 26247004)}

\subjclass[2010]{11S40}

\maketitle

\section{Introduction}

One important method in studying the congruences  and $p$-adic properties
of important invariants in number theory is the use of $p$-adic measures interpolating such values.
Such theory was applied to obtain the Kummer congruence between special values of Riemann zeta
function as well as the construction of the $p$-adic $L$-functions for elliptic curves
with ordinary reduction at $p$.   When dealing with the non-ordinary case,
it is necessary to use the theory of $p$-adic analytic distributions, which is a generalization
of the theory of $p$-adic measures.  
For such $p$-adic distributions on $\bbZ_p$, the Amice transform gives a 
one-to-one correspondence between
$\bbC_p$-valued distributions on $\bbZ_p$ and rigid analytic functions on the
open unit disc.    The general idea is to study the congruences and $p$-adic properties of the interpolated
invariants through the $p$-adic property of the rigid analytic function corresponding 
to the $p$-adic distribution.  However, contrary to the case of $p$-adic measures,
the Amice transform is not well-behaved integrally for general $p$-adic distributions,
hence it is necessary to investigate in detail the precise integral structure of this transform.
Amice \cite[\S 10]{Am} investigated the precise integral structure of the Amice transform. 

Let $\cO_K$ be the ring of integers of a finite extension $K$ of $\bbQ_p$.  
In \cite[\S 4]{ST}, Schneider and Teitelbaum constructed the  $p$-adic Fourier transform,
which is a one-to-one correspondence between $\bbC_p$-valued
distributions on $\cO_K$ and rigid analytic functions on an open unit disc.
The purpose of this article is to give an explicit and elementary construction of the $p$-adic Fourier transform
of Schneider-Teitelbaum, which allows investigation of the precise integral structure of this correspondence.
We then determine an integral structure on the ring of locally analytic functions on $\cO_K$.
The integrality of the $p$-adic Fourier transform for general $K$ is even less well behaved than for the
case of $\bbQ_p$; even if the
rigid analytic function corresponding to a $p$-adic distribution has bounded coefficients,
the $p$-adic distribution may not necessarily be a $p$-adic measure.
As an application of our result, we obtain  the congruences originally proved by Katz 
\cite[Theorem 3.11]{Ka2} and Chellali \cite[Th\'eor\`em 1.1]{Ch}
of Bernoulli-Hurwitz numbers, which are essentially special values of $p$-adic $L$-functions of CM elliptic curves
at non-ordinary primes.


We now give the exact statements of our theorems. 
Let $p$ be a rational prime and let $|\cdot|$ be the absolute value of $\bbC_p$ such that $|p|=p^{-1}$. 
Let $\pi$ be an uniformizer of $\cO_K$, and let $\mathbb{F}_q$ be the residue field of $\cO_K$.
We define $LA_N(\cO_K, \mathbb{C}_{p})$ to be the  space  of  locally analytic  functions on 
$\cO_K$ of order $N$ which take values in $\mathbb{C}_{p}$.  
Namely, $f(x) \in LA_N(\cO_K,\mathbb{C}_{p})$ if and only if $f(x)$ is defined as 
a power series $\sum_{n=0}^\infty a_n(x-a)^n$ on $a+\pi^N\cO_K$ for any $a \in \cO_K$. 
We let $\Vert f \Vert_{a,N}:=\max_n\{|a_n \pi^{nN}| \}$. 
The space $LA_N(\cO_K, \mathbb{C}_{p})$ is a $p$-adic 
Banach space induced by the norm
 $\max_{a \in \cO_K}\{\Vert f \Vert_{a,N}\}$ and we denote by 
 $LA_N(\cO_K, \mathbb{C}_{p})_{0}$  the submodule of elements whose 
 absolute values are less than or equal to $1$.
We let $\mathcal{G}$ be a Lubin-Tate group of $K$ corresponding to $\pi$, and
let $\varpi_p \in \bbC_p$ be a $p$-adic period of $\mathcal{G}$. 
We let  
$$\overline{\rho}(k)=\max_{k \leq m}\{|{m!}/{\varpi_p^m}|\}, 
\quad 
\underline{\rho}(k)
=\min_{0 \leq m \leq k}\{{|m!}/{\varpi_p^m}|\}.$$

Let $\varphi(t)$ be a rigid analytic function on the open unit disc.
In other words, $\varphi(t)$ is a power series of the form $\varphi(t)=\sum_{n=0}^\infty c_n t^n$ such that 
$|c_n| r_0^n\rightarrow 0$ for any  $0<r_0<1$. 
Let $\mu_{\varphi}$ be the distribution on $\cO_K$ corresponding  to $\varphi(t)$ 
given by Schneider-Teitelbaum's $p$-adic Fourier theory \cite[Theorem 2.3]{ST}.  
Then we have the following

\begin{theorem}\label{theorem; intro}
Let $f$ be a $K$-locally analytic function in  $LA_N(\cO_{K}, {\bbC_p})$. 
Then we have 
\begin{align}
\left|\int_{a+\pi^N\cO_K} \; f(x) \;d\mu_{\varphi} \right| \; \leq \; 
 \overline{\rho}(0) 
 \left| \frac{\pi}{q}\right|^{N}  \;\Vert f \Vert_{a,N} 
 \Vert \varphi\Vert_{N}
\end{align}
 where \begin{equation}
\Vert \varphi\Vert_N:=\max_{k}
\left\{\; |c_k| \overline{\rho}\left(\left[\frac{k}{q^N}\right] \right) \; \right\}
\end{equation} 
and  $[x]$ is the integral part of $x$.
\end{theorem}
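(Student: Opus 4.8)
The plan is to reduce the estimate to a bound on the local moments of $\mu_\varphi$ and then to compute those moments through the Schneider--Teitelbaum correspondence. First I would linearize the problem. On the ball $a+\pi^N\cO_K$ the function $f$ is given by the convergent power series $f(x)=\sum_{n\ge 0}a_n(x-a)^n$, so by continuity of the distribution pairing
\begin{equation*}
\int_{a+\pi^N\cO_K} f(x)\,d\mu_\varphi=\sum_{n\ge 0}a_n\,M_n,\qquad M_n:=\int_{a+\pi^N\cO_K}(x-a)^n\,d\mu_\varphi.
\end{equation*}
Since $\Vert f\Vert_{a,N}=\max_n|a_n\pi^{nN}|$, we have $|a_n|\le |\pi|^{-nN}\Vert f\Vert_{a,N}$, and the ultrametric inequality reduces the theorem to the pointwise moment estimate
\begin{equation*}
|\pi|^{-nN}\,|M_n|\;\le\;|\overline{\gamma}(0)|\,\left|\tfrac{\pi}{q}\right|^{N}\Vert\varphi\Vert_N\qquad(n\ge 0).
\end{equation*}

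Next I would express $M_n$ through $\varphi$. Writing $\mathbf{1}_{a+\pi^N\cO_K}=q^{-N}\sum_{\chi}\chi(\,\cdot-a)$ as an average over the $q^N$ characters $\chi$ of $\cO_K/\pi^N\cO_K$, and identifying each $\chi$ with a point of $\mathcal{G}[\pi^N]$ under the Schneider--Teitelbaum duality, the restriction of $\mu_\varphi$ to the ball corresponds to a twist of $\varphi$ by these $\pi^N$-torsion points of $\mathcal{G}$. Applying the $n$-th invariant derivation attached to the Lubin--Tate logarithm $\log_{\mathcal{G}}$ and evaluating at $t=0$ then presents $M_n$ as a $q^{-N}$-weighted sum, over $\chi$, of the $n$-th derivatives of $\varphi$ at the corresponding torsion points. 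The factor $m!/\varpi_p^m$ enters here because the invariant derivation differs from $d/dt$ by the period $\varpi_p$, so that the $m$-th moment naturally carries the weight $m!/\varpi_p^m$; this is exactly the quantity recorded by $\overline{\gamma}$.

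Finally I would track how the coefficients $c_k$ of $\varphi$ contribute. Expanding each twisted derivative in the $c_k$ and using that the points of $\mathcal{G}[\pi^N]$ have absolute value a power of $\pi$, the average over $\chi$ annihilates all but the coefficients lying in a single residue class modulo the $q^N$ characters; the surviving coefficient $c_k$ feeds into the local moment of order $[k/q^N]$ and is therefore weighted by $\overline{\gamma}([k/q^N])$, which is precisely the weight built into $\Vert\varphi\Vert_N$. Collecting the $q^{-N}$ from the averaging together with the $|\pi|^N$ produced by the valuations of the torsion points yields the constant $|\pi/q|^N$. The main obstacle is this last step: one must show that the integers produced by the character sum, together with the coefficients of $\log_{\mathcal{G}}$ and of its inverse, are $p$-integral, so that the only denominators are the period factors captured by $\overline{\gamma}$, and that the valuations combine to give exactly $|\pi/q|^N$ rather than a weaker power. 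I expect this to follow from an explicit analysis of the Lubin--Tate exponential coefficients and of the distribution of the torsion points, carried out degree by degree in $k$.
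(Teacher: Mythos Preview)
Your reduction to the moment bound
\[
|\pi|^{-nN}\,|M_n|\;\le\;|\overline{\gamma}(0)|\,\left|\tfrac{\pi}{q}\right|^{N}\Vert\varphi\Vert_N
\]
and your identification of $M_n$ as $q^{-N}\varpi_p^{-n}\,\partial_{\mathcal G}^n\sum_{t_N\in\mathcal G[\pi^N]}\varphi_a(t\oplus t_N)\big|_{t=0}$ are exactly what the paper does. The paper also expands $\varphi_a(t)=\sum_k c_k^{(a)}t^k$ coefficientwise, so everything comes down to bounding the single power sum
\[
S_{n,k}\;:=\;\pi^{-N}\sum_{t_N\in\mathcal G[\pi^N]}\partial_{\mathcal G}^n\,(t\oplus t_N)^k\Big|_{t=0}.
\]

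The gap is in your mechanism for estimating $S_{n,k}$. The claim that ``the average over $\chi$ annihilates all but the coefficients lying in a single residue class modulo the $q^N$ characters'' is imported from the $\widehat{\bbG}_m$ case, where $\sum_{\zeta^{p^N}=1}\zeta^k$ is either $p^N$ or $0$. For a general Lubin--Tate law the addition $t\oplus t_N$ is not linear in $t_N$, the power sums $\sum_{t_N}t_N^k$ do not vanish off a residue class, and no orthogonality isolates a single $c_k$. Relatedly, the factor $|\pi|^N$ you attribute to ``the valuations of the torsion points'' does not by itself produce the crucial extra $|\pi|^{Nn}$ needed to cancel $|a_n|\le|\pi|^{-Nn}\Vert f\Vert_{a,N}$.

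What the paper actually uses is: (i) Coleman's lemma, giving $\pi^{-N}\sum_{t_N}(t\oplus t_N)^k=g_k([\pi^N]t)$ with $g_k\in\cO_K[[t]]$; this is the source of both the $|\pi|^N$ and, via the chain rule through $[\pi^N]t$, the $|\pi|^{Nn}$. (ii) Newton's identities, by setting $F(t,X)=\prod_{t_N}(1-(t\oplus t_N)X)$ so that $-\partial_XF/F$ generates the power sums, expanding $1/F$ as a geometric series in $G(t,X)=F(0,X)-F(t,X)\in([\pi^N]t)\,\cO_K[[t]][X]$, and tracking $X$-degrees: since $\deg_X F=q^N$, a contribution to the coefficient of $X^{k}$ forces $l+m\ge[k/q^N]$ copies of $[\pi^N]t$ or of $\pi$, and then the estimate $|\partial_{\mathcal G}^n([\pi^N]t)^l|_{t=0}|\le|\pi^{Nn}\varpi_p^n\,\overline{\gamma}[l,n]|$ (proved from coefficient bounds for $\exp_{\mathcal G}^l$) yields the weight $\overline{\gamma}([k/q^N])$. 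The $\overline{\gamma}$-weight thus comes from degree counting in the Newton expansion, not from character orthogonality; your final paragraph would need to be replaced by this argument.
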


The crucial difference from the case when $K=\bbQ_p$ is the fact that $|\pi/q|>1$ when $K \neq \bbQ_p$.   
A finer version of the above is given as Theorem \ref{theorem; main}. 
Since $\overline{\rho}\left(\left[\frac{k}{q^N}\right] \right) \sim p^{-kr}$ where 
$r=1/eq^N(q-1)$, the value 
$\Vert \varphi\Vert_N$ is approximated  by 
$$ \Vert \varphi\Vert_{\overline{\bold{B}}(p^{-r})}=\max_{x \in \overline{\bold{B}}(p^{-r})}\{\;|\varphi(x)|\;\}$$ 
where $\overline{\bold{B}}(p^{-r}) \subset \bbC_p$ is the closed disc of radius $p^{-r}$ centered  at the origin.  

%

As an application of our main theorem,  we 
obtain  an estimate of the Fourier coefficients of Mahler like expansion of 
functions in $LA_N(\cO_{K}, \bbC_p)$. 
Let $\lambda(t)$ be the formal logarithm of  $\mathcal{G}$, and following \cite{ST},
we define the polynomial $P_n(x)$ by  
$$\exp(x \lambda(t))=\sum_{n=0}^\infty \; P_n(x) t^n.$$
Note that when $\mathcal{G}$ is the multiplicative formal group $\mathcal{G} = \widehat\bbG_m$, then $\lambda(t) = 1+t$ and the above expansion 
is simply
$$
	(1+t)^x = \sum_{n=0}^\infty \binom{x}{n} t^n.
$$
Hence the polynomial $P_n(t)$ is the generalization of the binomial polynomial
$$
	\binom{x}{n}  = \frac{x (x-1) \cdots (x-n+1)}{n!}.
$$
Then we have the following.

\begin{theorem}(=Theorem \ref{theorem; mahler L_N}.)
The series $\sum_{n=0}^\infty a_n P_n(x \varpi_p)$ converges to 
an element of  $LA_N(\cO_{K}, \mathbb{C}_p)_{0}$ for $a_n$  satisfying
\begin{align*}
	|a_n| & \leq \underline{\rho}\left(\left[ \frac{n}{q^N}\right]\right), &
	\lim_{n \rightarrow 0} &|a_n|/\underline{\rho}\left(\left[ \frac{n}{q^N}\right]\right)= 0.
\end{align*}
Conversely, if $f(x) \in LA_N(\cO_{K}, \mathbb{C}_p)_{0}$, then it has 
an expansion 
$$f(x)=\sum_{n=0}^\infty a_n P_n(x \varpi_p)$$ 
of the form 
\begin{align*}
	|a_n| & \leq c \left |\frac{\pi}{q} \right |^N  \overline{\rho}\left(\left[ \frac{n}{q^N}\right]\right), &
	\lim_{n \rightarrow 0} &|a_n|/\overline{\rho}\left(\left[ \frac{n}{q^N}\right]\right)= 0,
\end{align*}
where $ c=1$ if $e \leq p-1$,  and  $c =\overline{\rho}(0) $, otherwise. 
\end{theorem}

\begin{corollary}(=Corollary \ref{corollary; basis}.)
Suppose
$$e_{N,n}:=\underline{\gamma}\left(\left[\frac{n}{q^N}\right] \right) P_n(x \varpi_p), \qquad (n=0,1, \cdots),$$  
where $\underline{\gamma}(u)$ is an element in $\bbC_p$ such that $\underline{\rho}(u) = |\underline{\gamma}(u)|$.
We denote  by $L_N$  the $\cO_{\bbC_p}$-module topologically generated by 
$e_{N,n}$, then 
$$\overline{\rho}(0)^{-2}\left |\frac{q}{\pi} \right |^N LA_N(\cO_K, {\bbC_p})_{0}\;\subset\; L_N
\; \subset \; LA_N(\cO_K, {\bbC_p})_{0}.$$
In particular, 
$L_N\otimes \bbQ_p=LA_N(\cO_K, {\bbC_p})$, namely, 
the functions $e_{N,n}$ form a Banach basis of $LA_N(\cO_K, {\bbC_p})$. 
Moreover, if $e \leq p-1$, then 
$$\left |\frac{q}{\pi} \right |^{N+1} LA_N(\cO_K, {\bbC_p})_{0}\;\subset\; L_N
\; \subset \; LA_N(\cO_K, {\bbC_p})_{0}.$$
In particular, 
If $\cO_K=\bbZ_p$, we recover Amice's result \cite[Th\'eor\`em 3]{Am}, namely,  
$$\left[\frac{n}{p^N}\right]! \binom{x}{n}, \qquad (n=0,1, \cdots)$$ form 
a topological basis of  $LA_N(\bbZ_p, {\bbC_p})_{0}$. 
(Actually, we can show that it is a basis of $LA_N(\bbZ_p, \bbQ_p)_{0}$.)
\end{corollary}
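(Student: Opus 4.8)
The plan is to derive both chains of inclusions directly from the two parts of the preceding Theorem, with the comparison between the two sides governed by a single estimate on the ratio $|\overline{\gamma}(k)/\underline{\gamma}(k)|$. Throughout I would write $k=\left[\frac{n}{q^N}\right]$, let $v=-\log_p|\cdot|$, and set $h(m):=v(m!/\varpi_p^m)$, so that $v(\overline{\gamma}(k))=\min_{m\geq k}h(m)$ and $v(\underline{\gamma}(k))=\max_{0\leq m\leq k}h(m)$. From the valuation of the period one has $h(m)=\frac{m}{e(q-1)}-\frac{s_p(m)}{p-1}$, where $s_p(m)$ is the base-$p$ digit sum; the leading additive term $\frac{m}{e(q-1)}$ is exactly the asymptotic $|\overline{\gamma}([k/q^N])|\sim p^{-kr}$ recorded after Theorem \ref{theorem; intro}.

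First I would dispose of the upper inclusion $L_N\subset LA_N(\cO_K,\cO_{\bbC_p})$. By definition $L_N$ consists of the convergent series $\sum_n\beta_n e_{N,n}$ with $\beta_n\in\cO_{\bbC_p}$ and $\beta_n\to 0$; writing $e_{N,n}=\underline{\gamma}(k)P_n(x\varpi_p)$ this is $\sum_n a_nP_n(x\varpi_p)$ with $a_n=\underline{\gamma}(k)\beta_n$ and $|\beta_n|\leq 1$, so part i) of the Theorem applies verbatim and places the sum in $LA_N(\cO_K,\cO_{\bbC_p})$.

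For the lower inclusions, take $f\in LA_N(\cO_K,\cO_{\bbC_p})$ and expand it by part ii) as $f=\sum_n a_nP_n(x\varpi_p)$ with $a_n=\overline{\gamma}(k)b_n$ and $|b_n|\leq c\,|\pi/q|^N$. Substituting $P_n(x\varpi_p)=\underline{\gamma}(k)^{-1}e_{N,n}$ gives $f=\sum_n \frac{\overline{\gamma}(k)}{\underline{\gamma}(k)}b_n\,e_{N,n}$, so membership of a scalar multiple $\alpha f$ in $L_N$ reduces to the numerical bound $|\alpha|\cdot\sup_k|\overline{\gamma}(k)/\underline{\gamma}(k)|\cdot c\,|\pi/q|^N\leq 1$ (the coefficients tend to $0$ automatically since $b_n\to 0$ and the ratio is bounded). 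Everything therefore rests on the Key Lemma $\sup_k|\overline{\gamma}(k)/\underline{\gamma}(k)|\leq|\overline{\gamma}(0)|$, which in valuation reads $\min_{m\geq k}h(m)-\max_{0\leq j\leq k}h(j)\geq\min_{m\geq 0}h(m)$. This I would prove from the superadditivity $h(a+b)\geq h(a)+h(b)$: if the inner maximum is attained at $j_0\leq k$, then for any $m\geq k$ one has $h(m)\geq h(j_0)+h(m-j_0)\geq h(j_0)+\min_{l\geq 0}h(l)$. Since $m\mapsto\frac{m}{e(q-1)}$ is additive, superadditivity of $h$ is precisely the subadditivity of the digit sum $s_p$. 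Combining the Key Lemma with $c=|\overline{\gamma}(0)|$ yields $|\alpha|\leq|\overline{\gamma}(0)|^{-2}|q/\pi|^N$, which is the first inclusion.

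The refinement for $e\leq p-1$ is where the genuine work lies, and I expect it to be the main obstacle. Here part ii) gives $c=1$, so the same argument already yields the scaling $|\overline{\gamma}(0)|^{-1}|q/\pi|^N$; to reach the sharper $|q/\pi|^{N+1}$ it then suffices to prove the Subclaim $|\overline{\gamma}(0)|\leq|\pi/q|$, i.e. $\min_m h(m)\geq\frac1e-f$. Unlike the formal Key Lemma this is a true optimization over digit expansions. The strategy is to show that $m=q-1$ (the $f$ lowest digits all equal to $p-1$) is the global minimizer, with $h(q-1)=\frac{q-1}{e(q-1)}-\frac{f(p-1)}{p-1}=\frac1e-f$; an exchange argument tracking the effect of a single digit $d$ in position $i$, which changes $h$ by $d\bigl(\tfrac{p^i}{e(q-1)}-\tfrac1{p-1}\bigr)$, should show under $e\leq p-1$ that digits in positions $\geq f$ never lower $h$ while the minimum over the low positions forces saturation, hence $m=q-1$. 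Granting the Subclaim, $|q/\pi|^{N+1}\leq|\overline{\gamma}(0)|^{-1}|q/\pi|^N$ gives the second inclusion. The remaining assertions are formal: tensoring the two-sided inclusion with $\bbQ_p$ collapses the bounded scalar factors, proving $L_N\otimes\bbQ_p=LA_N(\cO_K,\bbC_p)$ and the Banach basis statement; and for $\cO_K=\bbZ_p$ one has $e=f=1$, $q=p$, $v(\varpi_p)=0$, $h(m)=v_p(m!)$, so $|\overline{\gamma}(0)|=1$, the factor $|q/\pi|^{N+1}$ is $1$, whence $L_N=LA_N(\bbZ_p,\cO_{\bbC_p})$, and since $|\underline{\gamma}([n/p^N])|=|[n/p^N]!|$ with $\varpi_p$ a unit, the automorphism $x\mapsto x/\varpi_p$ carries $e_{N,n}$ to a unit multiple of $\left[\frac{n}{p^N}\right]!\binom{x}{n}$, recovering Amice's basis.
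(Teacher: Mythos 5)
Your argument is correct, and its skeleton --- the upper inclusion from part i) of Theorem \ref{theorem; mahler L_N}, the lower inclusions by rewriting the part ii) expansion as $f=\sum_n\bigl(\overline{\gamma}(k_n)/\underline{\gamma}(k_n)\bigr)b_n\,e_{N,n}$ and bounding that ratio uniformly in $n$ --- is exactly the (unwritten) derivation the paper intends, since it states the corollary as an immediate consequence of the theorem. The real difference is that you re-prove the two numerical inputs instead of quoting them. Your Key Lemma $|\overline{\gamma}(k)|\leq|\overline{\gamma}(0)|\,|\underline{\gamma}(k)|$ is part ii) of the first proposition of Section \ref{Power sums}, and the paper's proof there is essentially yours: superadditivity of $h$ is precisely integrality of the binomial coefficient $\binom{k_1}{k_2}$. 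Your Subclaim $|\overline{\gamma}(0)|\leq|\pi/q|$ for $e\leq p-1$, which you single out as the main obstacle, is also already in the paper: Proposition \ref{proposition; factorial/varpi} concludes with $|\overline{\gamma}(0)|=|\pi/q|$ in exactly this case, so citing it (together with the proposition of Section \ref{Power sums}) collapses the whole proof to a few lines. Where you genuinely diverge is in how you establish this fact: you exploit that $h(m)=\frac{m}{e(q-1)}-\frac{s_p(m)}{p-1}$ is additive over the base-$p$ digits of $m$, so the minimization becomes a digit-by-digit exchange with exact minimizer $m=q-1$; the paper instead proves Proposition \ref{proposition; factorial/varpi} by induction on the exponent of $q$ via Lemma \ref{lemma; binom}. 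Your route is shorter and more transparent for the statement you need, reading off the minimizer and the equality $|\overline{\gamma}(0)|=|\pi/q|$ at once; the paper's induction yields the finer information (the term $v_p([i/q]!)$ for $i\geq q$ and the characterization of equality) that is used elsewhere, e.g.\ in Proposition \ref{proposition; gamma(n)}, but is not needed for this corollary. Two small points you should make explicit, though both are at the level of brevity the paper itself adopts: identifying $L_N$ with the set of convergent series $\sum_n\beta_n e_{N,n}$, $\beta_n\in\cO_{\bbC_p}$, $\beta_n\to 0$, uses $\Vert e_{N,n}\Vert_N\leq 1$ (Proposition \ref{proposition; p_n}, or your upper inclusion); and the Banach-basis assertion requires uniqueness of the expansion, which comes from Schneider--Teitelbaum's theorem rather than from the inclusions alone.
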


As another application, in Theorem \ref{theorem, KC}, we derive from 
our estimate of the integral the congruence of 
Bernoulli-Hurwitz numbers  $BH(n)$ at supersingular primes established by Katz 
\cite[Theorem 3.11]{Ka2} and Chellali \cite[Th\'eor\`em 1.1]{Ch}.

\subsection{Acknowledgment}

Part of this research was conducted while the first author was visiting the Ecole Normale Sup\'erieure in Paris,
and the second author Institut de Math\'ematiques de Jussieu.  The authors would like to thank their
hosts Yves Andr\'e and Pierre Colmez for hospitality.  The authors would also like to thank 
 Seidai Yasuda for reading an earlier version of the manuscript and pointing out mistakes.

\section{Schneider-Teitelbaum's $p$-adic Fourier theory.}  

Let $K$ be a finite extension of $\bbQ_p$ and 
$k=\mathbb{F}_q$ the residue field.  
Let $e$ be the absolute ramification index of $K$. 
We fix a uniformaizer $\pi$ of $K$ and 
let $\mathcal{G}$ be the Lubin-Tate formal group of $K$ 
associated to $\pi$. 
For a natural number $N$ and an element  $a$ of $\cO_{K}$, 
we define the space $A(a+\pi^N\cO_{K}, \bbC_p)$
 of $K$-analytic functions on $a+\pi^N\cO_{K}$ as follows.  
 $$
\{ f:a+\pi^N\cO_{K} \rightarrow \bbC_p \;|\; f(x)=\sum_{n=0}^\infty a_n (x-a)^n, \,
a_n \in \bbC_p,\, \pi^{nN}a_n \rightarrow 0 
 \}.$$
We equip  the space $A(a+\pi^N\cO_{K}, \bbC_p)$ with the norm 
$$
\Vert f \Vert_{a,N}:=\mathrm{max}_n \; \{|\pi^{nN}a_n |\}
=\mathrm{max}_{x \in a+\pi^N\cO_{\bbC_p}} \; \{|f(x) |\}.$$
We also define  the space 
$LA_N(\cO_{K}, \bbC_p)$ of  locally $K$-analytic functions on $\cO_{K}$ of 
order $N$ 
by 
 $$
\{ f:\cO_{K} \rightarrow \bbC_p \;|\;\; f \,|_{a+\pi^N\cO_{K}} \in A(a+\pi^N\cO_{K}, \bbC_p)\quad  \text{for any} \; a \in \cO_{K} 
 \}$$
which is a Banach space by the norm $\mathrm{max}_a \; \{\Vert f \Vert_{a, N}\}$. 
We denote by 
 $LA_N(\cO_K, \mathbb{C}_{p})_{0}$  the submodule of elements whose 
 absolute values are less than or equal to $1$.
We put $$LA(\cO_{K}, \bbC_p)=\bigcup_N LA_N(\cO_{K}, \bbC_p)$$
and equip it with the inductive limit topology. 
A  continuous $\bbC_p$-linear function $LA(\cO_{K}, \bbC_p) 
\rightarrow \bbC_p$ is called an $\bbC_p$-valued distribution 
on $\cO_{K}$. 
We denote the space of $\bbC_p$-valued distributions 
on $\cO_{K}$ by $D(\cO_{K}, \bbC_p)$, namely,  
$$D(\cO_{K}, \bbC_p)= \varprojlim_N \mathrm{Hom}_{\bbC_p}^{\mathrm{cont}} (LA_N(\cO_{K}, \bbC_p), \bbC_p).$$ 
We write an element of $D(\cO_{K}, \bbC_p)$ symbolically as 
$$\int d\mu: LA(\cO_{K}, \bbC_p) 
\rightarrow \bbC_p, \quad f \mapsto \int f d\mu=\int_{\cO_{K}} f (x)\, d\mu(x).$$
The space  $D(\cO_{K}, \bbC_p)$ has  a product structure given by the convolution product.
For a compact open set $U$ of $\cO_K$, we let 
$$\int_{U} f (x)d\mu(x):=\int_{\cO_{K}} f (x) \cdot 1_U(x) \,d\mu(x)$$
where $1_U$ is the characteristic function of $U$.  

The structure of 
$D(\cO_{K}, \bbC_p)$ is well-known for the case $K=\bbQ_p$ and described through 
 the so called  Amice transform. 
We denote by $R^{\mathrm{rig}}$ 
 the ring of 
rigid analytic functions on the open disc of radius $1$, 
namely, the ring of power series of the form 
$\varphi(T)=\sum_{n=0}^\infty c_n T^n$ such that 
$|c_n| r_0^n\rightarrow 0$ for any  $0<r_0<1$. 
Then there exists an isomorphism of topological $\bbC_p$-algebras  
\begin{equation}\label{equation; structure of distribution}
 D({\bbZ_p}, \bbC_p)\cong R^{\mathrm{rig}}, \qquad \mu \mapsto \varphi
\end{equation}
that is characterized by the equation 
$$c_n=\int_{\bbZ_p} \binom{x}{n} d\mu(x)$$ 
or equivalently 
$$\varphi (T)=\int_{\bbZ_p} (1+T)^x\, d\mu(x).$$
For the Mahler expansion
$$f(x)=\sum_{n=0}^\infty a_n \binom{x}{n}$$
of $f \in LA(\bbZ_p, \bbC_p)$, Amice showed that 
$|a_n| r^n \rightarrow 0$ for some $r>1$ and  
hence we can compute the integral as 
\begin{equation}\label{equation: amice}
\int_{\bbZ_p} f(x)\, d\mu=\sum_{n=0}^\infty a_n c_n.
\end{equation}
Schneider-Teitelbaum \cite[Theorem 2.3]{ST} constructed an
 isomorphism analogous with (\ref{equation; structure of distribution}) for 
a general local field $K$. 

Let $\varpi_p$ be a $p$-adic period of $\mathcal{G}$. 
Namely,  by Tate's theory of $p$-divisible groups and the Lubin-Tate theory we have
$$\mathrm{Hom}_{\cO_{\bbC_p}}(\mathcal{G}, \widehat{\bbG}_m ) \cong 
\mathrm{Hom}_{\bbZ_p}(T_p\mathcal{G}, T_p\widehat{\bbG}_m )\cong \cO_{K}.$$ 
(The last isomorphism is non-canonical.)
Hence there exists a generator of the
$\cO_K$-module $\mathrm{Hom}_{\cO_{\bbC_p}}(\mathcal{G}, \widehat{\bbG}_m )$, which 
is written in the form of the integral power series $\exp(\varpi_p \lambda(t)) \in \cO_{\bbC_p}[[t]]$
where $\lambda(t)$  is the logarithm of  $\mathcal{G}$. 
The element 
$\varpi_p \in {\cO_{\bbC_p}}$ is determined uniquely up to an element of $\cO_{K}^\times$. 
We fix such a $\varpi_p$ and  call it the $p$-adic period  of $\mathcal{G}$. 
(If the height of $\mathcal{G}$ is equal to $1$, the inverse of $\varpi_p$ is often called 
a $p$-adic period of  $\mathcal{G}$, for example, see \cite{dS}). 
It is known that $|\varpi_p|=p^{-s}$, where 
$s=\frac{1}{p-1}-\frac{1}{e(q-1)}$ (see  Appendix of \cite{ST} or 
an elementary proof in \cite{Box1} when $K/\bbQ_p$ is unramified). 
We define the polynomials $P_n(X) \in K[X]$  by the formal expansion   
$$\exp(X\lambda(t))=\sum_{n=0}^\infty P_n(X)\, t^n.$$
Note that in the case $\mathcal{G}=\widehat{\bbG}_m$, $\pi=p$ and 
$\lambda(t)=\log(1+t)$,  
the polynomial $P_n(X)$ is no other than the binomial polynomial  
$\binom{X}{n}$.  
By construction, $P_n(x\varpi_p)$ is in $\cO_{\bbC_p}$
if $x \in \cO_{K}$.


\begin{theorem}$($Schneider-Teitelbaum \cite[\S 4]{ST}$)$:
i) The series $$\sum_{n=0}^\infty a_n P_n(x \varpi_p)$$ converges to 
an element of  $LA(\cO_{K}, \bbC_p)$ if $\overline{\lim}_n \,|a_n|^{\frac{1}{n}}<1$. 
Conversely, any locally $K$-analytic function $f(x)$ on $\cO_{K}$ has a unique  expansion  
$$f(x)=\sum_{n=0}^\infty a_n P_n(x \varpi_p)$$ 
for some sequence $(a_n)_n$ in $\bbC_p$ such that  $\overline{\lim}_n \,|a_n|^{\frac{1}{n}}<1$. \\
ii) There exists an isomorphism of topological $\bbC_p$-algebras  
\begin{equation}\label{equation; structure of distribution K}
 D(\cO_{K}, \bbC_p)\cong R^{\mathrm{rig}}.
\end{equation}
having the following characterization property: if $\varphi(T)=\sum_{n=0}^\infty c_n T^n$ 
corresponds to a distribution $\mu$, 
then 
$$c_n=\int_{\cO_{K}} P_n(x\varpi_p) \, d\mu(x)$$ 
or equivalently 
$$\varphi (t)=\int_{\cO_{K}} \exp(x \varpi_p\lambda(t)) \, d\mu(x).$$
Schneider and Teitelbaum called the power series $\varphi(t)$ corresponding to $\mu$ the 
Fourier transform of $\mu$ and denoted it by $F_\mu(t)$.
\end{theorem}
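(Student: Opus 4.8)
The plan is to realize the correspondence as a Fourier transform against the family of locally analytic characters $\chi_t(x) := \exp(x\varpi_p\lambda(t))$, $x \in \cO_K$, indexed by $t$ in the open unit disc. Since $\lambda$ is the logarithm of $\mathcal{G}$, one has $\exp((x+y)\varpi_p\lambda(t)) = \exp(x\varpi_p\lambda(t))\exp(y\varpi_p\lambda(t))$, so each $\chi_t$ is a character of the additive group $\cO_K$; this single identity is what will turn convolution into multiplication at the very end. Granting part (i), part (ii) is then almost formal: one defines $\mu \mapsto \varphi_\mu(t) = \int_{\cO_K}\chi_t(x)\,d\mu(x)$, and expanding $\chi_t(x) = \sum_n P_n(x\varpi_p)t^n$ gives $\varphi_\mu(t) = \sum_n c_n t^n$ with $c_n = \int P_n(x\varpi_p)\,d\mu$, which is exactly the asserted characterization. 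So the real content is the basis statement (i) together with the well-definedness and bijectivity of this transform.

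For part (i) I would first record the triangularity of the $P_n$. Because $\lambda(t) = t + O(t^2)$, the generating identity $\exp(X\lambda(t)) = \sum_k X^k\lambda(t)^k/k!$ shows $P_n(X) = X^n/n! + (\text{lower degree in } X)$; hence $\{P_n(x\varpi_p)\}_{n\geq 0}$ is obtained from the monomials $\{x^n\}$ by an upper-triangular change of basis with invertible diagonal entries $\varpi_p^n/n!$. Consequently finite linear combinations of the $P_n(x\varpi_p)$ exhaust the polynomials, and on polynomials the expansion $f = \sum a_n P_n(x\varpi_p)$ is unique; uniqueness in general then follows by density. For convergence, recall $P_n(x\varpi_p)\in\cO_{\bbC_p}$, so $|P_n(x\varpi_p)|\leq 1$ and the condition $\overline{\lim}_n|a_n|^{1/n}<1$ already gives uniform convergence to a continuous function. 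The point that requires work is that the limit is locally analytic of some finite order $N$.

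The crux is therefore a two-sided control of the local analytic norms $\Vert P_n(x\varpi_p)\Vert_{a,N}$. I would expand $P_n(x\varpi_p)$ as a power series in $(x-a)$ on each disc $a+\pi^N\cO_K$ and bound its coefficients in terms of the factorial-over-period denominators governing $\exp(x\varpi_p\lambda(t))$, aiming for an estimate of the shape $\Vert P_n(x\varpi_p)\Vert_{a,N}\leq C\,\rho_N^{-n}$ with $\rho_N\to 1$ as $N\to\infty$ (reflecting that the radius of analyticity tends to the full unit disc as the order increases). Given $f$ with $\overline{\lim}_n|a_n|^{1/n}=\ell<1$, one then chooses $N$ large enough that $\rho_N>\ell$, which forces $|a_n|\,\Vert P_n(x\varpi_p)\Vert_{a,N}\to 0$ and places the sum in $LA_N(\cO_K,\bbC_p)$. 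This quantitative estimate — equivalently, the asymptotic size of $|n!/\varpi_p^n|$ captured by $\overline{\gamma}$ and $\underline{\gamma}$ — is the main obstacle, and is precisely what the refined Theorem later sharpens.

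Finally, for part (ii) I would dualize these same estimates. Well-definedness of $\mu\mapsto\varphi_\mu$ amounts to showing $|c_n|r_0^n\to 0$ for every $r_0<1$, which follows from the lower bound on $\Vert P_n(x\varpi_p)\Vert_{a,N}$ together with continuity of $\mu$ on some $LA_N$. Injectivity holds because the $c_n$ determine $\mu$ on every $P_n(x\varpi_p)$, hence on all of $LA(\cO_K,\bbC_p)$ by density; surjectivity is obtained by sending $\varphi=\sum c_n t^n$ to the functional $\sum a_n P_n(x\varpi_p)\mapsto\sum a_n c_n$ and checking, via the matched upper and lower norm bounds, that this pairing converges and is continuous on each $LA_N$. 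The matching of norms upgrades the bijection to a topological isomorphism, and the character identity $\chi_t(x+y)=\chi_t(x)\chi_t(y)$ gives $\varphi_{\mu*\nu}=\varphi_\mu\,\varphi_\nu$ directly from the definition of the convolution, completing the identification of the two algebra structures.
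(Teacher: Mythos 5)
Your architecture --- the transform $\mu\mapsto\varphi_\mu(t)=\int_{\cO_K}\exp(x\varpi_p\lambda(t))\,d\mu$, the coefficients $c_n=\int_{\cO_K}P_n(x\varpi_p)\,d\mu$, and two-sided control of the norms $\Vert P_n(x\varpi_p)\Vert_{a,N}$ --- is the same skeleton as the paper's own reproof of this theorem in Sections \ref{Power sums}--4, but the two load-bearing steps are missing, and they are precisely where the content of the theorem lies. First, the crucial estimate $\Vert P_n(x\varpi_p)\Vert_{a,N}\le C\rho_N^{-n}$ with $\rho_N\to 1$ is only announced (``aiming for an estimate of the shape''), never proved. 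In the paper this is Proposition \ref{proposition; p_n}: its upper bound uses the coefficient estimates for powers of $\lambda$ and $\exp_{\mathcal G}$ (Proposition \ref{lemma; lambda exp coefficients}), and its lower bound is deduced by applying the integral estimate (\ref{equation; STint-estimate3}) to $1=\int_{\cO_K}P_n(x\varpi_p)\,d\mu_{\varphi_n}$, which in turn rests on the whole of Section \ref{Power sums} (Coleman's Lemma \ref{lemma; coleman}, the generating function $F(t,X)=\prod_{t_N}(1-(t\oplus t_N)X)$, and the power-sum estimates of Proposition \ref{proposition; power estimate}). Deferring this estimate is deferring the proof.

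Second, your arguments for the converse half of (i) and for surjectivity in (ii) are circular. The functional $\sum_n a_nP_n(x\varpi_p)\mapsto\sum_n a_nc_n$ is defined only on functions that already admit an expansion, and it is continuous on $LA_N$ only if the coefficient maps $f\mapsto a_n$ are bounded; these two facts together are exactly the assertion that $(P_n(x\varpi_p))_n$ is a Banach basis of $LA_N$, i.e.\ the converse of (i), which your sketch never establishes (your ``uniqueness in general follows by density'' fails for the same reason: density of polynomials gives nothing unless the coordinate functionals are already known to be continuous, and triangularity only handles finite sums). The paper breaks this circle by constructing $\mu_\varphi$ explicitly, with no reference to expansions, via power sums over Lubin--Tate torsion points,
$$\int_{a+\pi^N\cO_K}(x-a)^n\,d\mu_\varphi:=\frac{1}{q^N\varpi_p^n}\left.\left(\partial^n_{\mathcal G}\sum_{t_N\in\mathcal G[\pi^N]}\varphi_a(t\oplus t_N)\right)\right|_{t=0},$$
proving convergence, independence of all choices, and continuity from the Section \ref{Power sums} estimates (Proposition \ref{proposition; estimate}, Theorem \ref{theorem; main}), and only then recovering the expansion of an arbitrary $f$ by setting $a_n=\int f\,d\mu_{\varphi_n}$ and killing the remainder by testing against Dirac distributions (Theorem \ref{theorem; mahler L_N}). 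A further slip: well-definedness of $\mu\mapsto\varphi_\mu$, i.e.\ $|c_n|r_0^n\to 0$ for every $r_0<1$, needs the \emph{upper} bound on $\Vert P_n(x\varpi_p)\Vert_N$ (together with continuity of $\mu$ on $LA_N$ for $N$ large), not the lower bound as you wrote; the lower bound is what your continuity-of-the-pairing step would require.
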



\section{Power sums}\label{Power sums}

In this section, we give an estimate of the absolute value of 
the power sum 
$$	S_{N,n,k}:=
	\partial^n_\mathcal{G} 
	\sum_{t_N \in \mathcal{G}[\pi^N]} (t \oplus t_N)^k |_{t=0},
$$
where $x \oplus y = \mathcal{G}(x,y)$, $\partial_\mathcal{G}$ is the differential operator $ \lambda'(t)^{-1} (d/dt)$, and 
$\mathcal{G}[\pi^N]$ is the kernel of the multiplication $[\pi^N]$ of $\mathcal{G}$. 
This estimate is crucial for everything in this paper. 
We use Newton's method to compute this value. 

We define $\overline{\rho}[l,n]$ and $\underline{\rho}[l,n]$ by
$$\overline{\rho}[l,n]=\max_{l \leq m \leq n }\{|{m!}/{\varpi_p^m}|\}, \qquad 
\underline{\rho}[l,n]=\min_{l \leq m \leq n}\{|{m!}/{\varpi_p^m}|\}$$
for $l \leq n$. 
For $l >n$, we put $\overline{\rho}[l,n]=0$ and 
$\underline{\rho}[l,n]=\infty$. 
Then $\overline{\rho}(k)=\overline{\rho}[k,\infty]$ 
and $\underline{\rho}(k)=\underline{\rho}[0,k]$ are as in the introduction. 

\begin{proposition}\label{prop, gamma estimate}
i) The values $\overline{\rho}(k)$ and $\underline{\rho}(k)$ are 
decreasing with $k$. \\
ii) We have 
$$ \underline{\rho}(k) \leq \overline{\rho}(k), \qquad  \overline{\rho}(k) \leq 
\overline{\rho}(0)   \underline{\rho}(k).$$
iii) We have 
$$\underline{\rho}(k_1+\cdots+k_n)
\leq 
\underline{\rho}(k_1)\cdots \underline{\rho}(k_n). 
$$
iv) We have $$
p^{\frac{1}{p-1}-\frac{k}{e(q-1)}} \;\leq\; \underline{\rho}(k) \leq  1. 
$$
\end{proposition}
\begin{proof}
i) is clear. For ii), first we have $\overline{\rho}(k) \geq 
|k!/\varpi_p^k |\geq \underline{\rho}(k)$. 
Suppose $\overline{\rho}(k)=| k_1!/\varpi_p^{k_1}|$ and $\underline{\rho}(k)
=|k_2!/\varpi_p^{k_2}|$. Then $k_1 \geq k \geq k_2$ and 
$$\left|\frac{k_1!}{\varpi_p^{k_1}}/\frac{k_2!}{\varpi_p^{k_2}}\right|
=\left|\binom{k_1}{k_2} \frac{(k_1-k_2)!}{\varpi_p^{k_1-k_2}}\right| \leq \overline{\rho}(0).$$
For iii), suppose that 
$\underline{\rho}(k_i)
=| l_i!/\varpi_p^{l_i}| $ for $l_i \leq k_i$. Then the assertion for $\underline{\rho}$ follows from 
$$\underline{\rho}(k_1+\dots+k_n) \leq \left|
\frac{(l_1+\dots +l_n)!}{\varpi_p^{l_1+\dots +l_n}}\right| 
\leq \left|\frac{(l_1+\dots +l_n)!}{l_1! \cdots l_n!}\right|
\left|
\frac{l_1!}{\varpi_p^{l_1}}\right| \cdots 
\left|
\frac{l_n!}{\varpi_p^{l_n}}\right|.
$$

For iv), suppose that $\underline{\rho}(k)
= | l!/\varpi_p^{l}| $ for $l \leq k$. Then 
$$
p^{\frac{1}{p-1}-\frac{k}{e(q-1)}} \;\leq\;
p^{\frac{1}{p-1}-\frac{l}{e(q-1)}} \;\leq\;
\left| \frac{l!}{\varpi_p^l}\right|=
 \underline{\rho}(k). $$
\end{proof}

If $e \leq p -1$, then we can determine $\overline{\rho}(k)$ and 
$\underline{\rho}(k)$ explicitly. 

\begin{lemma}\label{lemma; binom}
Let $k$ be a non-negative integer and let $q$ be a power of $p$. \\
i) For any integer   $0 \leq r <q$, we have 
$\binom{kq+r}{r}  \equiv 1 \mod p.$ \\
ii) We have $\binom{k}{q}  \in [k/q]\bbZ_p$. 
\end{lemma}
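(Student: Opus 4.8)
The plan is to treat the two parts separately, each by an elementary manipulation of the explicit product defining the binomial coefficient together with a $p$-adic valuation count; throughout write $q = p^f$ and denote by $v_p$ the normalized valuation.

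For i), I would expand the binomial coefficient as the product
$$\binom{kq+r}{r} = \prod_{j=1}^{r} \frac{kq+j}{j}$$
and examine each factor $p$-adically. Since $1 \le j \le r < q = p^f$, we have $a := v_p(j) \le f-1$, and because $v_p(kq) = f > a$ we get $v_p(kq+j) = a = v_p(j)$; hence each factor is a $p$-adic unit. Writing $j = p^a u$ with $u \in \bbZ_p^{\times}$, the factor becomes $(kp^{f-a}+u)/u$, and as $f - a \ge 1$ the numerator is $\equiv u \pmod p$, so each factor is $\equiv 1 \pmod p$. Multiplying over $j$ shows that the integer $\binom{kq+r}{r}$ is a $p$-adic unit congruent to $1$. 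Equivalently, this is immediate from Lucas' theorem, since the base-$p$ digits of $r$ all sit below position $f$ and therefore coincide with the corresponding digits of $kq+r$.

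For ii), write $k = aq + r$ with $a = [k/q]$ and $0 \le r < q$; the claim is $\binom{k}{q} \in a\bbZ_p$ (the case $a=0$ being trivial, as then $k<q$ and $\binom{k}{q}=0$). For $a \ge 1$ I would apply the Vandermonde convolution
$$\binom{aq+r}{q} = \sum_{j=0}^{r} \binom{aq}{q-j}\binom{r}{j},$$
where the sum truncates at $j = r$ because $r < q$, and show that every summand already lies in $a\bbZ_p$. For the term $j=0$ one uses $\binom{aq}{q} = a\binom{aq-1}{q-1}$, which is visibly divisible by $a$. For $1 \le j \le r$, set $m = q - j$, so $1 \le m \le q-1$ and therefore $v_p(m) \le f-1$; from $\binom{aq}{m} = \frac{aq}{m}\binom{aq-1}{m-1}$ one gets $v_p\binom{aq}{m} \ge v_p(a) + f - v_p(m) \ge v_p(a) + 1$, so this term is even more divisible by $a$. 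Since each summand lies in $a\bbZ_p$, the ultrametric inequality gives $\binom{k}{q} \in a\bbZ_p = [k/q]\bbZ_p$.

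The routine part is i); the step I expect to require the most care is ii), where the point is not merely that $\binom{k}{q}$ is divisible by some power of $p$ (which Lucas mod $p$ would already give) but that it is divisible by the exact integer $[k/q]$. The mechanism is that the prefactor $\frac{aq}{q-j}$ in each Vandermonde term supplies precisely $v_p(a)$, with an extra factor of $p$ when $j \ge 1$ because then $m = q-j < q$ cannot absorb the full $p^f$. I would therefore be careful to use the truncation $j \le r < q$, which guarantees $m = q-j$ never reaches $q$ for $j \ge 1$, so that the valuation bound $v_p(m) \le f-1$ is available in exactly the terms where it is needed.
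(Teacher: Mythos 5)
Your proof is correct, and for part ii) it takes a genuinely different route from the paper's. (Part i) the paper dismisses as ``clear''; your factor-by-factor unit computation, or equivalently Lucas' theorem, is a fine way to make it explicit.) For ii), the paper argues inside the polynomial ring: writing $(1+x)^q = 1 + x^q + pf(x)$ with $f$ integral, it expands
$(1+x)^k = \bigl(1+x^q+pf(x)\bigr)^a(1+x)^r \equiv (1+x^q)^a(1+x)^r \pmod{ap\,\bbZ_p[x]}$,
the cross terms being killed because $\binom{a}{j}p^j \in ap\,\bbZ_p$ for $j\ge 1$; since $r<q$, the coefficient of $x^q$ on the right is exactly $a$, whence $\binom{k}{q} \in a + ap\,\bbZ_p$. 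Your argument replaces this single congruence by the Vandermonde convolution $\binom{aq+r}{q} = \sum_{j=0}^{r}\binom{aq}{q-j}\binom{r}{j}$ together with the absorption identity $\binom{aq}{m} = \frac{aq}{m}\binom{aq-1}{m-1}$ and the valuation bound $v_p(m) \le f-1$ for $1 \le m \le q-1$. The two are close cousins --- your Vandermonde expansion is precisely the coefficient extraction from $(1+x)^{aq}(1+x)^r$, and both proofs ultimately pull the factor $a$ out of a binomial coefficient via the same absorption mechanism --- but your term-by-term valuation bookkeeping substitutes for the paper's Frobenius-type congruence $(1+x)^q \equiv 1+x^q \bmod p$ raised to the $a$-th power. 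The paper's route is more compact and yields slightly more, namely $\binom{k}{q} \equiv a \pmod{ap\,\bbZ_p}$ rather than mere membership in $a\bbZ_p$; your route makes completely transparent which summand carries the factor $a$ (the $j=0$ term, via $\binom{aq}{q}=a\binom{aq-1}{q-1}$) and why all the others are divisible by $pa$ --- and indeed, combined with your part i) applied to $\binom{aq-1}{q-1}$, it would recover the paper's sharper congruence as well.
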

\begin{proof}
i) is clear. For ii), we write $k=aq+r$ with $0 \leq r<q$. 
We put $(1+x)^q=1+x^q+pf(x)$ for some integral polynomial $f(x)$. 
Then 
$$(1+x)^{k}=
(1+x^q+pf(x))^{a}(1+x)^r \equiv  (1+x^q)^{a}(1+x)^r \mod ap \bbZ_p[x].$$
Hence the coefficient of $x^q$ in the above is in $a \bbZ_p$. 
\end{proof}

\begin{proposition}\label{proposition; factorial/varpi}
 Let $i$, $e$ and $h$ be natural numbers. We put $q=p^h$. 
Then we have 
$$v_p(i!) \geq \frac{i}{p-1}-\frac{i}{e(q-1)}-h+\frac{1}{e}
+\left[\frac{i}{q}\right]\left(\frac{1}{e}-\frac{1}{p-1}+\frac{1}{e(q-1)}\right)
+v_p\left(\left[\frac{i}{q}\right]! \right)
.$$
In the above, the equality holds if and only if $i \equiv -1 \mod q$.
In particular, if $e \leq p-1$ or $i<q$, we have 
$$v_p(i!) \geq \frac{i}{p-1}-\frac{i}{e(q-1)}-h+\frac{1}{e}$$ 
and the equality holds if and only if $i=q-1$, and we have 
$\overline{\rho}(0)=|\pi/q|$. 
\end{proposition}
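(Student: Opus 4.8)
The plan is to estimate the $p$-adic valuation $v_p(i!)$ using Legendre's formula, $v_p(i!) = \frac{i - s_p(i)}{p-1}$, where $s_p(i)$ is the sum of the base-$p$ digits of $i$. The main idea is to separate the contribution coming from the "top" of the base-$p$ expansion (the digits beyond position $h$, which encode $[i/q]$) from the "bottom" $h$ digits (which encode $i \bmod q$). The target inequality has the recursive shape $v_p(i!) \geq (\text{linear terms in } i) + v_p([i/q]!)$, so I expect the proof to reduce the claim about $i!$ to the same claim about the smaller number $[i/q]!$, making this essentially an exercise in bookkeeping of digit sums.

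\medskip

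First I would write $i = q\,[i/q] + r$ with $0 \le r < q$, so that $s_p(i) = s_p([i/q]) + s_p(r)$ since $q = p^h$ simply shifts the digits of $[i/q]$ up by $h$ places with $r$ occupying the bottom $h$ places. Applying Legendre's formula to both $i!$ and $[i/q]!$, the desired inequality becomes a statement purely about digit sums: I would rewrite everything so that $v_p(i!) - v_p([i/q]!)$ is expressed via $s_p([i/q])$, $s_p(r)$, and the linear terms in $i$. The bound $v_p(i!) \geq \cdots$ should then follow from an elementary estimate $s_p(r) \le (q-1)\cdot\frac{?}{?}$ type inequality controlling the bottom digits; concretely one needs $s_p(r) \le h(p-1)$, which is sharp precisely when all $h$ bottom digits equal $p-1$, i.e. when $r = q-1$, equivalently $i \equiv -1 \bmod q$. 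Tracking the equality case carefully through Legendre's formula is what pins down the "if and only if" clause.

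\medskip

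For the special case $e \le p-1$ or $i < q$, I would observe that the bracketed coefficient $\frac{1}{e} - \frac{1}{p-1} + \frac{1}{e(q-1)}$ and the term $v_p([i/q]!)$ conspire to be nonnegative (or vanish when $i<q$, since then $[i/q]=0$), so dropping them only weakens the inequality and yields the cleaner bound $v_p(i!) \ge \frac{i}{p-1} - \frac{i}{e(q-1)} - h + \frac{1}{e}$. The condition $e \le p-1$ is exactly what forces $\frac{1}{e} \ge \frac{1}{p-1}$, guaranteeing nonnegativity of the discarded coefficient; here the equality case collapses to $i = q-1$. Finally, to read off $|\overline{\gamma}(0)| = |\pi/q|$, I would recall $\overline{\gamma}(0)$ is realized by the $m$ maximizing $|m!/\varpi_p^m|$, and using $|\varpi_p| = p^{-s}$ with $s = \frac{1}{p-1} - \frac{1}{e(q-1)}$, the bound on $v_p(m!)$ shows the maximum is attained at $m = q-1$; a short computation then gives $|({q-1})!/\varpi_p^{q-1}| = p^{h - 1/e} = |\pi/q|$ (using $v_p(\pi) = 1/e$ and $v_p(q) = h$).

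\medskip

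The main obstacle will be the equality-case analysis: the raw inequality $v_p(i!) \ge \cdots$ is a routine consequence of Legendre's formula, but establishing that equality holds \emph{exactly} when $i \equiv -1 \bmod q$ requires tracking which digit-sum estimate is tight and verifying that the recursion into $[i/q]!$ does not introduce additional slack. In particular I must confirm that the inequality relating $s_p(i)$ to the linear terms is saturated only under the stated congruence, and that the passage to the special case correctly inherits the sharper equality condition $i = q-1$ rather than the weaker $i \equiv -1 \bmod q$.
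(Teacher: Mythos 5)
Your overall route --- Legendre's formula plus base-$p$ digit bookkeeping --- is genuinely different from the paper's proof, which establishes the case $i<q$ by induction on $h$ and then handles $i\ge q$ by the recursion $i\mapsto i-q$ together with the binomial lemma $\binom{k}{q}\in[k/q]\bbZ_p$. Your reduction is sound: writing $i=aq+r$ with $a=[i/q]$ and $0\le r<q$, so that $s_p(i)=s_p(a)+s_p(r)$, Legendre's formula turns the asserted inequality into the \emph{equivalent} statement
$$\frac{s_p(r)}{p-1}+\frac{q-1-r}{e(q-1)}\;\le\; h.$$
But here is the gap: the estimate you declare to be what is concretely needed, namely $s_p(r)\le h(p-1)$, is strictly weaker than this and does not close the argument. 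It only gives $h-\frac{s_p(r)}{p-1}\ge 0$, whereas the reduced inequality requires $h-\frac{s_p(r)}{p-1}\ge\frac{q-1-r}{e(q-1)}$, and that right-hand side is strictly positive whenever $r<q-1$. The missing idea is to compare digit by digit: writing $r=\sum_{j=0}^{h-1}r_jp^j$ and $c_j=p-1-r_j$, the reduced inequality reads $\sum_{j=0}^{h-1}c_j\bigl(\frac{1}{p-1}-\frac{p^j}{e(q-1)}\bigr)\ge0$, and one must check that each coefficient is nonnegative, i.e. $(p-1)p^j\le e(q-1)$ for $0\le j\le h-1$, which holds because $(p-1)p^{h-1}=p^h-p^{h-1}\le q-1$. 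With that observation both the inequality and the equality analysis fall out, since equality forces $c_j=0$ for every $j$ whose coefficient is nonzero.

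Once repaired, your proof is correct (your treatment of the case $e\le p-1$, of the collapse of the equality case to $i=q-1$, and of $|\overline{\gamma}(0)|=|\pi/q|$ is fine), and it is arguably sharper than the paper's, because the reduction is an exact equivalence rather than a chain of estimates threaded through an induction. In particular it exposes a blemish in the statement that the paper's own proof glosses over: when $e=h=1$ (so $K=\bbQ_p$) the single coefficient $\frac{1}{p-1}-\frac{1}{e(q-1)}$ vanishes, the displayed inequality becomes the identity $v_p(i!)=[i/p]+v_p\left(\left[i/p\right]!\right)$ for \emph{every} $i$, and the ``only if'' part of the equality claim is false; the same failure occurs in the base case of the paper's induction, where for $e=1$ the right-hand side is identically zero on $0\le i<p$ rather than maximized only at $i=p-1$. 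For $e>1$ or $h>1$ (the only situation invoked later, in the proposition determining $\underline{\gamma}(n)$ and $\overline{\gamma}(n)$), your criterion gives exactly the stated ``if and only if''.
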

\begin{proof}
First, we assume  that $i<q$. We prove the inequality by induction on $h$. 
If $h=1$, then $i<p$. Hence the left hand side is equal to zero, namely  
 $v_p(i!)=0$, and  the right hand side take the maximum value when $i=p-1$, which 
is also equal to zero.  We assume that the inequality holds for natural numbers less than $h$. 
Since the right hand side is strictly increasing for $i$, and $v_p(i!)$  strictly increase 
only when $p$ divides $i$, we may assume that $i$ is of the form $i=kp-1$ 
for some natural number $k \leq p^{h-1}$. We have 
$$v_p(i!)=v_p((kp)!)-v_p(kp)=k-1+v_p((k-1)!).$$ On the other hand, we have 
\begin{align*}
&\frac{i}{p-1}-\frac{i}{e(q-1)}-h+\frac{1}{e}\\
&=(k-1)+\frac{k-1}{p-1}-\frac{k-1}{e(p^{h-1}-1)}-(h-1)+\frac{1}{e}+\frac{k-1}{e(p^{h-1}-1)}-\frac{kp-1}{e(q-1)}\\
& \leq k-1+v_p((k-1)!).
\end{align*}
In the last inequality, we used the inductive hypothesis and $k \leq p^{h-1}$. 
Hence we have the desire inequality and the equality holds only when $k=p^{h-1}$, namely, 
$i=q-1$. 
For $i  \geq q$, by Lemma \ref{lemma; binom} ii) and  induction, we have 
\begin{align*}
&v_p(i!) \geq v_p((i-q)!)+v_p(q!)+v_p\left(\left[\frac{i}{q}\right] \right) \\
&\geq \frac{i}{p-1}-\frac{i}{e(q-1)}-h+\frac{1}{e}+\left[\frac{i}{q}\right]\left(\frac{1}{e}-\frac{1}{p-1}+\frac{1}{e(q-1)}\right)+v_p\left(\left[\frac{i}{q}\right]! \right). 
\end{align*} 
From the above argument and the induction,  to have the equality, $i$ must be congruent to $-1$. 
On the other hand, if $i \equiv -1 \mod q$, then   direct calculations give the equality.  
\end{proof}

\begin{proposition}\label{proposition; gamma(n)}
Suppose that 
 $e \leq p-1$, and that $e >1$ or $h>1$. \\
i) We have $|n!/ \varpi_p^n| > 1$ for $0<n<q$. \\
ii) For any non-negative integer $n$, $\underline{\rho}(n)=|n_0!/\varpi_p^{n_0}|$ 
where $n_0=[n/q]q$. \\
iii) For $n \equiv -1 \mod q$ and a natural number $i \not=q$, we have  
$$\left|\frac{n!}{ \varpi_p^n}\right|>\left|\frac{(n+q)!}{ \varpi_p^{n+q}}\right|
>\left|\frac{(n+i)!}{ \varpi_p^{n+i}}\right|$$
In particular, for any non-negative integer $n$, we have $\overline{\rho}(n)=|n_1!/\varpi_p^{n_1}|$ 
where $n_1=[n/q]q+q-1$. 
\end{proposition}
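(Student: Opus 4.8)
The plan is to reduce all three statements to the analysis of a single function. Writing $v_p$ for the valuation with $v_p(p)=1$, we have $v_p(\varpi_p)=s:=\frac{1}{p-1}-\frac{1}{e(q-1)}$, so that $|m!/\varpi_p^m|=p^{-g(m)}$ with $g(m):=v_p(m!)-ms$. Then $|\underline{\gamma}(n)|=\min_{0\le m\le n}|m!/\varpi_p^m|$ corresponds to locating $\max_{0\le m\le n}g(m)$, and $|\overline{\gamma}(n)|$ to locating $\min_{m\ge n}g(m)$. Legendre's formula $v_p(m!)=\frac{m-S_p(m)}{p-1}$, with $S_p(m)$ the base-$p$ digit sum, gives the transparent expression $g(m)=-\frac{S_p(m)}{p-1}+\frac{m}{e(q-1)}$, which is what I would compute with throughout.

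The main tool I would set up is an additivity property at the scale $q=p^h$: writing $m=qm'+r$ with $0\le r<q$, the base-$p$ digits of $qm'$ and of $r$ occupy disjoint positions, so $S_p(m)=S_p(m')+S_p(r)$ and hence $g(qm'+r)=g(qm')+g(r)$. Thus on each block $\{qm',\dots,qm'+q-1\}$ the function $g$ is the restriction of $g$ to $\{0,\dots,q-1\}$ shifted by the constant $g(qm')$, and the whole problem splits into (a) the behaviour of $g$ on one block and (b) the behaviour of $g$ along the multiples of $q$. For (b) I would compute $g(q(j+1))-g(qj)=-\frac{1}{p-1}+t'+\frac{q}{e(q-1)}$, where $t'\ge 0$ is the number of trailing $(p-1)$-digits of $j$; since $q(p-1)>e(q-1)$ whenever $e\le p-1$, this is strictly positive, so $g(qj)$ is strictly increasing in $j$.

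For (a), part i) is the key input: for $0<n<q$ I would use $n=\sum_{j=0}^{h-1}d_jp^j\le p^{h-1}S_p(n)$ to get $g(n)\le S_p(n)\bigl(\frac{p^{h-1}}{e(q-1)}-\frac{1}{p-1}\bigr)$, and the bracket is negative exactly because $p^{h-1}(p-1)<e(q-1)$, which is forced by $e\le p-1$ together with $e>1$ or $h>1$; as $S_p(n)\ge 1$ this yields $g(n)<0$, i.e.\ $|n!/\varpi_p^n|>1$. In particular $\max_{0\le r<q}g(r)=g(0)=0$, attained only at $r=0$. Dually, Proposition \ref{proposition; factorial/varpi}, in the form $v_p(r!)\ge rs-h+\frac1e$ with equality iff $r=q-1$, says precisely that $\min_{0\le r<q}g(r)=g(q-1)$, attained only at $r=q-1$. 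Assembling: for ii), decomposing $\{0,\dots,n\}$ into full blocks and a trailing partial block and using $\max_r g(r)=g(0)$ reduces $\max_{0\le m\le n}g(m)$ to $\max_{0\le j\le[n/q]}g(qj)=g([n/q]q)=g(n_0)$. For iii), the same decomposition with $\min_r g(r)=g(q-1)$ and the increasingness of $g(qj)$ gives $\min_{m\ge n}g(m)=g([n/q]q+q-1)=g(n_1)$; when $n\equiv-1\bmod q$ the uniqueness of the in-block argmin and the strict monotonicity of $g(qj)$ pin the two smallest values over $\{m\ge n\}$ at $m=n$ and $m=n+q$, which is the displayed chain.

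The hard part will be part i). Every other assertion is a formal consequence of the additivity $g(qm'+r)=g(qm')+g(r)$ and the single monotonicity computation, but part i) concerns one block and so lies outside the reach of the additivity; it is exactly here that both halves of the hypothesis on $(e,h)$ must be invoked, through the sharp inequality $p^{h-1}(p-1)<e(q-1)$, and verifying that this inequality is strict in the boundary cases $h=1$ and $e=1$ is the only genuinely delicate point.
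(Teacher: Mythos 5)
Your proof is correct, but it takes a genuinely different route from the paper's. The paper stays entirely within the factorial/binomial calculus: part i) is proved by induction on $h$, writing $n=kp+r$ and removing the unit $\binom{n}{r}$ via Lemma \ref{lemma; binom} i); parts ii) and iii) are proved by expressing each relevant quotient of terms $m!/\varpi_p^m$ as a binomial coefficient times elementary factors, using $|\overline{\gamma}(0)|=|\pi/q|$ and Proposition \ref{proposition; factorial/varpi}. You instead linearize through Legendre's formula: with $s=v_p(\varpi_p)$ and $S_p(m)$ the base-$p$ digit sum, $g(m)=v_p(m!)-ms=-\frac{S_p(m)}{p-1}+\frac{m}{e(q-1)}$, and everything follows from two structural facts the paper never isolates, namely the carry-free additivity $g(qm'+r)=g(qm')+g(r)$ for $0\le r<q$ (digit disjointness), and the strict increase of $j\mapsto g(qj)$, which needs only $e(q-1)<q(p-1)$. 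This buys you: (a) a closed-form, induction-free proof of i), $g(n)\le S_p(n)\bigl(\frac{p^{h-1}}{e(q-1)}-\frac{1}{p-1}\bigr)<0$, which displays exactly where the hypothesis that $e>1$ or $h>1$ enters (it is equivalent to strictness of $p^{h-1}(p-1)<e(q-1)$); by contrast the paper's induction on $h$ is more delicate, since its base case with $e=1$, $h=1$ yields only equality, so the induction in fact carries the non-strict bound and recovers strictness from the strict inequality in the inductive step; and (b) a uniform block-by-block bookkeeping proof of ii) and iii), in which the displayed chain of iii) (including the implicit exclusion $i\ne q$) falls out of the uniqueness of the in-block argmin together with the monotonicity along multiples of $q$. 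What the paper's route buys in exchange is that it avoids the digit-sum formalism entirely and reuses machinery it needs elsewhere anyway (Lemma \ref{lemma; binom}). Note that both arguments lean on Proposition \ref{proposition; factorial/varpi} restricted to $i<q$ to identify $r=q-1$ as the unique in-block minimizer of $g$; since that proposition precedes this one in the paper, your citation of it is legitimate and introduces no circularity.
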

\begin{proof} 
We prove i) by induction for $h$ of $q=p^h$. 
If $h=1$, then $n!$ is a $p$-adic unit and the assertion is clear. 
Assume that $h>1$. We write as $n=kp+r$ with $0 \leq r <p$. 
Then 
$$\frac{n!}{\varpi_{p}^n}
=\binom{n}{r}\frac{(kp)!}{\varpi_p^{kp}}\frac{r!}{\varpi_p^r}.$$
Hence by Lemma \ref{lemma; binom} i) and the induction for $n$, 
we may assume that $r=0$ and $k \geq 1$. 
Then 
$$v_p\left(\frac{(kp)!}{\varpi_p^{kp}}\right)=v_p((kp)!)-\frac{kp}{p-1}+\frac{kp}{e(q-1)}
< v_p(k!)-\frac{k}{p-1}+\frac{k}{e(p^{h-1}-1)}. 
$$
By the inductive hypothesis for $h$, the right hand side is negative or $0$. 

Next we prove ii).  Suppose that 
$m <n_0$. 
Then 
$$\left|\frac{n_0!}{\varpi^{n_0}}/
\frac{m!}{\varpi_p^m}\right|
=\left|
\frac{n_0}{\varpi_p}\binom{n_0-1}{m}\frac{(n_0-m-1)!}{\varpi_p^{n_0-m-1}} 
\right| \leq \left| \frac{n_0}{\varpi_p} \right| \overline{\rho}(0)
= \left| \frac{n_0 \pi}{q\varpi_p} \right|<1.$$
Suppose that $n \geq m> n_0$. 
We write as $m=[n/q]q+r$ with $0 < r <q$. Then  i) and 
Lemma \ref{lemma; binom} i)  show that 
$$\left|\frac{n_0!}{\varpi_p^{n_0}}/
\frac{m!}{\varpi_p^m}\right|
=\left|\binom{m}{r}^{-1} \frac{\varpi_p^r}{r!}\right|<1.$$

Finally, we show iii). Let $n$ be such that $n \equiv -1 \mod q$. 
We have  
$$\frac{(n+i)!}{ \varpi_p^{n+i}}/\frac{(n+q)!}{ \varpi_p^{n+q}}
=
\frac{(n+i)!}{(n+q)!}\varpi_p^{q-i}
=u  \frac{q}{\pi} \frac{(i-1)!}{\varpi_p^{i-1}} \frac{\pi \varpi_p^{q-1}}{ q!}
$$
where $u=\binom{n+q}{q-1}^{-1}\binom{n+i}{i-1}$ is a $p$-adic integer 
by Lemma \ref{lemma; binom} i). 
By Proposition \ref{proposition; factorial/varpi}, the $p$-adic 
(additive) valuation of 
the right hand side is positive. 
Since $v_p(\pi/\varpi_p)>0$, the $p$-adic (additive) valuation of 
$$\frac{(n+q)!}{ \varpi_p^{n+q}}/\frac{n!}{ \varpi_p^{n}}
=\binom{n+q}{q}\frac{q!}{ \pi \varpi_p^{q-1}}\frac{\pi}{\varpi_p}$$
is positive. 
\end{proof}

Next we investigate the absolute values of the coefficients of  
a power of the logarithm and the exponential map of 
the Lubin-Tate group. The case $k=1$ in the proposition 
 below is obtained in \cite{IS}. 

\begin{proposition}\label{lemma; lambda exp coefficients} 
We put $\partial={d}/{dt}$. Then we have 
$$
\left| \frac{\varpi_p^k \partial^n \lambda(t)^k}{k!n!} |_{t=0} \right| 
\leq  \underline{\rho}[k,n]^{-1}, 
\qquad \left| \partial^n \mathrm{exp}^k_{\mathcal{G}}(t) |_{t=0}\right |
 \leq | \varpi_p^n| \overline{\rho}[k,n].$$
\end{proposition}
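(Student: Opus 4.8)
The plan is to reduce both inequalities to statements about the coefficients of one explicit power series and then to feed in the integrality built into the $p$-adic period. Throughout I would use that $\phi(t):=\exp(\varpi_p\lambda(t))\in\cO_{\bbC_p}[[t]]$ is integral, that consequently $\varpi_p\lambda'(t)=\phi'(t)/\phi(t)$ is integral as well, and that $\phi(\exp_{\mathcal G}(t))=\exp(\varpi_p t)$. For the logarithm I would first record the identity
$$\frac{\varpi_p^k}{k!\,n!}\,\partial^n\lambda(t)^k\big|_{t=0}=[X^k]\,P_n(\varpi_p X),$$
which is immediate from $\exp(X\lambda(t))=\sum_n P_n(X)t^n$. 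Thus the first inequality is exactly the assertion that the coefficients of the polynomial $g_n(X):=P_n(\varpi_p X)$ are bounded by $|\underline{\gamma}[k,n]|^{-1}=\max_{k\le m\le n}|\varpi_p^m/m!|$.

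The crucial arithmetic input is that $g_n$ is bounded by $1$ on all of $\cO_K$: for $X\in\cO_K$ the Lubin--Tate relation $\lambda([X]t)=X\lambda(t)$ gives $\exp(\varpi_p X\lambda(t))=\phi([X]t)\in\cO_{\bbC_p}[[t]]$, whence $|P_n(\varpi_p X)|\le 1$. I would then extract the individual coefficients of $g_n$ by a downward induction on $k$: the top coefficient is $\varpi_p^n/n!$, which already saturates the bound at $m=n$, and lower coefficients are obtained by peeling off the known higher terms and reading off the sharp spacing of $\cO_K$, measured precisely by the quantities $|m!/\varpi_p^m|$. The valuations produced this way are controlled by Proposition \ref{proposition; factorial/varpi}, Lemma \ref{lemma; binom}, and the monotonicity of $\underline{\gamma}$; matching them against the range-maximum defining $|\underline{\gamma}[k,n]|^{-1}$ yields the claim. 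The case $k=1$, i.e.\ the sharp valuation of the coefficients of $\lambda$ itself, is exactly the estimate of \cite{IS} and serves as the base of the arithmetic.

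For the exponential I would exploit that $\exp_{\mathcal G}$ is the compositional inverse of $\lambda$. Writing $[t^n]\exp_{\mathcal G}(t)^k$ through Lagrange inversion as $\tfrac{k}{n}[u^{n-k}]\bigl(u/\lambda(u)\bigr)^n$, the coefficients become universal polynomials in the coefficients of $\lambda$; applying the logarithm bound to each factor and reassembling by means of the submultiplicative inequalities $|\overline{\gamma}(k_1+\cdots+k_r)|\le|\overline{\gamma}(0)|\,|\underline{\gamma}(k_1)|\cdots|\underline{\gamma}(k_r)|$ and $|\underline{\gamma}(k)|\le|\overline{\gamma}(k)|$ recorded above produces the factor $|\varpi_p^n\,\overline{\gamma}[k,n]|$. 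The passage from $\underline{\gamma}$ to $\overline{\gamma}$, that is from minima to maxima over the range $[k,n]$, is exactly what inversion forces, and explains the asymmetry between the two inequalities.

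The main obstacle is the factorial normalization. A direct attack by convolution, $\lambda^k=\lambda\cdot\lambda^{k-1}$, or by the differential recursion $(\varpi_p^k\lambda^k/k!)'=(\varpi_p\lambda')(\varpi_p^{k-1}\lambda^{k-1}/(k-1)!)$, only yields a weaker bound carrying a spurious factor $|k!|^{-1}$ (respectively one factor $|n|^{-1}$ per integration), as one already checks in small unramified examples; no term-by-term estimate gives the clean bound. The sharp inequality genuinely requires the cancellation encoded in the integrality of $\exp(\varpi_p X\lambda(t))$ over $\cO_K$. Consequently the technical heart is the coefficient-extraction step for the logarithm, namely identifying the denominators forced by the values of $g_n$ on $\cO_K$ with the sharp quantities $|m!/\varpi_p^m|$ supplied by Proposition \ref{proposition; factorial/varpi}.
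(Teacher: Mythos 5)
Your reduction of the first inequality to coefficient bounds for $g_n(X)=P_n(\varpi_p X)$ is correct, and the integrality of the values $P_n(\varpi_p x)$ for $x\in\cO_K$ is true (the paper records it right after defining $P_n$). But the step carrying all the weight --- ``lower coefficients are obtained by peeling off the known higher terms and reading off the sharp spacing of $\cO_K$, measured precisely by the quantities $|m!/\varpi_p^m|$'' --- is an assertion, not an argument, and as described it fails. If you subtract the leading term $c_nX^n$ (with $c_n=\varpi_p^n/n!$, whose absolute value is typically $>1$) from $g_n$, the remainder is no longer bounded by $1$ on $\cO_K$; the sup-norm degrades by $|c_n|$ at each step, so the downward induction compounds these losses instead of staying inside $\max_{k\le m\le n}|\varpi_p^m/m!|$. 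To convert sup-norm bounds on $\cO_K$ into coefficient bounds one needs a divided-difference or orthonormal-basis argument over an optimal (Amice-type) sequence in $\cO_K$, which bounds coefficients by $|\pi|^{-\sum_i[n/q^i]}$, followed by a digit-sum inequality comparing this quantity with $|\varpi_p^n/n!|$. Neither ingredient is supplied by Proposition \ref{proposition; factorial/varpi} or Lemma \ref{lemma; binom} as you claim, and importing Amice's structure theory of polynomials bounded on $\cO_K$ would come close to assuming what the paper later deduces from this very proposition. Your treatment of the exponential has the same character: Lagrange inversion introduces a factor $k/n$ whose denominator you never cancel, and ``reassembling by submultiplicativity'' is not spelled out.

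Moreover, the claim you use to justify this detour --- that the differential recursion ``only yields a weaker bound carrying a spurious factor $|k!|^{-1}$'' and that no term-by-term estimate gives the clean bound --- is false, and that recursion is exactly how the paper proceeds. The paper first reduces, by an integral change of parameter $\phi(s)\in s\cO_K[[s]]^{\times}$ (this coordinate change, not inversion, is what produces the range maxima over $[k,n]$ in both inequalities), to the special Lubin--Tate logarithm $\lambda(t)=\sum_m t^{q^m}/\pi^m$. For that $\lambda$ the identity $\partial^n\lambda^k|_{t=0}=\sum_m\binom{n-1}{q^m-1}\frac{q^m!\,k}{\pi^m}\,\partial^{n-q^m}\lambda^{k-1}|_{t=0}$ together with induction on $k$ gives $\left|\partial^n\lambda^k|_{t=0}\right|\le|k!\,\varpi_p^{n-k}|$ cleanly: the factor $k$ produced by differentiation accumulates across the induction to $k!$ in the numerator, which only strengthens the bound since $|k!|\le 1$, and the sole arithmetic input is $|q^m!/(\pi^m\varpi_p^{q^m-1})|\le 1$. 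The exponential is then handled without Lagrange inversion, by inverting the triangular system $\exp_{\mathcal G}^k(\lambda(t))=t^k$ by induction on $n$, yielding the same bound $|k!\,\varpi_p^{n-k}|$. So the elementary route you dismiss is the one that works, while your alternative leaves its genuinely hard step unproved.
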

\begin{proof}
The case for $n <k$ or $k=0$ is trivial. 
Suppose that $n \geq k \geq 1$. 
We first assume that the formal logarithm of $\mathcal{G}$ is given by 
$$\lambda(t)=\sum_{m=0}^\infty \frac{t^{q^m}}{\pi^m}.$$ 
Then it suffices to show   inequalities  
$$
\left|  \partial^n \lambda(t)^k |_{t=0} \right| \leq |k! \varpi_p^{n-k}|,  
\qquad \left| \partial^n \mathrm{exp}^k_{\mathcal{G}}(t) |_{t=0}\right |
 \leq |k! \varpi_p^{n-k}|.$$

When $k=1$, the inequality for $\lambda(t)$ is proven by direct calculations. 
We prove the general case by induction on $k$.  We have 
\begin{align*}
&\partial^n \lambda(t)^k |_{t=0} 
=k\partial^{n-1}( \lambda(t)^{k-1}\lambda'(t) )|_{t=0}\\
&=k\partial^{n-1} \sum_{m=0}^\infty  \lambda(t)^{k-1}\frac{q^mt^{q^m-1}}{\pi^m} |_{t=0}
=\sum_{m=0}^\infty  \binom{n-1}{q^m-1}\frac{q^m!k}{\pi^m}\partial^{n-q^m}\lambda(t)^{k-1} |_{t=0}.
\end{align*} 
Hence we have $|\partial^n \lambda(t)^k |_{t=0}| \leq  | k! \varpi_p^{n-k}|.$

We put $\mathrm{exp}^k_{\mathcal{G}}(t) =\sum_{n=k}^{\infty} a_n {t^n}$. 
We prove that $|n! a_n | \leq | k! \varpi_p^{n-k}|$ by induction for $n$. 
If $n=k$, this is true since $a_{k}=1$.  
We assume that the assertion is true for  integers less than $n$. 
Since $\mathrm{exp}^k_{\mathcal{G}}(\lambda(t)) =t^k$, we have 
$$t^k=a_k {\lambda(t)^k}+a_{k+1} {\lambda(t)^{k+1}}+ \cdots +a_n {\lambda(t)^n} + \cdots.$$
By i) and the inductive hypothesis, we have 
$$|a_{m} \partial^n {\lambda(t)^{m}}|_{t=0} | \leq | k! \varpi_p^{n-k}|$$
for $m<n$. Since $\partial^n {\lambda(t)^{n}}|_{t=0} =n!$ 
and $\partial^n {\lambda(t)^{m}}|_{t=0} =0$ for $n<m$, the assertion is also true for $n$. 
 
Now we consider a general parameter $s$. 
Then the logarithm  and the exponential for $\mathcal{G}$ with  parameter $s$ are 
of the form $\lambda(\phi(s))$  and 
$\psi(\exp_{\mathcal{G}}(s))$ for some 
$\phi(s), \psi(s) \in s\cO_{K}[[s]]^{\times}$. 
We put $\lambda(t)^k=\sum_{n=k}^\infty c_n^{(k)} t^n$ and 
$\lambda(\phi(s))^k=\sum d_n^{(k)} s^n$. 
Then  we have shown $|c_n^{(k)}| \leq |k! \varpi_p^{n-k}/n!|$. 
Since $d_n^{(k)} $ is a linear sum of $c_l^{(k)} $ $(k \leq l \leq n)$ with integral coefficients, 
we have 
$$\left|
\frac{\varpi_p^k d_n^{(k)} }{k!}
\right| \leq \max_{k\leq l \leq n} \left\{ \left |c_l^{(k)}\frac{\varpi_p^k}{k!} \right|\right\} \leq  \max_{k \leq l \leq n} \left\{ \left| 
\frac{\varpi_p^l}{l!}\right|\right\}= \underline{\rho}[k,n]^{-1}.  
$$
Hence we have the inequality  for the logarithm. The inequality  for the 
exponential is straightforward.  
\end{proof}


\begin{lemma}\label{lemma; coleman}
i) Suppose that $f(t) \in \cO_{K}[[t]]$ satisfies 
$f(t\oplus t_N)=f(t)$ for all $t_N \in \mathcal{G}[\pi^N]$. Then 
there exists 
a power series $g(t) \in \cO_{K}[[t]]$ such that  
$f(t)=g([\pi^N]t).$ \\
ii) There exists an integral power series $g_k(t) \in \cO_{K}[[t]]$ such that 
$$\pi^{-N}\sum_{t_N \in \mathcal{G}[\pi^N] } (t \oplus t_N)^k = g_k([\pi^N]t).$$
\end{lemma}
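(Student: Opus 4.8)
The plan is to handle both parts through the Weierstrass structure of the isogeny $[\pi^N]$ together with a descent along the translation action of $\mathcal{G}[\pi^N]$. For part (i), set $u=[\pi^N]t$. Since $[\pi]t\equiv t^q\bmod\pi$ and $[\pi^N]=[\pi]^{\circ N}$, we have $u\equiv t^{q^N}\bmod\pi$, while $u=\pi^N t+\cdots$; thus $u$ is a distinguished series of Weierstrass degree $d:=q^N$. By the Weierstrass division/preparation theorem, $\cO_K[[t]]$ is free over $\cO_K[[u]]$ with basis $1,t,\dots,t^{d-1}$, so $f$ has a unique expansion $f=\sum_{j=0}^{d-1}g_j(u)\,t^j$ with $g_j\in\cO_K[[u]]$, and the goal becomes $g_j=0$ for $j\ge 1$. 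I first pass to $L=K(\mathcal{G}[\pi^N])$, whose ring of integers $\cO_L$ contains all torsion points, and note that $\mathcal{G}[\pi^N]$ acts on $\cO_L[[t]]$ by $\sigma_\beta\colon t\mapsto t\oplus\beta$, fixing $u$ (because $[\pi^N](t\oplus\beta)=[\pi^N]t\oplus 0=u$) and hence fixing $\cO_L[[u]]$; the hypothesis says $f$ is fixed by every $\sigma_\beta$.

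Then I would run a Vandermonde descent to identify $\cO_L[[t]]^{\mathcal{G}[\pi^N]}$ with $\cO_L[[u]]$. Applying each $\sigma_\beta$ to $f=\sum_j g_j(u)t^j$ and using invariance gives, for every $\beta$, the relation $f=\sum_{j=0}^{d-1}g_j(u)\,\theta_\beta^{\,j}$ with $\theta_\beta:=t\oplus\beta$. These $d$ relations read $M\mathbf{g}=f\mathbf{1}$ for $M=(\theta_\beta^{\,j})_{\beta,j}$ and $\mathbf{g}=(g_0,\dots,g_{d-1})^{T}$. The $\theta_\beta$ have pairwise distinct constant terms $\beta$, so the determinant of $M$ is the Vandermonde product of the nonzero differences $\theta_\beta-\theta_{\beta'}$, a nonzero element of the domain $\cO_L[[t]]$ and hence invertible in its fraction field. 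Since the $j=0$ column of $M$ is $\mathbf{1}$, we have $M\mathbf{e}_0=\mathbf{1}$, so $M(\mathbf{g}-f\mathbf{e}_0)=0$ forces $\mathbf{g}=f\mathbf{e}_0$; that is, $g_j=0$ for $j\ge 1$ and $f=g_0(u)$ with $g_0\in\cO_K[[u]]$ by uniqueness of the $\cO_K$-expansion. Taking $g=g_0$ proves (i).

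For part (ii), put $F_k(t)=\sum_{t_N\in\mathcal{G}[\pi^N]}(t\oplus t_N)^k$. It is invariant under each $\sigma_s$, since $t_N\mapsto s\oplus t_N$ permutes $\mathcal{G}[\pi^N]$, and it is $\mathrm{Gal}(L/K)$-invariant, since the Galois action permutes the torsion points; hence $F_k\in\cO_K[[t]]$ and, by (i), $F_k=\tilde g([\pi^N]t)$ with $\tilde g\in\cO_K[[t]]$. The entire content of (ii) beyond (i) is therefore the extra divisibility $\pi^N\mid\tilde g$, and this is the main obstacle. I would obtain it by induction on $N$ using the summation operator $S_N(f)(t):=\sum_{x\in\mathcal{G}[\pi^N]}f(t\oplus x)$. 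Splitting $\mathcal{G}[\pi^N]$ into cosets of $\mathcal{G}[\pi^{N-1}]$ and using that $[\pi^{N-1}]$ induces an isomorphism $\mathcal{G}[\pi^N]/\mathcal{G}[\pi^{N-1}]\xrightarrow{\sim}\mathcal{G}[\pi]$ yields $S_N(f)=(S_1 h)\circ[\pi^{N-1}]$, where $S_{N-1}(f)=h\circ[\pi^{N-1}]$. By the inductive hypothesis $S_{N-1}(t^k)=\pi^{N-1}G\circ[\pi^{N-1}]$ with $G$ integral, and by $\cO_K$-linearity and continuity it is enough to know the $N=1$ case for a general integral argument, namely $S_1(\cO_K[[v]])\subseteq\pi\,\cO_K[[w]]$ with $w=[\pi]v$; combining these gives $S_N(t^k)=\pi^{N}(\cdots)\circ[\pi^N]$, as required.

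It remains to settle the base case $N=1$, and by linearity and continuity this reduces to the powers $v^k$. By (i), $S_1(v^k)=\tilde h([\pi]v)$ with $\tilde h\in\cO_K[[w]]$, and I must show $\pi\mid\tilde h$. Reducing modulo the maximal ideal of $\cO_L$, every torsion point $s$ reduces to $0$, so $S_1(v^k)\equiv\sum_{s}v^k=q\,v^k\equiv 0$ because $q=0$ in characteristic $p$; as $S_1(v^k)$ already lies in $\cO_K[[v]]$, this gives $S_1(v^k)\equiv 0\bmod\pi$. On the other hand $\tilde h([\pi]v)$ reduces modulo $\pi$ to $\overline{\tilde h}(v^{q})$, and $w\mapsto v^q$ is injective on $\mathbb{F}_q[[w]]$, whence $\overline{\tilde h}=0$, i.e. $\pi\mid\tilde h$. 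The one delicate point to watch throughout is the interplay between reduction modulo $\pi$ in $\cO_K$ and modulo $\mathfrak{m}_L$ in $\cO_L$, which is harmless here precisely because the relevant sums already descend to $\cO_K[[t]]$; this is why I expect the $\pi^N$-divisibility, rather than part (i), to be where the real work lies.
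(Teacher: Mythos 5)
Your argument is correct in substance, but note that the paper does not prove this lemma at all: its entire ``proof'' is a citation to Chapter III of Coleman \cite{Col}, where both statements are established as part of the theory of norm and trace operators on $\cO_{K}[[t]]$ (Coleman's trace operator is precisely $f \mapsto \pi^{-1}\sum_{s \in \mathcal{G}[\pi]} f(t\oplus s)$, and its integrality, iterated, gives part ii)). So what you have written is a genuinely self-contained replacement rather than a variant of the paper's route. The pieces all check out: for (i), $u=[\pi^{N}]t$ reduces to $t^{q^{N}}$ mod $\pi$, so Weierstrass division does make $\cO_{K}[[t]]$ free of rank $q^{N}$ over $\cO_{K}[[u]]$ with basis $1,t,\dots,t^{q^{N}-1}$; the translations $\sigma_{\beta}$ fix $u$; the differences $\theta_{\beta}-\theta_{\beta'}$ have nonzero constant terms $\beta-\beta'$, so the Vandermonde determinant is nonzero in the domain $\cO_{L}[[t]]$ and the descent forces $g_{j}=0$ for $j\geq 1$. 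For (ii), Galois invariance puts the power sums in $\cO_{K}[[t]]$, and the isomorphism $\mathcal{G}[\pi^{N}]/\mathcal{G}[\pi^{N-1}]\cong\mathcal{G}[\pi]$ under $[\pi^{N-1}]$ makes the factorization $S_{N}(f)=(S_{1}h)\circ[\pi^{N-1}]$ and the induction work. What the citation buys is brevity; what your proof buys is independence from Coleman's machinery, using only Weierstrass preparation and reduction modulo the maximal ideal.

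One soft spot: in the base case you reduce ``by linearity and continuity'' to the monomials $v^{k}$. That reduction is both unnecessary and the only place where you would owe a real argument --- to sum the relations $S_{1}(v^{k})=\pi h_{k}([\pi]v)$ against coefficients $a_{k}$ you would need to know that $\sum_{k}a_{k}h_{k}$ converges in $\cO_{K}[[w]]$, i.e.\ that $h_{k}\to 0$ coefficientwise, which is true but not free. Your mod-$\mathfrak{m}_{L}$ argument applies verbatim to an arbitrary $G\in\cO_{K}[[v]]$: every torsion point reduces to $0$, so $S_{1}(G)\equiv qG\equiv 0$ modulo $\mathfrak{m}_{L}$, hence modulo $\pi$ because $S_{1}(G)$ already lies in $\cO_{K}[[v]]$ and $\mathfrak{m}_{L}\cap\cO_{K}=\pi\cO_{K}$; then $\overline{H}(v^{q})=0$ forces $\pi\mid H$. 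Stating the $N=1$ step for general integral input from the start removes the gap and is exactly what your inductive step consumes, since there $S_{1}$ is applied to the integral series $G$, not to a monomial.
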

\begin{proof}
See  \cite{Col}, Chapter III. 
\end{proof}

We put 
$$F(t,X)=\prod_{t_N \in \mathcal{G}[\pi^N]}(1-(t \oplus t_N)X)=
1+\alpha_1(t)X +\cdots + \alpha_{q^N}(t)X^{q^N}.$$ 
For $\partial_X=\partial/\partial X$, we consider the power series 
\begin{equation}\label{equation; generating power sum}
\frac{\pi^{-N}\partial_XF(t,X)}{F(t, X)}=-\sum_{k=0}^\infty 
\left(\pi^{-N}\sum_{t_N \in \mathcal{G}[\pi^N]} (t \oplus t_N)^{k+1} \right)  X^k.
\end{equation}
By Lemma \ref{lemma; coleman} and the above formula, 
we have   $\pi^{-N}\partial_XF(t,X) \in 
\cO_{K}[[t]][X]$. 

\begin{proposition}\label{proposition; power estimate}
Let $k, n$ be non-negative integers and $N$ a natural number. Then 
we have 
\begin{equation}\label{equation; mainest 1}
\left |\pi^{-N}\sum_{t_N \in \mathcal{G}[\pi^N]} \partial^n_{\mathcal{G}}(t \oplus t_N)^{k} |_{t=0}
\right | \leq \left | {\pi^{Nn+k_0(1-\frac{1}{q-1})}\varpi_p^{n}}
 \right|
 \overline{\rho}\left(\left[\frac{k}{q^N}\right]\right)
\overline{\rho}(0)
\end{equation}
where $k_0=\max\{[k/q^N]-n, 0\}$.
We also have 
\begin{equation}\label{equation; mainest 2}
\left |\pi^{-N}\sum_{t_N \in \mathcal{G}[\pi^N]} \partial^n_{\mathcal{G}}(t \oplus t_N)^{k} |_{t=0}
\right | \leq \left | {\pi^{Nn}\varpi_p^{n}}
 \right|
 \overline{\rho}\left[0,n\right].
\end{equation}
Moreover, if $e \leq p-1$, we have 
\begin{equation}\label{equation; mainest 3}
\left|\pi^{-N}\sum_{t_N \in \mathcal{G}[\pi^N]} \partial^n_{\mathcal{G}} (t \oplus t_N)^{k} |_{t=0}
\right| \leq \left| \pi^{Nn}\varpi_p^{n}
\right|\overline{\rho}\left(\left[\frac{k}{q^N}\right]\right).
\end{equation}
\end{proposition}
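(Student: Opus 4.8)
The plan is to collapse the sum to a single evaluation at the origin by passing to the logarithmic coordinate, in which the invariant derivative $\partial_{\mathcal{G}}$ becomes an ordinary derivative, and then to feed in the two ingredients already available: the integrality supplied by Coleman's Lemma \ref{lemma; coleman} and the coefficient bounds of Proposition \ref{lemma; lambda exp coefficients}. First I would invoke Lemma \ref{lemma; coleman} ii) to write $\pi^{-N}\sum_{t_N}(t\oplus t_N)^{k}=g_k([\pi^N]t)$ with $g_k(t)=\sum_l c_l t^l\in\cO_K[[t]]$, so in particular $|c_l|\le 1$. Writing $w=\lambda(t)$, one has $\partial_{\mathcal{G}}=d/dw$ and $[\pi^N]t=\exp_{\mathcal{G}}(\pi^N w)$, so the chain rule turns the inner rescaling $w\mapsto\pi^N w$ into a clean factor $\pi^{Nn}$:
$$\pi^{-N}\sum_{t_N}\partial^n_{\mathcal{G}}(t\oplus t_N)^{k}|_{t=0}=\pi^{Nn}\,\partial^n\bigl(g_k\circ\exp_{\mathcal{G}}\bigr)(0)=\pi^{Nn}\sum_l c_l\,\partial^n\exp^l_{\mathcal{G}}(0).$$
This identity is the backbone of all three estimates and accounts for the uniform factor $\pi^{Nn}$ appearing on every right-hand side.

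For (\ref{equation; mainest 2}) I would then simply combine $|c_l|\le 1$ with the bound $|\partial^n\exp^l_{\mathcal{G}}(0)|\le|\varpi_p^n\overline{\gamma}[l,n]|$ of Proposition \ref{lemma; lambda exp coefficients}, observing that $|\overline{\gamma}[l,n]|$ is nonincreasing in $l$ (and vanishes for $l>n$), so every term is dominated by $|\varpi_p^n\overline{\gamma}[0,n]|$. Multiplying by $\pi^{Nn}$ gives (\ref{equation; mainest 2}) immediately.

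The estimates (\ref{equation; mainest 1}) and (\ref{equation; mainest 3}) require genuinely more, because the gain cannot come from $g_k$ having high $t$-valuation: its constant term $\pi^{-N}\sum_{t_N}t_N^k$ need not vanish, so the improvement must instead come from the $p$-adic \emph{sizes} of the coefficients $c_l$ for small $l$. Here I would use Newton's method. The quantities $\{t\oplus t_N\}$ are precisely the roots entering $F(t,X)=\sum_i\alpha_i(t)X^i$, and the Coleman integrality $\pi^{-N}\partial_X F\in\cO_K[[t]][X]$ says $i\,\alpha_i(t)\in\pi^N\cO_K[[t]]$. Feeding these elementary symmetric functions into Newton's identities for the power sums $\sum_{t_N}(t\oplus t_N)^k$ and solving recursively, the additional divisibility by $\pi^N$ accumulates once $k$ passes successive multiples of $q^N=\#\mathcal{G}[\pi^N]$; after extracting $c_l=\pi^{-Nl}(\cdots)$ this forces $|c_l|$ to decay once $l<[k/q^N]$, the decay being of the shape $\pi^{k_0}\varpi_p^{k_0}/k_0!$ with $k_0=\max\{[k/q^N]-n,0\}$. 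Matching this decay against $|\overline{\gamma}[l,n]|$ and invoking Proposition \ref{proposition; factorial/varpi} produces the index $[k/q^N]$ together with the factor $\pi^{Nn+k_0}\varpi_p^{n+k_0}\overline{\gamma}([k/q^N])\overline{\gamma}(0)/k_0!$ of (\ref{equation; mainest 1}). When $e\le p-1$ I would finally apply Proposition \ref{proposition; gamma(n)} to identify each $|\overline{\gamma}(\cdot)|$ with a single $|m!/\varpi_p^m|$; this absorbs the $\overline{\gamma}(0)$ factor and the $k_0$-correction and collapses the bound to the clean form $|\pi^{Nn}\varpi_p^n\overline{\gamma}([k/q^N])|$ of (\ref{equation; mainest 3}).

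The main obstacle is exactly this Newton's-method valuation estimate for $c_l$ with $l<[k/q^N]$: tracking how the divisibility $i\,\alpha_i\in\pi^N\cO_K[[t]]$ propagates through the recursive Newton relations to yield precisely the index $[k/q^N]$ and the exact shape of $k_0$ and the factorial factor, while keeping the $\overline{\gamma}[l,n]$ versus $\overline{\gamma}(l)$ bookkeeping under control in the unrestricted case $e>p-1$.
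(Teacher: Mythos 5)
Your backbone identity (Coleman's Lemma \ref{lemma; coleman} plus the logarithmic coordinate, giving $\pi^{-N}\sum_{t_N}\partial^n_{\mathcal{G}}(t\oplus t_N)^k|_{t=0}=\pi^{Nn}\sum_{l\le n}c_l\,\partial^n\exp^l_{\mathcal{G}}(0)$ for $g_k(t)=\sum_l c_lt^l$) is correct, and your proof of (\ref{equation; mainest 2}) from it is sound and uses the same ingredients as the paper. The gap is in (\ref{equation; mainest 1}) and (\ref{equation; mainest 3}): there you defer everything to an unproven decay claim for the $c_l$, and, more seriously, the way you propose to use such a claim cannot succeed. Any argument that bounds $|c_l|$ \emph{by itself} and then multiplies by $|\partial^n\exp^l_{\mathcal{G}}(0)|\le|\varpi_p^n\overline{\gamma}[l,n]|$ keeps the factor $\overline{\gamma}[l,n]$, which is attached to the small indices $l\le n$ and is therefore $p$-adically \emph{larger} than the factor $\overline{\gamma}([k/q^N])$ that the proposition demands (the $\overline{\gamma}$'s are decreasing and $n\le[k/q^N]$ in the critical range). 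Concretely, take $K=\bbQ_p$, $\mathcal{G}=\widehat{\bbG}_m$, so $|\varpi_p|=1$, $|\overline{\gamma}(j)|=|j!|$, $|\overline{\gamma}(0)|=1$, $|\overline{\gamma}[l,n]|=|l!|$, and put $K_0=[k/p^N]$, $k_0=K_0-n$ with $n\le K_0$. Even granting the strongest pure $\pi$-power decay $|c_l|\le|p^{K_0-l}|$ (stronger than the shape $\pi^{k_0}\varpi_p^{k_0}/k_0!$ you assert), the term $l=n$ of your sum is only bounded by $|p^{Nn}|\cdot|p^{k_0}|\cdot|n!|=|p^{Nn+k_0}n!|$, whereas the right-hand side of (\ref{equation; mainest 1}) equals $|p^{Nn+k_0}K_0!/k_0!|=|p^{Nn+k_0}\binom{K_0}{n}n!|$. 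Your bound misses the factor $|\binom{K_0}{n}|$ and is strictly too weak whenever $p\mid\binom{K_0}{n}$; the same deficit defeats (\ref{equation; mainest 3}). This missing factor encodes binomial-coefficient divisibilities, not $\pi$-divisibility, so no "accumulation of $\pi^N$'s through Newton's identities" can produce it, nor can it be recovered by sharpening a standalone bound on $|c_l|$ of factorial-over-$\pi$-power shape. (A minor symptom of the same confusion: your claimed decay for $c_l$ involves $k_0=\max\{[k/q^N]-n,0\}$, but $c_l$ does not depend on $n$.)

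The paper's proof works precisely because it never separates the two estimates. It expands $1/F(t,X)=\sum_l G(t,X)^lF(0,X)^{-l-1}$ with $G=F(0,X)-F(t,X)=([\pi^N]t)H([\pi^N]t,X)$, and $F(0,X)^{-l-1}=\sum_m\binom{l+m}{m}\pi^m\bigl(\pi^{-1}(1-F(0,X))\bigr)^m$, so every contribution to the coefficient of $X^k$ is an integral multiple of $\binom{l+m}{m}\pi^mG_l([\pi^N]t)$ with $G_l\in t^l\cO_K[[t]]$ and, by counting $X$-degrees, $l+m\ge[(k+1)/q^N]$. The binomial coefficient is then fused with the factorial coming from $\overline{\gamma}[l,n]\le\overline{\gamma}(l)=l'!/\varpi_p^{l'}$ via the identity $\binom{l+m}{m}\pi^m\cdot\frac{l'!}{\varpi_p^{l'}}=\frac{(l+m)!}{\varpi_p^{l+m}}\binom{l'}{l}\frac{(l'-l)!}{\varpi_p^{l'-l}}\cdot\frac{\varpi_p^m\pi^m}{m!}$, and it is the constraint on the \emph{total} $l+m$ that turns $(l+m)!/\varpi_p^{l+m}$ into $\overline{\gamma}([k/q^N])$, with $(l'-l)!/\varpi_p^{l'-l}$ absorbed into $\overline{\gamma}(0)$. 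This joint bookkeeping is the actual content of (\ref{equation; mainest 1}) and (\ref{equation; mainest 3}); it is what your recursion would have to reproduce, carrying integer binomial coefficients and not just powers of $\pi$ through Newton's identities. Your sketch also omits a boundary case the paper must handle separately: when the top $X$-coefficient of $\pi^{-N}\partial_XF$ enters (the case $a=q^N-1$, $m=0$), the degree count alone falls short by one unit of $[\,\cdot\,/q^N]$, and the paper rescues it by observing that this coefficient equals $(q/\pi)^N\alpha_{q^N}(t)$ and is itself divisible by $[\pi^N]t$, raising the $t$-order from $l$ to $l+1$. Without that observation the index $[k/q^N]$ in the stated bounds is off by one.
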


\begin{proof}
We put $G(t,X)=F(0, X)-F(t, X)$, then $G(0,X)=G(t,0)=0$. We have 
$$\frac{1}{F(t, X)}
=\frac{1}{F(0,X)-G(t,X)}
=\sum_{l=0}^\infty \frac{G(t,X)^l}{F(0, X)^{l+1}} \quad \in \cO_K[[t,X]].  
$$
Since $G(0, X)=0$ and $G(t,X)$ is invariant for the translation $t \mapsto t_N$, 
it is of the form 
\begin{equation}\label{equation; GH}
G(t,X)=([\pi^N]t) H([\pi^N]t, X)
\end{equation}
 for some 
element $H$ in  $\cO_{K}[[t]][X]$.  Since $F(0, X) \equiv 1 \mod \pi$, 
the power series ${F(0,X)^{-l-1}}$ is equal to 
$$\sum_{m=0}^\infty \binom{-l-1}{m}(F(0,X)-1)^m
=\sum_{m=0}^\infty \binom{l+m}{m} \pi^m\left(\frac{1-F(0,X)}{\pi}\right)^m.$$ 
Hence we have 
\begin{align}
&\frac{\pi^{-N}\partial_X  F(t,X)}{F(t, X)}=\sum_{l=0}^\infty
 {\pi^{-N}\partial_XF(t,X)} \cdot
{G(t,X)^l}\cdot{F(0, X)^{-l-1}}\\
&=\sum_{l=0}^\infty  
\sum_{m=0}^\infty \binom{l+m}{m} \pi^m ({\pi^{-N}\partial_XF(t,X)}) 
{G(t,X)^l} \left(\frac{1-F(0,X)}{\pi}\right)^m. \label{equation; generating power sum 1}
\end{align}
To show the assertion for $k+1$, 
we look the coefficient of $X^k$ of (\ref{equation; generating power sum 1}).  
We consider the coefficients of the terms $X^a$, $X^b$ and $X^c$ with $a+b+c=k$ of 
$\pi^{-N} \partial_X F(t,X)$, $G(t,X)^l$ and $(1-F(0,X))^m\pi^{-m}$ respectively. 
Since $\mathrm{deg}\,\partial_X F(t,X)=q^N-1$, 
$\mathrm{deg}\,G(t,X)=q^N$ and $\mathrm{deg}\,(1-F(0,X))=q^N-1$ as  polynomials for $X$, 
we have $a \leq q^N-1,  b \leq l q^N$ and $c \leq m(q^N-1)$. 
Then  by (\ref{equation; GH}) the product of these coefficients is 
an integral linear combination of the terms of the form  
$$ \binom{l+m}{m} \pi^m G_l([\pi^N]t)$$
where $G_l(t)$ is a power series in $t^l \cO_{K}[[t]]$   and 
$l$, $m$ satisfies  
\begin{equation}\label{abc}
a+lq^N+m(q^N-1) \geq a+b+c = k.
\end{equation}
We estimate the absolute value of 
\begin{equation}\label{g_l}
 \binom{l+m}{m} \pi^m \partial_{\mathcal{G}}^n \,G_l([\pi^N]t)\;|_{t=0}. 
\end{equation}
By Proposition \ref{lemma; lambda exp coefficients}, we have 
$$\left| \partial_{\mathcal{G}}^n ([\pi^N]t)^d|_{t=0} \right|=
\left| \pi^{Nn}\frac{d^n}{dz^n} \exp^d_\mathcal{G}(z)|_{z=0} \right| \leq \left|  \pi^{Nn}
\varpi_p^n \right| \overline{\rho}[d,n].$$
Therefore, we have 
$$\left| \partial_{\mathcal{G}}^n G_l([\pi^N]t)|_{t=0} \right| 
\leq \left| \pi^{Nn}\varpi_p^n \right|  \overline{\rho}[l,n]  .$$ 
Hence we have (\ref{equation; mainest 2}). 
If $n<l$, then (\ref{g_l}) is zero  and there is nothing to prove. 
We assume that $n \geq l$. 
We let $l' \geq l$ be such that  $\overline{\rho}(l)= | l'!/\varpi_p^{l'}|$. 
Then 
\begin{align}
\left | \binom{l+m}{m} \pi^m \partial_{\mathcal{G}}^n 
 G_l([\pi]^Nt)|_{t=0} \right | \leq \left | \binom{l+m}{m} \pi^{m+Nn}
 \varpi_p^n \right | \overline{\rho}[l,n]   
\label{equation; power estimate 1}\\
\leq \left| \pi^{Nn}\varpi_p^n \frac{(l+m)!}{\varpi_p^{l+m}}\frac{(l'-l)!}{\varpi_p^{l'-l}} 
\binom{l'}{l}\frac{\varpi_p^m \pi^{m}}{m!} \right|. \label{equation; power estimate 2}
\end{align}

First we consider the case  $a \leq q^N-2$ or $m \not=0$. Then by (\ref{abc})
we have $$ l+m \geq \left[\frac{k+1}{q^N}\right].$$   
In particular, $ m \geq [(k+1)/q^N]-n$ and 
the value (\ref{equation; power estimate 2}) is less than or equal  to  
$$|{\pi^{Nn+k_0(1-\frac{1}{q-1})}\varpi_p^{n}}|
\overline{\rho}\left(\left[\frac{k+1}{q^N}\right]\right) \overline{\rho}(0)$$
where $k_0=\max\{[(k+1)/q^N]-n, 0\}$.
Hence in this case  we have (\ref{equation; mainest 1}).  
Suppose that $e \leq p-1$. If $l' < l+m$, then 
$\left| \varpi_p^m\right| < \left| \varpi_p^{l'-l}
\right | $ and hence 
the value  (\ref{equation; power estimate 2}) is less than 
$|\pi^{Nn}\varpi_p^{n}|\overline{\rho}\left(\left[\frac{k+1}{q^N}\right]\right).$ 
If $l' \geq l+m$, then 
$$
	  \overline{\rho}(l) =\left| \frac{l'!}{\varpi_p^{l'}} \right| \leq 
\overline{\rho}(l+m)  
\leq \overline{\rho}\left(\left[\frac{k+1}{q^N}\right]\right).$$ Hence 
the value  (\ref{equation; power estimate 1}) is also less than or equal to 
$|\pi^{m+Nn}\varpi_p^{n}|\overline{\rho}\left(\left[\frac{k+1}{q^N}\right]\right)$. 
Hence in this case  we have (\ref{equation; mainest 3}).

Finally we consider the case when  $a =q^N-1$ and $m=0$. 
Then 
the coefficient of $\pi^{-N} \partial_X F(t,X)$ of degree $a$ is 
$(q/\pi)^N\alpha_{q^N}(t)$, which is divisible by $[\pi^N]t$. Hence in this case 
the product of  the coefficient of $X^a$ in $\pi^{-N} \partial_X F(t,X)$, 
the coefficient of 
$X^b$ in $G(t,X)^l$
and the coefficient of $X^c$ in $(1-F(0,X))^m\pi^{-m}$ is 
an integral linear combination of terms in the form  
$G_{l+1}([\pi^N]t)$ for some $G_{l+1}(t) \in t^{l+1} \cO_{K}[[t]]$. In this case 
  $l$ satisfies  $l+1 \geq \left[{(k+1)}/{q^N}\right]$. Therefore 
\begin{align*}
\left| \partial_{\mathcal{G}}^n G_{l+1}([\pi]^Nt)|_{t=0} 
\right| \leq \left| \pi^{Nn}\varpi_p^n \right| \overline{\rho}[l+1,n]
 \leq \left| \pi^{Nn}\varpi_p^n 
 \right| \overline{\rho}\left(\left[\frac{k+1}{q^N}\right]\right).   
\end{align*}
If $n <l+1$, then (\ref{g_l}) is zero and there is nothing to prove.  We assume that 
$n \geq l+1$. 
In particular, by (\ref{abc}) we have $n \geq [(k+1)/q^N]$,  
and hence $k_0=\max\{[(k+1)/q^N]-n, 0\}=0$. 
Therefore we have (\ref{equation; mainest 1}) and (\ref{equation; mainest 3}). 
\end{proof}

\section{Integral structures on $p$-adic Fourier theory}

In this section, we give an explicit construction of  Schneider-Teitelbaum's  $p$-adic distribution associated to 
a rigid analytic function on the open unit disc. 

Let $\varphi(t)$ be a rigid analytic function on the open unit disc. 
We will construct a distribution $\mu_\varphi$ on $\cO_K$ such that
$$\int_{\cO_K} \, \exp(x \varpi_p\lambda(t)) d\mu_{\varphi}=\varphi(t). $$
If  we first had the Mahler like expansion for $K$-analytic functions, then 
it is easy to define the integral like as (\ref{equation: amice}), but as in  \cite{ST}, 
we first define the integral and then the Mahler like expansion for $K$-analytic function is 
shown by using this integral. 

We fix a Lubin-Tate formal group $\mathcal{G}$ with a parameter $\pi$ and 
denote its addition by $\oplus$. 
For $a \in \cO_K$ and a natural number $N$, we let 
\begin{equation}
\int_{a+\pi^N\cO_K} \; (x-a)^n \;d\mu_{\varphi}:=
\frac{1}{q^N\varpi_p^n}
\left(\partial_{\mathcal{G}}^n 
\sum_{t_N \in \mathcal{G}[\pi^N]}\varphi_a(t\oplus t_N)\right)\; \biggr |_{t=0}
\end{equation}
where 
$$
\varphi_a(t):=\exp(-a \varpi_p\lambda(t)) \varphi(t).
$$

We put $\varphi(t)=\sum_{k=0}^\infty c_k t^k$ and 
$\varphi_a(t)=\sum_{k=0}^\infty c^{(a)}_k t^k$. Then  
by Proposition \ref{proposition; power estimate}, we have 

\begin{align}
\left|\int_{a+\pi^N\cO_K} \; (x-a)^n \;d\mu_{\varphi} \right| & \leq 
\overline{\rho}(0)
 \left| \frac{\pi}{q}\right|^N  \left| \pi \right|^{Nn} 
 \sup_{k } \{ \,|c^{(a)}_k|  \overline{\rho}\left(\left[\frac{k}{q^N}\right]\right)  \,\}
\label{equation; fundamental integral estimate }  \\
&  \leq 
\overline{\rho}(0)
 \left| \frac{\pi}{q}\right|^N  \left| \pi \right|^{Nn} 
 \sup_{k } \{ \,|c_k| 
\overline{\rho}\left(\left[\frac{k}{q^N}\right]\right)  \,\}. 
\label{equation; fundamental integral estimate 2}
\end{align}
Here for the last estimate, we used the facts that 
$c^{(a)}_k$ is a integral liner combination of $c_0, \dots, c_k$ and 
the function $\overline{\rho}(m)$  for $m$ is decreasing. 

We define $\mu_\varphi$ on $LA_N(\cO_K, \bbC_p)$ as follows.
For an element $f$ of  $LA_N(\cO_K, \bbC_p)$,
suppose $f$ is of the form 
$\sum_{n=0}^\infty a_n (x-a)^n$ such that $a_n\pi^{nN} \rightarrow 0$ if $n \rightarrow \infty$
on $a+\pi^N\cO_K$. 
Then we define the integral of $f$ on $a+\pi^N\cO_K$ by 
\begin{equation}\label{equation; STdef}
\int_{a+\pi^N\cO_K} \; f(x) \;d\mu_{\varphi}:=
\sum_{n=0}^\infty\;a_n\int_{a+\pi^N\cO_K} \; (x-a)^n \;d\mu_{\varphi}. 
\end{equation}
We define 
\begin{equation}\label{def integral}
\int_{\cO_K} \; f(x) \; d\mu_{\varphi}=\sum_{a \!\!\!\mod \pi^N}
\int_{a+\pi^N\cO_K} f(x) \; d\mu_{\varphi}.
\end{equation}
We have to show the well-definedness of the integral. 

\begin{proposition}\label{proposition; estimate}
i) The integral (\ref{equation; STdef}) converges and 
does not depend on the choice of  the representative of $a \mod \pi^N$. 
The  integral (\ref{def integral}) does not depend on the choice of $N$. 
Hence $\mu_\varphi$ gives a well-defined element of $D(\cO_K,\bbC_p)$.\\
ii) For a polynomial $f(x)$, we have 
$$\int_{\cO_K} \; f(x) \;d\mu_{\varphi} 
=f( \varpi_p^{-1}\partial_{\mathcal{G}}) \varphi(t) |_{t=0}. $$
\end{proposition}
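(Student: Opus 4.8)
The plan is to establish part (i) by showing that each of the three constructions in the definition is well-defined and consistent, using the estimate (\ref{equation; fundamental integral estimate 2}) as the main engine, and then to derive part (ii) by a direct computation on monomials that reduces everything to the power-sum generating series (\ref{equation; generating power sum}).

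For part (i), I would proceed in three steps corresponding to the three claims. \emph{Convergence of} (\ref{equation; STdef}): since $f\in LA_N(\cO_K,\bbC_p)$ we have $|a_n\pi^{nN}|\to 0$, while the estimate (\ref{equation; fundamental integral estimate 2}) bounds $\left|\int_{a+\pi^N\cO_K}(x-a)^n\,d\mu_\varphi\right|$ by a constant (depending on $\varphi$ and $N$ but not on $n$) times $|\pi|^{Nn}$. Hence the general term $a_n\int_{a+\pi^N\cO_K}(x-a)^n\,d\mu_\varphi$ is bounded in absolute value by $\mathrm{const}\cdot|a_n\pi^{nN}|\to 0$, so the series converges in $\bbC_p$. \emph{Independence of the representative of $a$}: I would take two representatives $a$ and $a'=a+\pi^N b$ with $b\in\cO_K$, expand $f$ around both points, and observe that the two local power-series representations of the same function $f$ on the disc $a+\pi^N\cO_K=a'+\pi^N\cO_K$ determine each other by the change of variables $x-a'=(x-a)-\pi^N b$; the key point is that the defining formula for $\int_{a+\pi^N\cO_K}(x-a)^n\,d\mu_\varphi$ involves $\varphi_a$ through the translation-invariant power sum $\sum_{t_N}\varphi_a(t\oplus t_N)$, and the factor $\exp(-a\varpi_p\lambda(t))$ transforms correctly under $a\mapsto a'$ because $\exp(\varpi_p\lambda(t))$ is a group homomorphism $\mathcal{G}\to\widehat{\bbG}_m$. \emph{Independence of $N$}: I would show that refining the covering of $\cO_K$ by cosets mod $\pi^N$ to cosets mod $\pi^{N+1}$ leaves (\ref{def integral}) unchanged; concretely, each disc $a+\pi^N\cO_K$ breaks into $q$ subdiscs $a+\pi^N c+\pi^{N+1}\cO_K$, and I must check $\int_{a+\pi^N\cO_K}=\sum_{c}\int_{a+\pi^N c+\pi^{N+1}\cO_K}$. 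This comes down to the compatibility of the power sums over $\mathcal{G}[\pi^N]$ and $\mathcal{G}[\pi^{N+1}]$ under the summation $\sum_{t_{N+1}}=\sum_{t_N}\sum_{t_{N+1}\in\ker([\pi^N]\,\mathrm{mod}\,\pi^{N+1})}$, together with the relation $1_{a+\pi^N\cO_K}=\sum_c 1_{a+\pi^N c+\pi^{N+1}\cO_K}$.

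For part (ii), since both sides are linear in $f$ it suffices to treat monomials $f(x)=x^n$, and by further reduction (and linearity in the base point) it is cleanest to verify the formula $\int_{\cO_K}(x-a)^n\,d\mu_\varphi=\left.\partial_{\mathcal{G}}^n\varphi_a(t)\right|_{t=0}/\varpi_p^n$ summed appropriately, then specialize. Taking $N$ large enough that the polynomial is genuinely analytic of order $N$ (using the $N$-independence just proved), I would sum the defining expression (\ref{equation; STdef}) over $a\bmod\pi^N$; the total power sum $\sum_{a}\sum_{t_N\in\mathcal{G}[\pi^N]}$ runs over all of $\mathcal{G}[\pi^N]$ translated through all cosets, and as $N\to\infty$ the operator $q^{-N}\sum_{t_N}\partial_{\mathcal{G}}^n(\,\cdot\,)\oplus t_N|_{t=0}$ picks out precisely the action of $(\varpi_p^{-1}\partial_{\mathcal{G}})^n$ on $\varphi$ evaluated at $t=0$, by the very characterization $\int\exp(x\varpi_p\lambda(t))\,d\mu_\varphi=\varphi(t)$ together with $\partial_{\mathcal{G}}\exp(x\varpi_p\lambda(t))=x\varpi_p\exp(x\varpi_p\lambda(t))$. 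I expect the main obstacle to lie in the $N$-independence step of part (i): making the refinement identity rigorous requires a careful bookkeeping of how the Coleman-type power series of Lemma~\ref{lemma; coleman} behave under passage from $\mathcal{G}[\pi^N]$ to $\mathcal{G}[\pi^{N+1}]$, and in particular showing that the apparent $N$-dependence of the normalizing factor $q^{-N}\varpi_p^{-n}$ is exactly cancelled. Once this compatibility is in hand, part (ii) follows by a clean generating-function manipulation rather than further estimates.
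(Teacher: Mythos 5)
Your skeleton---convergence via (\ref{equation; fundamental integral estimate 2}), independence of the representative via the binomial expansion and the homomorphism property of $\exp(\varpi_p\lambda(t))$, independence of $N$ via a level-$N$ to level-$(N+1)$ distribution relation, and part (ii) via reduction to monomials---is the same as the paper's, but two of your steps have genuine gaps. The serious one is the mechanism for $N$-independence. Decomposing $\sum_{t_{N+1}\in\mathcal{G}[\pi^{N+1}]}$ as $\sum_{t_N\in\mathcal{G}[\pi^N]}\sum_{\ker}$ and adding characteristic functions cannot suffice, because the $q$ subdisc integrals are not built from one and the same power sum: the integral over $b+\pi^{N+1}\cO_K$ uses the twisted series $\varphi_b=\exp(-b\varpi_p\lambda)\varphi$, a different twist for each center $b$. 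The collapse happens only after summing over the $q$ centers, and the engine is character orthogonality: since $\exp(-b\varpi_p\lambda(t\oplus t_{N+1}))=\exp(-b\varpi_p\lambda(t))\,\exp(\varpi_p\lambda([-b]t_{N+1}))$, after the Leibniz-type rearrangement each term acquires the factor $\exp(\varpi_p\lambda([-b]t_{N+1}))$, and the sum of this factor over representatives $b$ of $\pi^N\cO_K/\pi^{N+1}\cO_K$ equals $q$ when $t_{N+1}\in\mathcal{G}[\pi^N]$ and $0$ otherwise (it is a character of $\pi^N\cO_K/\pi^{N+1}\cO_K\cong\cO_K/\pi$, nontrivial exactly when $[\pi^N]t_{N+1}\neq 0$, by nondegeneracy of the Lubin--Tate pairing given by $\exp(\varpi_p\lambda(\cdot))$). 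This is what converts $\sum_b\sum_{t_{N+1}}$ into $q\sum_{t_N}$ and cancels the normalization $q^{N+1}$ against $q^N$. Lemma \ref{lemma; coleman} is not the relevant tool here; in the paper it is used only in Section \ref{Power sums} for integrality of power sums. The same torsion-killing fact, $\exp(b\varpi_p\lambda(t_N))=\exp(\varpi_p\lambda([b]t_N))=1$ for $b\in\pi^N\cO_K$ and $t_N\in\mathcal{G}[\pi^N]$, is also the unstated point that makes your ``transforms correctly'' argument for representative-independence work; with it, that part of your step (i) agrees with the paper's proof.

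The second gap is that your proof of part (ii) is circular: the identity $\varphi(t)=\int_{\cO_K}\exp(x\varpi_p\lambda(t))\,d\mu_\varphi$ is precisely what the paper proves \emph{after} this proposition, and its proof uses Proposition \ref{proposition; estimate} (ii); it cannot be invoked here. No limit $N\to\infty$ is needed either. The correct observation is that the distribution-relation computation from step (i) is valid verbatim at the bottom level $a=N=0$, where $\mathcal{G}[\pi^0]=\{0\}$ and $q^0=1$, in which case it reads $\varpi_p^n\sum_{b\in\cO_K/\pi}\int_{b+\pi\cO_K}x^n\,d\mu_\varphi=\partial_{\mathcal{G}}^n\varphi(t)|_{t=0}$; the left-hand side is $\varpi_p^n\int_{\cO_K}x^n\,d\mu_\varphi$ by definition (\ref{def integral}), so $\int_{\cO_K}x^n\,d\mu_\varphi=(\varpi_p^{-1}\partial_{\mathcal{G}})^n\varphi(t)|_{t=0}$, and linearity gives (ii) for all polynomials. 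Finally, a minor point in your convergence argument: to know that the ``constant'' in (\ref{equation; fundamental integral estimate 2}) is finite you must check $\max_k\bigl\{|c_k|\,|\overline{\gamma}([k/q^N])|\bigr\}<\infty$, which follows from the growth bound $|\overline{\gamma}([k/q^N])|\leq Ckp^{-k/(eq^N(q-1))}$ combined with the rigid analyticity of $\varphi$ on the open unit disc; the paper makes this explicit.
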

\begin{proof}
Since  $\overline{\rho}([k/q^N]) \leq C  k p^{-\frac{k}{eqN(q-1)}}$ 
for some constant $C$ which  depends only on $e,q$ and $N$, 
the value 
$\sup_{k } \{ \,|c_k| 
 \overline{\rho}\left(\left[\frac{k}{q^N}\right]\right)  \,\}$ is finite. 
Hence the convergence  follows from 
(\ref{equation; fundamental integral estimate 2}). 
We show that the integral  (\ref{equation; STdef}) 
depends 
only on the class of $a$ modulo $\pi^N$. 
Since the integral is convergent,  we may assume that  
$f$ is a monomial $(x-a)^n$. 
For $a'$ such that $a' \equiv a \mod \pi^N$, we put $b=a'-a$.  
Since  $$(x-a)^n |_{a'+\pi^N \cO_K}
=\sum_{l=0}^n  \binom{n}{l} b^{n-l}   (x-a')^l |_{a'+\pi^N\cO_K},$$
it suffices to show that 
\begin{equation} \label{equation; well-defined}
\int_{a+\pi^N\cO_K} \; (x-a)^n \;d\mu_{\varphi}
=\sum_{l=0}^n  \binom{n}{l} b^{n-l} \int_{a'+\pi^N\cO_K} \; (x-a')^l \;d\mu_{\varphi}.
\end{equation}
However,  we have 
\begin{align*}
\varpi_p^{-n} \partial_{\mathcal{G}}^n \varphi_a(t \oplus t_m)
&=\varpi_p^{-n}\partial_{\mathcal{G}}^n \left( \exp(b \varpi_p \lambda(t))
 \varphi_{a'}(t \oplus t_N)\right)\\
&= \exp(b \varpi_p \lambda(t)) \sum_{l=0}^n \binom{n}{l}b^{n-l}
\varpi_p^{-l}\partial_{\mathcal{G}}^l \left(\varphi_{a'}(t \oplus t_m)\right).  
\end{align*}
Hence (\ref{equation; well-defined}) follows. 

Now we  show that the integral (\ref{def integral}) 
does not depend on $N$.
It is sufficient to show the distribution relation 
\begin{equation}
\int_{a+\pi^N\cO_K} \; f(x) \; d\mu_{\varphi}=\sum_{b \equiv a  \;\mathrm{mod} \pi^N}
\int_{b+\pi^{N+1}\cO_K} \; f(x) \; d\mu_{\varphi}
\end{equation}
where the sum runs over a representative $b$ of $\cO_K/\pi^{N+1}$ such that 
$b \equiv a \mod \pi^N$. 
To show this, replacing $\varphi$ by $\varphi_a$, 
we may assume that $a=0$ and $f(x)=x^n$. 
Then 
\begin{align*}
q^{N+1}\varpi_p^{n} &\sum_{b \equiv 0 \!\!\!\mod \pi^N} 
\int_{b+\pi^{N+1}\cO_K} \; x^n \;d\mu_{\varphi} \\
&=
\sum_{b \equiv 0 \!\!\!\mod \pi^N}\sum_{i=0}^k \binom{n}{k} b^{n-k}
\left( \varpi_p^{n-k}\partial_{\mathcal{G}}^k
\sum_{t_{N+1} \in \mathcal{G}[\pi^{N+1}]}\varphi_b(t\oplus t_{N+1})\right) \biggr |_{t=0}\\
&=
\sum_{b \equiv 0 \!\!\!\mod \pi^N}\left(\partial_{\mathcal{G}}^n
\sum_{t_{N+1} \in \mathcal{G}[\pi^{N+1}]} 
\exp(b\varpi_p \lambda(t))\varphi_b(t\oplus t_{N+1})\right)\biggr |_{t=0}\\
&=
\sum_{t_{N+1} \in \mathcal{G}[\pi^{N+1}]}\left(\sum_{b \equiv 0 \!\!\!\mod \pi^N}
\exp(-b\varpi_p \lambda(t))|_{t=t_{N+1}}\right)\partial_{\mathcal{G}}^n
\varphi(t\oplus t_{N+1})\biggr |_{t=0}\\
&=q
\left(\partial_{\mathcal{G}}^n
\sum_{t_{N} \in \mathcal{G}[\pi^{N}]} 
\varphi(t\oplus t_{N})\right)|_{t=0}=q^{N+1}\varpi^{n}_p 
\int_{\pi^N\cO_K} \; x^n \; d\mu_{\varphi}.
\end{align*}
The above calculation is also true when $a=N=0$, and hence we have 
$$\varpi^{n}_p \sum_{b \in \cO_K/\pi} 
\int_{b+\pi\cO_K} \; x^n \;d\mu_{\varphi} 
=\partial_{\mathcal{G}}^n
\varphi(t) |_{t=0}.$$
From this  the assertion ii) follows. 
\end{proof}

For $\varphi(t)=\sum_{k=0}^\infty c_k t^k \in R^{\mathrm{rig}}$, 
we define  $\Vert \varphi\Vert_N$ by 
\begin{equation}
\Vert \varphi\Vert_N:=\max_{k}
\left\{\; |c_k| \overline{\rho}\left(\left[\frac{k}{q^N}\right] \right) \; \right\}.
\end{equation}
Since $\overline{\rho}\left(\left[\frac{k}{q^N}\right] \right)  \sim p^{-kr}$ where 
$r=1/eq^N(q-1)$, the value 
$\Vert \varphi\Vert_N$ is approximately, 
$$ \Vert \varphi\Vert_{\overline{\bold{B}}(p^{-r})}=\max_{x \in \overline{\bold{B}}(p^{-r})}\{\;|\varphi(x)|\;\}$$ 
where $\overline{\bold{B}}(p^{-r}) \subset \bbC_p$ is the closed disc with radius $p^{-r}$ at origin.  

\begin{lemma} 
For an element $a \in \cO_K$, we put $\varphi_a(t)=\exp(-a\varpi_p \lambda(t)) \varphi(t)$.  
as before. Then $\Vert \varphi_a\Vert_N=\Vert \varphi\Vert_N$.
\end{lemma}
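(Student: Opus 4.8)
The plan is to prove the two inequalities $\Vert \varphi_a\Vert_N \leq \Vert \varphi\Vert_N$ and $\Vert \varphi\Vert_N \leq \Vert \varphi_a\Vert_N$, and in fact to deduce the second from the first by an invertibility argument. The key structural observation is that multiplication by $\exp(-a\varpi_p\lambda(t))$ is invertible on $R^{\mathrm{rig}}$, the inverse being multiplication by $\exp(a\varpi_p\lambda(t))$; concretely, $(\varphi_a)_{-a} = \varphi$ since $\exp(a\varpi_p\lambda(t))\exp(-a\varpi_p\lambda(t)) = 1$. Consequently, once we know that $\Vert \psi_a\Vert_N \leq \Vert \psi\Vert_N$ for every $\psi \in R^{\mathrm{rig}}$ and every $a \in \cO_K$, applying this with $\psi = \varphi_a$ and with $-a$ in place of $a$ gives $\Vert \varphi\Vert_N = \Vert (\varphi_a)_{-a}\Vert_N \leq \Vert \varphi_a\Vert_N$, and equality follows.

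Thus I focus on the single inequality $\Vert \varphi_a\Vert_N \leq \Vert \varphi\Vert_N$. First I would expand $\exp(-a\varpi_p\lambda(t)) = \sum_{j=0}^\infty P_j(-a\varpi_p)\, t^j$ using the defining relation $\exp(X\lambda(t)) = \sum_n P_n(X)\, t^n$. The crucial input is the integrality property recalled earlier: since $-a \in \cO_K$, each coefficient satisfies $P_j(-a\varpi_p) \in \cO_{\bbC_p}$, so $|P_j(-a\varpi_p)| \leq 1$. Writing $\varphi(t) = \sum_k c_k t^k$ and $\varphi_a(t) = \sum_k c^{(a)}_k t^k$, the Cauchy product gives $c^{(a)}_k = \sum_{i=0}^k P_{k-i}(-a\varpi_p)\, c_i$, whence by the ultrametric inequality $|c^{(a)}_k| \leq \max_{0 \leq i \leq k} |c_i|$.

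Next I would multiply through by $|\overline{\gamma}([k/q^N])|$ and invoke the fact that $|\overline{\gamma}(m)|$ is a decreasing function of $m$. Since $i \leq k$ forces $[i/q^N] \leq [k/q^N]$, we have $|\overline{\gamma}([k/q^N])| \leq |\overline{\gamma}([i/q^N])|$, and therefore
\[
|c^{(a)}_k|\,|\overline{\gamma}([k/q^N])| \;\leq\; \max_{0 \leq i \leq k} |c_i|\,|\overline{\gamma}([i/q^N])| \;\leq\; \Vert \varphi\Vert_N .
\]
Taking the maximum over $k$ yields $\Vert \varphi_a\Vert_N \leq \Vert \varphi\Vert_N$, which combined with the symmetry step above gives the claimed equality. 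I do not expect any serious obstacle here; the only points requiring care are applying the integrality bound $|P_{k-i}(-a\varpi_p)| \leq 1$ to the correct argument $-a \in \cO_K$, and combining the monotonicity of $\overline{\gamma}$ with the index shift $i \mapsto [i/q^N]$ so that the passage from $\max_i$ back to $\Vert\varphi\Vert_N$ is legitimate.
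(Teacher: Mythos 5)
Your proof is correct and follows essentially the same route as the paper: the paper also reduces to the single inequality $\Vert \varphi_a\Vert_N \leq \Vert \varphi\Vert_N$ (the reverse following by replacing $a$ with $-a$, as you make explicit) and derives it from exactly the two facts you use, namely that the coefficients $c^{(a)}_k$ are integral linear combinations of $c_0,\dots,c_k$ (since $P_j(-a\varpi_p)\in\cO_{\bbC_p}$ for $-a\in\cO_K$) and that $|\overline{\gamma}(m)|$ is decreasing in $m$. Your write-up merely spells out details the paper leaves implicit by its reference to the argument for (\ref{equation; fundamental integral estimate 2}).
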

\begin{proof}
It suffices to show $\Vert \varphi_a\Vert_N \leq \Vert \varphi\Vert_N$. 
This follows from the same argument  
showing  (\ref{equation; fundamental integral estimate 2}).
\end{proof}

Then Proposition \ref{proposition; power estimate} may
rewritten as follows, which is a precise version of Theorem 1.1 of the introduction.

\begin{theorem}\label{theorem; main}
i) 
Suppose that for $a \in \cO_K$, 
the function $f \in LA_N(\cO_K, \bbC_p)$ is given by a polynomial of degree  
$d$ on $a+\pi^N \cO_K$. For  $\varphi_k(t)=t^k$,  we have 
\begin{align}\label{equation; STint-estimate}
\left|\int_{a+\pi^N\cO_K} \; f(x) \;d\mu_{\varphi_k} \right| & \leq 
\overline{\rho}[0,d]
 \left| \frac{\pi}{q}\right|^N \;\Vert f \Vert_{a,N}.  
\end{align} 
We also have 
\begin{align}\label{equation; STint-estimate3}
\left|\int_{a+\pi^N\cO_K} \; f(x) \;d\mu_{\varphi_k} \right| \; \leq \; \overline{\rho}(0) 
\left| \frac{\pi^{k_0(1-\frac{1}{q-1})+N}}{q^N}\right|  \;\Vert f \Vert_{a,N} \overline{\rho}\left(\left[\frac{k}{q^N}\right] \right) 
\end{align}
where $k_0=\max\{[k/q^N]-d, 0\}$. 
Moreover,  if $e \leq p-1$, then we have 
\begin{align}\label{equation; STint-estimate4}
\left|\int_{a+\pi^N\cO_K} \; f(x) \;d\mu_{\varphi_k} \right| \; \leq \;  
\left| \frac{\pi}{q}\right|^N  \;\Vert f \Vert_{a,N} \overline{\rho}\left(\left[\frac{k}{q^N}\right] \right) .  
\end{align}
ii) We have 
\begin{align}\label{equation; STint-estimate1}
\left|\int_{a+\pi^N\cO_K} \; f(x) \;d\mu_{\varphi} \right| \; \leq \; 
 \overline{\rho}(0) 
 \left| \frac{\pi}{q}\right|^{N}  \;\Vert f \Vert_{a,N} 
 \Vert \varphi\Vert_{N}.  
\end{align}
Moreover, if $e \leq p-1$, then 
\begin{align}\label{equation; STint-estimate 2}
\left|\int_{a+\pi^N\cO_K} \; f(x) \;d\mu_{\varphi} \right| \; \leq \;
 \left| \frac{\pi}{q}\right|^{N}  \;\Vert f \Vert_{a,N} 
 \Vert \varphi\Vert_{N}. 
\end{align}
\end{theorem}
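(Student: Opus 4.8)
The plan is to read Theorem \ref{theorem; main} as a repackaging of the monomial estimates (\ref{equation; fundamental integral estimate }) and (\ref{equation; fundamental integral estimate 2}), together with the three power-sum bounds of Proposition \ref{proposition; power estimate}, into a statement about arbitrary $f$. The single mechanism used throughout is linearity: on $a+\pi^N\cO_K$ I write $f=\sum_n a_n(x-a)^n$ and use the definition (\ref{equation; STdef}) to split $\int_{a+\pi^N\cO_K} f\,d\mu_\varphi=\sum_n a_n\int_{a+\pi^N\cO_K}(x-a)^n\,d\mu_\varphi$, so that each inequality reduces to the corresponding monomial bound, followed by taking $\max_n$ and recognizing $\max_n|a_n\pi^{nN}|=\Vert f\Vert_{a,N}$.

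First I would dispatch part ii). Feeding (\ref{equation; fundamental integral estimate 2}) into each monomial and noting that $\max_k|c_k||\overline{\gamma}([k/q^N])|=\Vert\varphi\Vert_N$ by definition gives $|\int_{a+\pi^N\cO_K}(x-a)^n\,d\mu_\varphi|\leq|\overline{\gamma}(0)||\pi/q|^N|\pi|^{Nn}\Vert\varphi\Vert_N$; then pairing $\max_n|a_n|$ against $|\pi|^{Nn}$ produces $\Vert f\Vert_{a,N}$ and yields (\ref{equation; STint-estimate1}). For (\ref{equation; STint-estimate 2}) I would rerun the derivation of (\ref{equation; fundamental integral estimate 2}) with (\ref{equation; mainest 3}) in place of (\ref{equation; mainest 1}); under $e\leq p-1$ that bound carries no $\overline{\gamma}(0)$ factor, so the monomial estimate drops $|\overline{\gamma}(0)|$ and the same passage gives (\ref{equation; STint-estimate 2}).

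Next comes part i), where $\varphi=\varphi_k=t^k$ and $f$ is a degree-$d$ polynomial on $a+\pi^N\cO_K$, so the expansion is finite with $n\leq d$. The extra input is that $\varphi_{k,a}(t)=\exp(-a\varpi_p\lambda(t))t^k=\sum_{j\geq k}c_j^{(a)}t^j$ has integral coefficients, $|c_j^{(a)}|\leq 1$, since $\exp(-a\varpi_p\lambda(t))=\sum_m P_m(-a\varpi_p)t^m$ with $P_m(-a\varpi_p)\in\cO_{\bbC_p}$. For (\ref{equation; STint-estimate}) I would use (\ref{equation; mainest 2}), whose right-hand side is independent of the power $j$, to get $|\int_{a+\pi^N\cO_K}(x-a)^n\,d\mu_{\varphi_k}|\leq|\pi/q|^N|\pi|^{Nn}|\overline{\gamma}[0,n]|$, then bound $|\overline{\gamma}[0,n]|\leq|\overline{\gamma}[0,d]|$ by monotonicity of the growing range $[0,n]\subset[0,d]$ and take $\max_{n\leq d}$. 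Estimate (\ref{equation; STint-estimate4}) is parallel via (\ref{equation; mainest 3}): since $j\geq k$ gives $[j/q^N]\geq[k/q^N]$ and $|\overline{\gamma}(\cdot)|$ is decreasing (part i) of the opening Proposition of this section), the $j$-maximum collapses to $|\overline{\gamma}([k/q^N])|$, and $\max_{n\leq d}$ finishes.

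The delicate case, and the step I expect to be the main obstacle, is (\ref{equation; STint-estimate3}). Applying (\ref{equation; mainest 1}) term by term produces, for each $j\geq k$ and $n\leq d$, a factor $|\pi^{k_0(j,n)}\varpi_p^{k_0(j,n)}/k_0(j,n)!|$ with $k_0(j,n)=\max\{[j/q^N]-n,0\}$, whereas the statement isolates the single factor attached to $k_0=\max\{[k/q^N]-d,0\}$. The work is to show that $|\pi^{k_0(j,n)}\varpi_p^{k_0(j,n)}\overline{\gamma}([j/q^N])/k_0(j,n)!|$ is maximized at $j=k$, $n=d$: the $\overline{\gamma}$ part is controlled by monotonicity, but the period-factorial part requires analysing $v_p(\pi^m\varpi_p^m/m!)$, which is not monotone in $m$ on its own and must be treated jointly with the $\overline{\gamma}([j/q^N])$ factor. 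Here Proposition \ref{proposition; factorial/varpi} and the valuation estimates of the opening Proposition of this section supply the lower bounds on factorial valuations needed to locate the maximum. Verifying this joint monotonicity, rather than the routine $\max_n$ bookkeeping, is where the genuine effort lies.
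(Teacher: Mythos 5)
Your handling of part ii) and of estimates (\ref{equation; STint-estimate}) and (\ref{equation; STint-estimate4}) is correct and coincides with the paper's own route: the paper defines the integral through $\varphi_a(t)=\exp(-a\varpi_p\lambda(t))\varphi(t)$, applies Proposition \ref{proposition; power estimate} coefficient by coefficient, and takes maxima (this is all the paper means when it says Proposition \ref{proposition; power estimate} ``is rewritten as'' Theorem \ref{theorem; main}). For these estimates the comparison across the expansion index $j$ of $\varphi_a$ is harmless: the bound (\ref{equation; mainest 2}) is independent of $j$, and the bounds (\ref{equation; mainest 1}) with the factorial factor discarded, and (\ref{equation; mainest 3}), depend on $j$ only through $|\overline{\gamma}([j/q^N])|$, which is decreasing; integrality of $c_j^{(a)}=P_{j-k}(-a\varpi_p)$ does the rest.

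The gap is in (\ref{equation; STint-estimate3}), at exactly the step you flag as the main obstacle and then assert can be settled by joint monotonicity: that assertion is false. Writing $M_j=[j/q^N]$, $M_k=[k/q^N]$ and $h(m)=|\pi^m\varpi_p^m/m!|$, your plan needs
$$h(\max\{M_j-n,0\})\,|\overline{\gamma}(M_j)| \;\leq\; h(k_0)\,|\overline{\gamma}(M_k)|,\qquad j\geq k,\; n\leq d,\; k_0=\max\{M_k-d,0\}.$$
The reduction from $n$ to $d$ is fine (integrality of binomial coefficients plus $h\leq 1$), but the comparison in $j$ fails whenever $M_j-d$ is highly divisible by $p$. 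Concretely, take $K=\bbQ_p$, so $e=1$, $q=p$, $|\varpi_p|=1$, $|\overline{\gamma}(M)|=|M!|$, and $d=1$, $M_k=p^2$, $M_j=p^2+1$: then
$$h(M_j-1)\,|M_j!|=|p|^{p^2},\qquad h(M_k-1)\,|M_k!|=|p|^{p^2+1},$$
so the $j$-term bound exceeds the claimed one by a factor $p$, and replacing $p^2$ by $p^r$ makes the excess $p^{r-1}$, unbounded. Such $j$ really occur with unit coefficients: for $\mathcal{G}=\widehat{\bbG}_m$ and $a=1$ one has $\varphi_{k,a}(t)=(1+t)^{-1}t^k=\sum_{j\geq k}(-1)^{j-k}t^j$. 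Hence term-by-term absolute-value bounds can prove (\ref{equation; STint-estimate3}) only when $a\equiv 0 \bmod \pi^N$ (take the representative $a=0$, so that only $j=k$ appears) or when $[k/q^N]\leq d$ (so $k_0=0$); for general $a$ they yield only the weaker inequality in which $\pi^{k_0}\varpi_p^{k_0}/k_0!$ is replaced by $1$. To be fair, the paper itself never addresses this point, so you have located a genuine subtlety in its terse argument; but your proposed resolution, locating the maximum at $(j,n)=(k,d)$ via Proposition \ref{proposition; factorial/varpi}, cannot work, and closing the gap requires new input --- for instance an analogue of Proposition \ref{proposition; power estimate} for the twisted power sums $\sum_{t_N}\exp(-a\varpi_p\lambda(t_N))\,(t\oplus t_N)^k$ themselves, rather than absolute-value estimates on the monomials of $\varphi_{k,a}$.
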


\begin{corollary}\label{corollary; main}
We have 
\begin{align}\label{equation; STint-estimate cor 1}
\left|\int_{a+\pi^N\cO_K} \; f(x) \;d\mu_{\varphi} \right| \; \leq \; 
p^{\frac{p}{p-1}+\frac{1}{e(q-1)}}
 \overline{\rho}(0) 
 |\pi|^N  \;\Vert f \Vert_{a,N} 
 \Vert \varphi\Vert_{\bold{B}'(p^{-r})}  
\end{align}
where $r=1/eq^N(q-1)$ and 
\begin{equation}
\Vert \varphi\Vert_{\bold{B}'(p^{-r})}:=\max_{k}
\left\{\;  |c_k| k p^{-kr}\; \right\}.
\end{equation}
Moreover, if $e \leq p-1$, then 
\begin{align}\label{equation; STint-estimate cor 2}
\left|\int_{a+\pi^N\cO_K} \; f(x) \;d\mu_{\varphi} \right| \; \leq \; 
p^{\frac{p}{p-1}+\frac{1}{e(q-1)}}
 |\pi|^N  \;\Vert f \Vert_{a,N} 
 \Vert \varphi\Vert_{\bold{B}'(p^{-r})}.  
\end{align}
\end{corollary}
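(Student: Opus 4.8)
The plan is to read the Corollary as a purely formal consequence of Theorem~\ref{theorem; main} ii), so that the only genuine content is a comparison between the two norms $\Vert\varphi\Vert_N$ and $\Vert\varphi\Vert_{\bold{B}'(p^{-r})}$. Concretely, starting from (\ref{equation; STint-estimate1}) it suffices to establish the coefficientwise estimate
$$\left|\overline{\gamma}\left(\left[\frac{k}{q^N}\right]\right)\right|\;\le\; p^{\frac{p}{p-1}+\frac{1}{e(q-1)}}\,k\,p^{-kr},\qquad r=\frac{1}{eq^N(q-1)},$$
for every $k\ge 1$. Taking the maximum of this against $|c_k|$ turns it into $\Vert\varphi\Vert_N\le p^{\frac{p}{p-1}+\frac{1}{e(q-1)}}\Vert\varphi\Vert_{\bold{B}'(p^{-r})}$, and feeding that back into Theorem~\ref{theorem; main} ii) produces (\ref{equation; STint-estimate cor 1}); the sharper (\ref{equation; STint-estimate cor 2}) for $e\le p-1$ is obtained identically but starting from (\ref{equation; STint-estimate 2}), where the factor $|\overline{\gamma}(0)|$ is already absent.

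For the coefficientwise estimate I would first make the valuation of $m!/\varpi_p^m$ fully explicit. Since $|\varpi_p|=p^{-s}$ with $s=\frac{1}{p-1}-\frac{1}{e(q-1)}$ and $v_p(m!)=\frac{m-S_p(m)}{p-1}$ by Legendre's formula ($S_p$ the base-$p$ digit sum), one gets $|m!/\varpi_p^m|=p^{\frac{S_p(m)}{p-1}-\frac{m}{e(q-1)}}$. The crude digit bound $S_p(m)\le(p-1)([\log_p m]+1)$ then gives $|m!/\varpi_p^m|\le p\,m\,p^{-m/(e(q-1))}$ for $m\ge 1$, whence $|\overline{\gamma}(m)|=\max_{j\ge m}|j!/\varpi_p^j|\le\max_{j\ge m}p\,j\,p^{-j/(e(q-1))}$. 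Writing $m=[k/q^N]$, so that $mq^N\le k<(m+1)q^N$, the inequality $kr<\frac{m+1}{e(q-1)}$ converts the decay $p^{-j/(e(q-1))}$ at $j=m$ into $p^{1/(e(q-1))}p^{-kr}$, and since $k\ge m$ the single term $j=m$ is already bounded by $p^{1+\frac{1}{e(q-1)}}k\,p^{-kr}$, comfortably inside the claimed constant.

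The step I expect to be the main obstacle is the maximum over all $j\ge m$ rather than just $j=m$: the digit sum $S_p(j)$ can jump upward while $p^{-j/(e(q-1))}$ decays only linearly in the exponent, so one must check that no index $j>m$ overshoots the target. This is exactly where the extra slack $p^{1/(p-1)}$ in $p^{\frac{p}{p-1}+\frac{1}{e(q-1)}}=p^{1+\frac{1}{p-1}+\frac{1}{e(q-1)}}$ is consumed, and verifying it requires a uniform control of $S_p(j)-S_p(m)$ against $j-m$ (uniform in $N$, unlike the crude $N$-dependent bound used in the proof of Proposition~\ref{proposition; estimate}). When $e\le p-1$ the difficulty evaporates, since Proposition~\ref{proposition; gamma(n)} pins down the maximizer of $|\overline{\gamma}(m)|$ exactly and Proposition~\ref{proposition; factorial/varpi} gives $|\overline{\gamma}(0)|=|\pi/q|$, so that $|\overline{\gamma}(0)|$ can be absorbed into the prefactor and (\ref{equation; STint-estimate cor 2}) follows cleanly from (\ref{equation; STint-estimate 2}). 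Finally I would dispose of the boundary terms separately — the constant index $k=0$, where $\overline{\gamma}(0)$ must be matched against the point-mass part of $\mu_\varphi$, and the small indices with $[k/q^N]=0$ — so that the passage from the coefficientwise bound to the norm inequality, and thence to (\ref{equation; STint-estimate cor 1}), is uniform in $k$.
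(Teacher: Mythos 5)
Your overall strategy is the same as the paper's: deduce the Corollary from Theorem \ref{theorem; main} ii) by a coefficientwise bound on $\left|\overline{\gamma}\left(\left[k/q^N\right]\right)\right|$ against $k\,p^{-kr}$. But the inequality you reduce everything to is missing a factor $q^{-N}$, and without it the Corollary does not follow. Theorem \ref{theorem; main} ii) carries the factor $\left|\pi/q\right|^N$, while (\ref{equation; STint-estimate cor 1}) has $|\pi|^N$; since $|q|=q^{-1}$, one has $\left|\pi/q\right|^N = q^N\,|\pi|^N$, so your estimate $\Vert\varphi\Vert_N \leq p^{\frac{p}{p-1}+\frac{1}{e(q-1)}}\Vert\varphi\Vert_{\bold{B}'(p^{-r})}$ fed into (\ref{equation; STint-estimate1}) only yields
$$\left|\int_{a+\pi^N\cO_K} f(x)\,d\mu_{\varphi}\right| \;\leq\; p^{\frac{p}{p-1}+\frac{1}{e(q-1)}}\,|\overline{\gamma}(0)|\,q^N\,|\pi|^N\,\Vert f\Vert_{a,N}\,\Vert\varphi\Vert_{\bold{B}'(p^{-r})},$$
which is weaker than the Corollary by the large factor $q^N$. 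What the paper actually invokes is the stronger inequality
$$\left|\overline{\gamma}\left(\left[\frac{k}{q^N}\right]\right)\right| \;\leq\; k\,q^{-N}\,p^{\frac{p}{p-1}+\frac{1}{e(q-1)}-\frac{k}{eq^N(q-1)}},$$
whose extra $q^{-N}$ is precisely what converts $\left|\pi/q\right|^N$ into $|\pi|^N$. The same loss occurs in your deduction of (\ref{equation; STint-estimate cor 2}) from (\ref{equation; STint-estimate 2}).

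The repair is already latent in your own computation: when bounding the single term $j=m$ you relaxed $m=\left[k/q^N\right]$ to $m\leq k$, and that is exactly where $q^N$ gets discarded. Keeping $m\leq k\,q^{-N}$ instead gives $p\,m\,p^{-m/(e(q-1))}\leq k\,q^{-N}\,p^{1+\frac{1}{e(q-1)}}\,p^{-kr}$, which is the single-term case of the paper's inequality. Even so, your proposal would remain incomplete for the reason you yourself flag: $\overline{\gamma}(m)$ involves a supremum over all $j\geq m$, the digit sum $S_p(j)$ can jump upward, and you only express the expectation that the slack $p^{1/(p-1)}$ absorbs this, without proving it. (The paper asserts its inequality without proof as well, so you are no worse off on that point, but an assertion is not a proof.) Finally, for $k<q^N$, where $\left[k/q^N\right]=0$, no inequality of this shape can hold: the left side is $|\overline{\gamma}(0)|\geq 1$ while the right side is of order $k\,q^{-N}$; so the small-$k$ range requires a genuinely separate argument, not the brief appeal to ``boundary terms'' and point masses sketched at the end of your proposal.
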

\begin{proof}
The formula follows from 
$$\overline{\rho}\left(\left[\frac{k}{q^N}\right] \right)  \leq 
k q^{-N} p^{\frac{p}{p-1}+\frac{1}{e(q-1)}-\frac{k}{eq^N(q-1)}}.$$
\end{proof}

As before, we define polynomials $P_{n}$ by 
$$\exp(x \lambda(T))=\sum_{n=0}^\infty \; P_n(x) T^n.$$
Then by formal computation,  we have 
$$P_k(\partial_{\mathcal{G}}) \varphi(t)|_{t=0}=\frac{1}{k!}\partial^k \varphi(t) |_{t=0}$$ 
where $\partial=d/dt$ (for example, formula $6$ of Lemma 4.2 of \cite{ST}). 
We let  $\varphi_n(t)=t^n$ and $\mu_{\varphi_n}$ the 
distribution  associated to $\varphi_n(t)$. Then by 
Proposition \ref{proposition; estimate} ii) 
we have 
$$\int_{\cO_K} \; P_k(x \varpi_p) \;d\mu_{\varphi_n}=\sum_{n=0}^\infty\; P_k( \partial_\mathcal{G})
\varphi_n(t)|_{t=0}=
\begin{cases}
1  \quad(k=n)\\
0  \quad (k \not=n). 
\end{cases}$$
Hence if $\varphi(t)=\sum_{k=0}^\infty c_k t^k$, then 
$$\int_{\cO_K} \; P_k(x \varpi_p) \;d\mu_\varphi=c_k.$$
Equivalently,    
$$\varphi(t)=\int_{\cO_K} \, \exp(x \varpi_p\lambda(t)) d\mu_\varphi.$$

\begin{proposition}\label{proposition; p_n}
For $N \geq 1$, we have 
 $$\left|\frac{q}{\pi}\right|^N 
\overline{\rho}\left(\left[\frac{n}{q^N}\right]\right)^{-1}
c^{-1} \leq 
|| P_n(x \varpi_p)||_{N} \leq \underline{\rho}\left(\left[\frac{n}{q^N}\right]\right)^{-1}$$
where $c=1$ if $e \leq p-1$ and $c=\overline{\rho}(0)$, otherwise. 
\end{proposition}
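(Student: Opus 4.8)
The plan is to establish the two inequalities by different methods: the lower bound by a duality pairing built on Theorem \ref{theorem; main}, and the upper bound by a direct estimate of $P_n(x\varpi_p)$ on each residue disc $a+\pi^N\cO_K$. The bridge between them is the characterizing formula of the correspondence, $\int_{\cO_K}P_n(x\varpi_p)\,d\mu_{\varphi}=c_n$ for $\varphi(t)=\sum_k c_k t^k$, which exhibits the $P_n(x\varpi_p)$ and the distributions $\mu_{\varphi_k}$ (with $\varphi_k(t)=t^k$) as dual families.

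For the lower bound I would pair $P_n(x\varpi_p)$ against $\mu_{\varphi_n}$ with $\varphi_n(t)=t^n$. By definition of the dual norm $\Vert\cdot\Vert^*$ with respect to $\Vert\cdot\Vert_N$,
$$1=\left|\int_{\cO_K}P_n(x\varpi_p)\,d\mu_{\varphi_n}\right|\le \Vert P_n(x\varpi_p)\Vert_N\cdot\Vert\mu_{\varphi_n}\Vert^*.$$
Summing the estimate (\ref{equation; STint-estimate1}) over the $q^N$ classes $a\bmod\pi^N$ and using the ultrametric inequality gives $\Vert\mu_{\varphi}\Vert^*\le|\overline{\gamma}(0)|\,|\pi/q|^N\,\Vert\varphi\Vert_N$; since $\Vert\varphi_n\Vert_N=|\overline{\gamma}([n/q^N])|$, this yields $\Vert P_n(x\varpi_p)\Vert_N\ge |q/\pi|^N|\overline{\gamma}(0)|^{-1}|\overline{\gamma}([n/q^N])|^{-1}$. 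When $e\le p-1$ I would instead use the sharper bound (\ref{equation; STint-estimate 2}), which drops the factor $|\overline{\gamma}(0)|$ and produces the constant $c=1$, reproducing the claimed lower bound in both cases.

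For the upper bound, fix $a\in\cO_K$ and substitute $x=a+\pi^N y$, so that $\Vert P_n(x\varpi_p)\Vert_{a,N}$ is the Gauss norm in the variable $y\in\cO_{\bbC_p}$ of $P_n((a+\pi^N y)\varpi_p)$, i.e.\ of the coefficient of $t^n$ in
$$\exp\big((a+\pi^N y)\varpi_p\lambda(t)\big)=\exp(a\varpi_p\lambda(t))\cdot\exp\big(y\varpi_p\lambda([\pi^N]t)\big),$$
where I used $\pi^N\lambda(t)=\lambda([\pi^N]t)$. Expanding the second factor as $\sum_m P_m(y\varpi_p)([\pi^N]t)^m$ and the first as an integral power series (its coefficients $P_i(a\varpi_p)$ lie in $\cO_{\bbC_p}$) gives a decomposition $P_n((a+\pi^N y)\varpi_p)=\sum_{m=0}^n C_m\,P_m(y\varpi_p)$ with $C_m\in\cO_{\bbC_p}$. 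The decisive structural fact is $[\pi^N]t\equiv t^{q^N}\bmod\pi$ (Lemma \ref{lemma; coleman}), so the coefficient of $t^n$ in $([\pi^N]t)^m$ acquires a positive power of $\pi$ as soon as $mq^N>n$; this is where the floor $[n/q^N]$ is born. Bounding $\sup_y|P_m(y\varpi_p)|\le|\underline{\gamma}(m)|^{-1}$ via Proposition \ref{lemma; lambda exp coefficients} and combining with this $\pi$-divisibility, the terms with $m\le[n/q^N]$ are controlled directly (there $|\underline{\gamma}(m)|^{-1}$ is increasing and bounded by $|\underline{\gamma}([n/q^N])|^{-1}$), while for $m>[n/q^N]$ the $\pi$-power in $C_m$ must offset the growth of $|\underline{\gamma}(m)|^{-1}$.

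The main obstacle is exactly this last compensation: one must show that the $\pi$-adic valuation gained in $C_m$ for $m>[n/q^N]$ dominates the growth of $|\underline{\gamma}(m)|^{-1}$, uniformly in $a$. This is the same kind of delicate factorial-versus-$\varpi_p$ weighing carried out in Proposition \ref{proposition; power estimate} and Proposition \ref{proposition; factorial/varpi}, and I expect the argument to run parallel to those, with Lemma \ref{lemma; coleman} supplying the integrality and $[\pi^N]$-divisibility of the relevant coefficients. I anticipate the ramified case $e>1$ to be the most technical point, since there the valuations of $m!/\varpi_p^m$ are not monotone and the extremal index $m$ cannot be read off as cleanly as when $e\le p-1$.
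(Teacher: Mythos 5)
Your lower-bound argument is correct and is essentially the paper's own: pair $P_n(x\varpi_p)$ against $\mu_{\varphi_n}$, use $\int_{\cO_K}P_n(x\varpi_p)\,d\mu_{\varphi_n}=1$, the ultrametric inequality over the classes $a\bmod\pi^N$, and the estimates of Theorem \ref{theorem; main} (the sharper one (\ref{equation; STint-estimate 2}) when $e\le p-1$). The genuine gap is in your upper bound, and it sits exactly where you flagged it: the compensation for $m>[n/q^N]$ is not a routine verification, and the ingredient you propose to extract it from --- the divisibility $|C_m|\le|\pi|^{\,m-[n/q^N]}$ coming from $[\pi^N]t\equiv t^{q^N}\bmod\pi$ --- is quantitatively insufficient. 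Test it on $\mathcal{G}=\widehat{\bbG}_m$, $K=\bbQ_p$, $\pi=p$: there $|\varpi_p|=1$, so $|\underline{\gamma}(m)|=|m!|$ and $\sup_y|P_m(y\varpi_p)|=|m!|^{-1}$ exactly, hence your route needs $v_p(C_m)\ge v_p\bigl(m!/k_1!\bigr)$ with $k_1=[n/p^N]$. Taking $n=p^{N+2}-1$, so $k_1=p^2-1$, and $m=p^2$, this demands $v_p(C_m)\ge v_p(p^2)=2$, while the crude divisibility yields only $m-k_1=1$. The missing valuation has to be harvested from binomial coefficients, i.e.\ precisely the factorial-versus-$\varpi_p$ bookkeeping of Lemma \ref{lemma; binom} and Propositions \ref{proposition; factorial/varpi} and \ref{proposition; power estimate} that you defer with ``I expect the argument to run parallel''; that deferred step is the actual content of the upper bound, so the proposal does not contain a proof of it.

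It is worth seeing how the paper sidesteps this compensation entirely, because the decomposition is genuinely different from yours. The paper first reduces to $a=0$ via the addition formula $P_n((x+a)\varpi_p)=\sum_{i+j=n}P_i(x\varpi_p)P_j(a\varpi_p)$ with $|P_j(a\varpi_p)|\le1$, giving $\Vert P_n(x\varpi_p)\Vert_{a,N}\le\max_{i\le n}\Vert P_i(x\varpi_p)\Vert_{0,N}$; it then passes to the special group with $\lambda(t)=\sum_l t^{q^l}/\pi^l$ and writes $\lambda([\pi^N]t)=\lambda(t^{q^N})+\pi tf(t)$, so that the coefficient $a_k^{(n)}$ of $x^k$ in $P_n(x\pi^N\varpi_p)$ is the $t^n$-coefficient of
\begin{equation*}
\frac{\varpi_p^k\lambda([\pi^N]t)^k}{k!}
=\sum_{i=0}^k t^if(t)^i\,\frac{\varpi_p^i\pi^{i}}{i!}\,
\frac{\varpi_p^{k-i}\lambda(t^{q^N})^{k-i}}{(k-i)!}.
\end{equation*}
The factorial is split multinomially between the two factors: $t^if(t)^i\varpi_p^i\pi^i/i!$ is integral, while $\varpi_p^{k-i}\lambda(t^{q^N})^{k-i}/(k-i)!$ is supported in degrees $q^Nj$ with $j$-th coefficient bounded by $|\underline{\gamma}[k-i,j]|^{-1}$ by Proposition \ref{lemma; lambda exp coefficients}. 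Since only $j\le[n/q^N]$ can contribute to total degree $n$, the bound $|\underline{\gamma}([n/q^N])|^{-1}$ falls out immediately: indices larger than $[n/q^N]$ never appear, so no compensation is ever needed. In your expansion in the family $P_m(y\varpi_p)$, $m\le n$, indices up to $n$ do occur, with norms growing like $|\underline{\gamma}(m)|^{-1}$, and completing your route would force you to re-prove estimates of the strength of Proposition \ref{proposition; power estimate} for the coefficients $C_m$.
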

\begin{proof}
We have 
\begin{align*}
1&=\left|\int_{\cO_K} P_n(x \varpi_p) \;d\mu_{\varphi_n} \right | 
\leq \max_{a}\{ \left|\int_{a+\pi^N\cO_K}  P_n(x \varpi_p) \;d\mu_{\varphi_n} \right | \}\\
& \leq \left|\frac{\pi}{q}\right|^N || P_n(x \varpi_p)\Vert_{N} \,\,
\overline{\rho}\left(\left[\frac{n}{q^N}\right]\right)
\overline{\rho}(0).
\end{align*}
Similarly, if $e \leq p-1$, then by using 
(\ref{equation; STint-estimate4}), we get the lower estimate. 

We show the upper estimate.  
We put $P_n( x \pi^N  \varpi_p)=\sum_{k=1}^n a^{(n)}_k x^k$ for $n \geq 1$. 
By definition of $P_n$, 
the value 
$
 a^{(n)}_k$ is the coefficient of $t^n$ of 
${\varpi_p^k \lambda( [\pi^N] t)^k}/{k!}
$. 
Since $\underline{\rho}(k)$ is decreasing with
$k$, we may assume that $\lambda(t)=\sum_{l=0}^\infty t^{q^l}/\pi^l$. 
Since $[\pi^N]t \equiv t^{q^N} \!\!\mod \pi$,  we have 
$$\lambda([\pi^N]t) \equiv \lambda(t^{q^N}) +\pi t f(t)$$
for some $f(t) \in \cO_{\bbC_p}[[t]]$. (cf. \cite[Lemma 4]{Ho}.)
Hence we have 
$$ \frac{\varpi_p^k\lambda([\pi^N]t)^k}{k!} =
\sum_{i=0}^k t^i f(t)^{i}  \frac{\varpi_p^i \pi^{i}}{i!} 
\frac{\varpi_p^{k-i}\lambda(t^{q^N})^{k-i}}{(k-i)!}.  
$$
Therefore by Proposition \ref{lemma; lambda exp coefficients} we have 
$$|a^{(n)}_k| \leq 
\underline{\rho}\left(\left[\frac{n}{q^N}\right]\right)^{-1}.$$
Hence we have $
|| P_n(x \varpi_p) ||_{0, N} 
 \leq \underline{\rho}\left(\left[\frac{n}{q^N}\right]\right)^{-1}.
$
Then by the formula before Lemma 4.4 of \cite{ST}, for  $a \in \cO_K$, we have 
$$
|| P_n(x \varpi_p) ||_{a, N} \leq \max_{0 \leq i \leq n} {|| P_i(x \varpi_p) ||_{0, N}}
 \leq \underline{\rho}\left(\left[\frac{n}{q^N}\right]\right)^{-1}.
$$
\end{proof}

Now we prove that our definition of the distribution  coincides 
with that of Schneider-Teitelbaum. 
Namely, it has the characterization property 
(\ref{equation; structure of distribution K}). 

\begin{theorem}
Let $\mu_\varphi$ be the distribution associated to a rigid analytic function 
$\varphi(t)$ on the open unit disc. 
Then 
$$\varphi(t)=\int_{\cO_K} \, \exp(x \varpi_p\lambda(t)) d\mu_\varphi.$$
Conversely, for every distribution $\mu$, there exists a unique 
rigid analytic function  $\varphi$  such that  
$\mu=\mu_\varphi$.   Then $\varphi$ is the Fourier transform of $\mu$, and we have $F_{\mu_\varphi} = \varphi$.
In particular, we have an isomorphism of algebras, 
\begin{equation*}
 D(\cO_{K}, \bbC_p)\cong R^{\mathrm{rig}}.
\end{equation*}
\end{theorem}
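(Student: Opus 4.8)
The plan is to handle the two directions of the correspondence separately and then upgrade the resulting bijection to an algebra isomorphism. For the forward direction, I would start from the defining expansion $\exp(x\varpi_p\lambda(t))=\sum_{n=0}^\infty P_n(x\varpi_p)t^n$ and, for a fixed $t$ in the open unit disc, regard the right-hand side as a series in $LA(\cO_K,\bbC_p)$. Since the coefficients $a_n=t^n$ satisfy $\overline{\lim}_n|a_n|^{1/n}=|t|<1$, the Schneider--Teitelbaum basis theorem stated above guarantees convergence in $LA(\cO_K,\bbC_p)$, hence in some $LA_N$. Because $\mu_\varphi$ is a continuous functional on each $LA_N$ (this is exactly the content of Proposition \ref{proposition; estimate} and Theorem \ref{theorem; main}), I may interchange the integral with the sum and reduce the entire claim to the single identity $\int_{\cO_K}P_n(x\varpi_p)\,d\mu_\varphi=c_n$.

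To prove that identity I would invoke Proposition \ref{proposition; estimate}(ii): for the polynomial $f(x)=P_n(x\varpi_p)$ it gives $\int_{\cO_K}P_n(x\varpi_p)\,d\mu_\varphi=P_n(\partial_{\mathcal{G}})\varphi(t)|_{t=0}$, using $f(\varpi_p^{-1}\partial_{\mathcal{G}})=P_n(\partial_{\mathcal{G}})$. The key computation is then purely formal: summing against a dummy variable $u$ and using $\sum_n P_n(X)u^n=\exp(X\lambda(u))$ with $X=\partial_{\mathcal{G}}$ yields $\sum_n\bigl(P_n(\partial_{\mathcal{G}})\varphi|_{t=0}\bigr)u^n=\exp(\lambda(u)\partial_{\mathcal{G}})\varphi(t)|_{t=0}$. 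Since $\partial_{\mathcal{G}}\lambda(t)=1$, the operator $\partial_{\mathcal{G}}$ is differentiation in the coordinate $\lambda(t)$, so $\exp(\lambda(u)\partial_{\mathcal{G}})$ is translation by $\lambda(u)$; because $\lambda(t\oplus u)=\lambda(t)+\lambda(u)$ this translation sends $\varphi(t)\mapsto\varphi(t\oplus u)$, which at $t=0$ equals $\varphi(u)$. Comparing coefficients in $\varphi(u)=\sum_k c_k u^k$ gives $\int_{\cO_K}P_n(x\varpi_p)\,d\mu_\varphi=c_n$, hence $\int_{\cO_K}\exp(x\varpi_p\lambda(t))\,d\mu_\varphi=\varphi(t)$.

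For the converse, uniqueness is immediate: the forward identity recovers $\varphi$ from $\mu_\varphi$, so distinct $\varphi$ yield distinct distributions. For existence, given a distribution $\mu$ I would set $c_n:=\int_{\cO_K}P_n(x\varpi_p)\,d\mu$ and $\varphi(t):=\sum_n c_n t^n$, then check $\varphi\in R^{\mathrm{rig}}$ and $\mu=\mu_\varphi$. To see that $\varphi$ is rigid analytic I would bound $|c_n|\le C_N\,\Vert P_n(x\varpi_p)\Vert_N$, where $C_N$ is the operator norm of $\mu$ on $LA_N$; Proposition \ref{proposition; p_n} gives $\Vert P_n(x\varpi_p)\Vert_N\le|\underline{\gamma}([n/q^N])|^{-1}$, which by the earlier bound $|\underline{\gamma}(k)|\ge p^{1/(p-1)-k/(e(q-1))}$ grows no faster than $p^{n/(eq^N(q-1))}$ up to a constant. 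Since this exponent tends to $0$ as $N\to\infty$, for any $r_0<1$ one chooses $N$ large enough that $r_0\,p^{1/(eq^N(q-1))}<1$, forcing $|c_n|r_0^n\to 0$; this is the step requiring the most care. Finally $\mu$ and $\mu_\varphi$ agree on every $P_n(x\varpi_p)$ by construction and the forward direction, and since these span a dense subspace of $LA(\cO_K,\bbC_p)$ by the Schneider--Teitelbaum basis theorem, continuity forces $\mu=\mu_\varphi$.

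It remains to see that the bijection $\varphi\leftrightarrow\mu_\varphi$ is an algebra isomorphism. Linearity in $\varphi$ is clear from the definition of the integral, and the two-sided estimates (Theorem \ref{theorem; main} together with Proposition \ref{proposition; p_n}) show that the map and its inverse are continuous, giving the topological statement. For multiplicativity I would compute, using the definition of the convolution product and $\exp((x+y)\varpi_p\lambda(t))=\exp(x\varpi_p\lambda(t))\exp(y\varpi_p\lambda(t))$, that $\int_{\cO_K}\exp(x\varpi_p\lambda(t))\,d(\mu_\varphi*\mu_\psi)=\varphi(t)\psi(t)$, each step pulling the factor independent of the integration variable outside and applying the forward identity twice. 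By the uniqueness just established this gives $\mu_{\varphi\psi}=\mu_\varphi*\mu_\psi$, and checking that the constant $1$ maps to the Dirac distribution at $0$ (the convolution unit) completes the verification. I expect the genuine difficulty to be concentrated in the rigid-analyticity estimate of the converse, the interchange-of-limit justification in the forward direction being routine once continuity of $\mu_\varphi$ on each $LA_N$ is in hand.
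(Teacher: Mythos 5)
Your proposal is correct, and its skeleton coincides with the paper's own proof: the forward direction rests on Proposition \ref{proposition; estimate} ii) together with the identity $P_n(\partial_{\mathcal{G}})\varphi(t)|_{t=0}=c_n$, and the converse defines $c_k=\int_{\cO_K}P_k(x\varpi_p)\,d\mu$, bounds $|c_k|\le C\Vert P_k(x\varpi_p)\Vert_N\le C\,p^{-\frac{1}{p-1}+\frac{k}{eq^N(q-1)}}$ via Proposition \ref{proposition; p_n}, and lets $N$ grow --- exactly as in the paper. You diverge in three sub-steps, all legitimately. First, the paper obtains $P_k(\partial_{\mathcal{G}})\varphi(t)|_{t=0}=\frac{1}{k!}\partial^k\varphi(t)|_{t=0}$ by citing formula 6 of Lemma 4.2 of \cite{ST}, applies it only to the monomials $\varphi_n(t)=t^n$ to get $\int P_k\,d\mu_{\varphi_n}=\delta_{kn}$, and then uses linearity in $\varphi$; you instead reprove the identity for general $\varphi$ by the translation-operator computation $\exp(\lambda(u)\partial_{\mathcal{G}})\varphi(t)|_{t=0}=\varphi(u)$, a correct formal Taylor argument that makes this step self-contained rather than cited. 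Second, to conclude $\mu=\mu_\varphi$ in the converse, the paper expresses $1_{a+\pi^N\cO_K}(x)(x-a)^n$ as a torsion-point sum of exponentials, matches the two integrals on these coset monomials, and uses that they topologically generate $LA_N(\cO_K,\bbC_p)$; you instead use density of the span of the $P_n(x\varpi_p)$, which requires the Schneider--Teitelbaum basis theorem quoted in Section 2 (you also lean on that theorem to justify the term-by-term integration in the forward direction). This is not circular, since the basis theorem is proved in \cite{ST} independently of the present construction, but the paper's route is more self-contained --- a point with some weight here, because the paper later re-proves a refined version of that very basis theorem (Theorem \ref{theorem; mahler L_N}) as a consequence of the present theorem; note that in both places where you invoke it, the upper bound of Proposition \ref{proposition; p_n} would suffice, since it gives $\Vert P_n(x\varpi_p)\Vert_N\,|t|^n\to 0$ for $N$ large. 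Third, you explicitly verify that convolution maps to the product of power series and that $1$ corresponds to the Dirac distribution at $0$; the paper asserts the algebra isomorphism without writing this out, so your verification supplies a detail the paper leaves implicit.
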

\begin{proof}
We have already shown the first assertion. 
For a given $\mu$, we put 
$$c_k:=\int_{\cO_K} \; P_k(x \varpi_p) \;d\mu.$$
Since the  distribution is a continuous linear  operator on the 
Banach space $LA_N(\cO_K, \bbC_p)$ for every natural number $N$,  
there exists a positive constant $C$ depending only on $\mu$ and $N$  
such that 
$$|c_k| = \left| \int_{\cO_K} \; P_k(x \varpi_p) \;d\mu\right| \leq C
 \Vert P_k(x \varpi_p)\Vert_{N} \leq C p^{-\frac{1}{p-1}+\frac{k}{eq^N(q-1)}}$$
where for the last inequality, we used Proposition \ref{prop, gamma estimate} and 
Proposition \ref{proposition; p_n}. 
Hence for any $0 \leq r <1$, if we choose sufficiently large $N$, 
we have $|c_k| r^k \rightarrow 0$ when $k \rightarrow \infty$. 
Hence $\varphi(t)=\sum_{k=0}^\infty c_k t^k$ is a rigid analytic function 
on the open unit disc. Then by construction 
$$\varphi(t)=\int_{\cO_K}  \exp(x \varpi_p\lambda(t)) d\mu.$$
Since the function  $(x-a)|_{a+\pi^N\cO_K}$ is  given by 
$$\frac{1}{q^{N}\varpi_p^n}\partial^n_{\mathcal{G}} 
 \left(\sum_{t_N \in \mathcal{G}[\pi^N]} \exp((x-a) \varpi_p\lambda(t))|_{t=t \oplus t_N}\right)|_{t=0},$$
we have  
$$\int_{a+\pi^N\cO_K} \,(x-a)^n d\mu=\frac{1}{q^N\varpi_p^n} 
\partial^n_{\mathcal{G}} \sum_{t_N  \in \mathcal{G}[\pi^N]}\, 
\varphi_a(t\oplus t_N)|_{t=0}=\int_{a+\pi^N\cO_K} \,(x-a)^n d\mu_\varphi.$$
Since $\pi^{-nN}(x-a)^n|_{a+\pi^N\cO_K}$ for $a \in \cO_K$ and $n=0,1, \cdots$ 
are topological generators of $LA_n(\cO_K, \bbC_p)$, 
we have 
$$\int_{\cO_K} f(x) d\mu=\int_{\cO_K} f(x) d\mu_\varphi$$
for all $f \in LA_N(\cO_K, \bbC_p)$. Hence $\mu=\mu_\varphi$. 
\end{proof}

Now we prove Theorem 1.2.

\begin{theorem}\label{theorem; mahler L_N}
i) The series $\sum_{n=0}^\infty a_n P_n(x \varpi_p)$ converges to 
an element of  $LA_N(\cO_{K}, {\bbC_p})_{0}$ for $a_n$  satisfying
\begin{align*}
	|a_n| & \leq \underline{\rho}\left(\left[ \frac{n}{q^N}\right]\right), &
	\lim_{n \rightarrow 0} &|a_n|/\underline{\rho}\left(\left[ \frac{n}{q^N}\right]\right)= 0.
\end{align*} 
ii) If  $f(x) \in LA_N(\cO_{K}, \mathbb{C}_p)_{0}$, then it has 
an expansion 
$$f(x)=\sum_{n=0}^\infty a_n P_n(x \varpi_p)$$ 
of the form 
\begin{align*}
	|a_n| & \leq c \left |\frac{\pi}{q} \right |^N  \overline{\rho}\left(\left[ \frac{n}{q^N}\right]\right), &
	\lim_{n \rightarrow 0} &|a_n|/\overline{\rho}\left(\left[ \frac{n}{q^N}\right]\right)= 0,
\end{align*}
where $ c=1$ if $e \leq p-1$,  and  $c =\overline{\rho}(0) $, otherwise. 
\end{theorem}
\begin{proof}
i) follows from Proposition \ref{proposition; p_n}. For ii), 
we proceed as in the proof of Theorem 4.7 of \cite{ST} except the estimate of 
the Mahler coefficients. 
We put  $$a_n:=\int_{\cO_K} f(x) \;d\mu_{\varphi_n}.$$
Then by Theorem \ref{theorem; main},  we have 
$$
|a_n|=\left|\int_{\cO_K} \; f(x) \;d\mu_{\varphi_n} \right| \leq c \left |\frac{\pi}{q} \right |^N
\overline{\rho}\left(\left[ \frac{n}{q^N}\right]\right). 
$$
We next prove the limit in ii).
We may assume that $f(x)=\sum_{i=0}^\infty c_i (x-a)^i$ on 
$a+\pi^N \cO_{K}$ and $f(x)=0$   outside of $a+\pi^N \cO_{K}$. 
For a given $\epsilon>0$, we can take $N_0$ so that 
$$\Vert \sum_{i=N_0}^\infty c_i (x-a)^i \Vert_{a,N}<\epsilon.$$
Hence by  (\ref{equation; STint-estimate3}), we have 
\begin{align}\label{equation; c_1}
\left|\int_{a+\pi^N\cO_K} \; \sum_{i=N_0}^\infty c_i (x-a)^i \;d\mu_{\varphi_n} \right| \; \leq \; 
\epsilon\; C_1  \overline{\rho}\left(\left[\frac{n}{q^N}\right] \right)  
\end{align}
where $C_1$ is a positive constant independent of $n$. 
On the other hand, also by (\ref{equation; STint-estimate3}),  we have 
\begin{align}\label{equation; c_2}
\left|\int_{a+\pi^N\cO_K} \; 
\sum_{i=0}^{N_0} c_i (x-a)^i
 \;d\mu_{\varphi_n} \right| \; \leq \;C_2 p^{-\frac{n_0}{e} \left(1-\frac{1}{q-1}\right)} 
  \overline{\rho}\left(\left[\frac{n}{q^N}\right] \right) 
\end{align}
where $n_0=\max\{[n/q^N]-N_0, 0\}$ and $C_2$ is a positive constant independent of $n$. 
Hence we have 
\begin{align}\label{equation; c_3}
\left|\int_{a+\pi^N\cO_K} \; f(x)\;d\mu_{\varphi_n} \right| \; \leq \; 
\epsilon\; C_1  \overline{\rho}\left(\left[\frac{n}{q^N}\right] \right) 
\end{align}
for sufficiently large $n$. 
Hence we have $|a_n|/\overline{\rho}\left(\left[ \frac{n}{q^N}\right]\right) \rightarrow 0$ when $n \rightarrow \infty$.
Then by i),  the series $\sum_{k=0}^\infty a_n P_n(x\varpi_p)$ converges to 
a function in $LA_N(\cO_K, \bbC_p)$. 
We put 
$$g(x)=f(x)-\sum_{k=0}^\infty a_n P_n(x\varpi_p).$$ Then we have 
$\int_{\cO_K}  g(x) d\mu_{\varphi_n}=0$ for all $n$, and 
hence $\int_{\cO_K}  g(x) d\mu=0$ for all distribution $\mu$. 
Considering the Dirac distribution $\delta_a: h \mapsto h(a)$, 
we have  $g(a)=0$ for any $a$. Hence 
$f(x)=\sum_{n=0}^\infty a_n P_n(x \varpi_p)$. 
\end{proof}

\begin{corollary}\label{corollary; basis}
Suppose
$$e_{N,n}=\underline{\gamma}\left(\left[\frac{n}{q^N}\right] \right) P_n(x \varpi_p), \qquad (n=0,1, \cdots),$$  
where $\underline{\gamma}(u)$ is an element in $\bbC_p$ satisfying $\underline{\rho}(u)=|\underline{\gamma}(u)|$.
If $L_N$ is the $\cO_{\bbC_p}$-module topologically generated by 
$e_{N,n}$, then 
$$\overline{\rho}(0)^{-2}\left |\frac{q}{\pi} \right |^N LA_N(\cO_K, {\bbC_p})_{0}\;\subset\; L_N
\; \subset \; LA_N(\cO_K, {\bbC_p})_{0}.$$
\end{corollary}

In particular, the functions $e_n$ form a topological basis of 
the Banach space  $LA_N(\cO_K, {\bbC_p})$. 
Moreover, if $e \leq p-1$, then 
$$\left |\frac{q}{\pi} \right |^{N+1} LA_N(\cO_K, {\bbC_p})_{0}\;\subset\; L_N
\; \subset \; LA_N(\cO_K, {\bbC_p})_{0}.$$
In addition, if $\cO_K=\bbZ_p$, we recover Amice's result, namely,  
$$\left[\frac{n}{p^N}\right]! \binom{x}{n}$$
 for $n=0,1, \cdots$ form 
a topological basis of  $LA_N(\bbZ_p, {\bbC_p})_{0}$.

\section{Relations to Katz's and Chellali's results.}

As an application, we reprove Katz's and Chellali's results (\cite{Ch}, \cite{Ka2}) 
by using the $p$-adic Fourier theory. 

First we recall results of Katz \cite{Ka2} and Chellali \cite{Ch}. 
Let $E$ be an elliptic curve with complex multiplication by 
the ring of integer $\cO_{\bsK}$ of an imaginary quadratic field $\bsK$. 
For simplicity, we assume that $E$ is defined over $\bsK$ 
 and fix a 
Weierstrass model
$$y^2=4x^3-g_2x-g_3, \qquad g_2, g_3 \in \cO_\bsK$$
of $E/\bsK$. 
Let $p$ be an odd prime. 
We assume that $p$  is inert  in $\bsK$ and does not divide 
the discriminant of the above Weierstrass model, or equivalently, 
$E$ has good {\it supersingular} reduction at $p$. 
Then 
the Bernoulli-Hurwitz number ${BH}(n)$ is defined by 
$$
\wp(z)=\frac{1}{z^2}+\sum_{n \geq 2} \; \frac{{BH(n+2)}}{n+2} 
\frac{z^n}{n!},
$$
where $\wp(z)$ is the Weierstrass $\wp$-function for the model. 
Let $\epsilon$ be a root of unity in $\cO_\bsK$ such that 
the multiplication by $-\epsilon p$ gives the Frobenius 
$(x, y) \mapsto (x^{p^2}, y^{p^2})$ of $E \mod p$. 
Let $\gamma$ be a unit in the Witt ring $W(\overline{\mathbb{F}}_{p})$ such that 
$$\gamma^{p^2-1}=-\epsilon^{-1} \frac{p^2!}{p^{p+1}(p^2-1)}.$$
For a fixed $b \in \cO_K$ prime to $p$, we put 
$$L(n)=\frac{(1-b^{n+2}) (1-p^n)}{\gamma^n p^{[np/(p^2-1)]}}\frac{BH(n+2)}{n+2}.$$

\begin{theorem}[Katz \cite{Ka2}] \label{theorem; Ka2} 
The number $L(n)$ is integral. 
Let $l$ and $n$ be non-negative integers. Then 
$$L(n+p^l(p^2-1)) \equiv L(n) \quad \mod p^{l}.$$
\end{theorem}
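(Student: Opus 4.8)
The plan is to realize $L(n)$, up to an explicit bounded normalizing factor, as the $n$-th moment $\int_{\cO_K^\times} x^n\,d\mu$ of a $p$-adic measure manufactured from the Weierstrass $\wp$-function, and then to read off both assertions from the integral estimates of Theorem \ref{theorem; main}. Throughout I use that $p$ inert in $\bsK$ makes $K=\bsK_p$ the unramified quadratic extension of $\bbQ_p$, so that $e=1$, $q=p^2$, $\pi=p$, and in particular $e\leq p-1$; the relevant formal group $\mathcal{G}$ is the formal group of $E$, whose logarithm $\lambda$ is the restriction of the elliptic logarithm $z$. The key computational input is that on a series of shape $G(\lambda(t))$ the operator $\partial_{\mathcal{G}}=\lambda'(t)^{-1}(d/dt)$ acts simply as $d/dz$, so by Proposition \ref{proposition; estimate} ii) the distribution $\mu_\varphi$ attached to $\varphi(t)=G(\lambda(t))$ has moments $\int_{\cO_K} x^n\,d\mu_\varphi=\varpi_p^{-n}G^{(n)}(0)$.

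First I would form $G(z)=\wp(z)-b^2\wp(bz)$: the double poles cancel, the expansion begins at $z^2$, and the coefficient of $z^n/n!$ is $(1-b^{n+2})BH(n+2)/(n+2)$. This single device both regularizes $\wp$ into a rigid analytic $\varphi(t)=G(\lambda(t))$ on the open unit disc and inserts the Euler factor $(1-b^{n+2})$, yielding $\int_{\cO_K}x^n\,d\mu_\varphi=\varpi_p^{-n}(1-b^{n+2})BH(n+2)/(n+2)$. Restricting the measure to the units $\cO_K^\times$, i.e. subtracting the part supported on $p\cO_K$, which by the $[p]$-action on $\mathcal{G}$ (equivalently the supersingular Frobenius $-\epsilon p$) contributes $p^n$ times the full integral, introduces the second Euler factor and brings in the normalizing unit $\gamma$; one gets $\int_{\cO_K^\times}x^n\,d\mu_\varphi=(1-p^n)(1-b^{n+2})\varpi_p^{-n}BH(n+2)/(n+2)$. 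Comparing with the definition of $L(n)$ gives $L(n)=u_n\int_{\cO_K^\times}x^n\,d\mu_\varphi$ with $u_n=\varpi_p^{\,n}\gamma^{-n}p^{-[np/(p^2-1)]}$. Since $|\varpi_p|=p^{-s}$ with $s=\tfrac1{p-1}-\tfrac1{q-1}=\tfrac{p}{p^2-1}$, one has $v_p(\varpi_p^{\,n})=np/(p^2-1)$, so $u_n$ has valuation equal to the fractional part of $np/(p^2-1)$ and is in particular bounded. Assertion i), integrality, then follows from the boundedness of the restricted measure, which is exactly what Theorem \ref{theorem; main} (and Corollary \ref{corollary; main}) supplies, together with the algebraicity of $BH(n)$, placing $L(n)$ in $\cO_K$.

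For the congruence take $m=n+p^l(q-1)$. For a unit $x\in\cO_K^\times$ one has $x^{q-1}\equiv1\pmod p$, whence $x^{m-n}=(x^{q-1})^{p^l}\equiv1\pmod{p^{l+1}}$ and $x^m-x^n\in p^{l+1}\cO_K$ pointwise on $\cO_K^\times$; likewise the ratio of normalizing factors $u_m/u_n=(\varpi_p^{\,q-1}\gamma^{-(q-1)}p^{-p})^{p^l}$, being the $p^l$-th power of a principal unit, satisfies $u_m/u_n\equiv1\pmod{p^{l+1}}$. Writing $L(m)-L(n)=u_n\bigl((u_m/u_n)(I_m-I_n)+((u_m/u_n)-1)I_n\bigr)$ with $I_n=\int_{\cO_K^\times}x^n\,d\mu_\varphi$ bounded, and estimating $I_m-I_n=\int_{\cO_K^\times}(x^m-x^n)\,d\mu_\varphi$ by Theorem \ref{theorem; main}, both summands are divisible by at least $p^l$, giving $L(m)\equiv L(n)\pmod{p^l}$. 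The hardest step is the bookkeeping of the two normalizations: identifying the Fourier-theoretic period $\varpi_p$ with Katz's $\gamma$ (through the Frobenius $-\epsilon p$) precisely enough that $\varpi_p^{\,q-1}\gamma^{-(q-1)}p^{-p}$ is genuinely a principal unit, and controlling the $n$-dependent exponent $[np/(p^2-1)]$ so that the depleted distribution on $\cO_K^\times$ is literally a bounded measure with the expected moments; this is where Proposition \ref{proposition; power estimate} and the sharp $e\leq p-1$ estimates are indispensable.
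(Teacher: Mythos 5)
Your proposal follows the same architecture as the paper's proof: realize $c(n)=(1-p^n)(1-b^{n+2})\,BH(n+2)/(n+2)$ as $\varpi_p^n\int_{\cO_K^\times}x^n\,d\mu_\phi$ for $\phi(t)=\wp_b(z)|_{z=\lambda(t)}$ (your $G(z)=\wp(z)-b^2\wp(bz)$ is exactly the paper's $\wp_b$), restrict to units via the distribution relation to create $(1-p^n)$, compare the $\varpi_p$-normalization with Katz's $\gamma$-normalization through the unit $u$ with $\varpi_p^{q-1}=p^pu^{q-1}$, and conclude. However, two quantitative gaps break the argument as written. First, every estimate you rely on requires $\phi$ to be an \emph{integral} power series, not merely rigid analytic on the open unit disc: for general rigid $\varphi$ the bounds of Theorem \ref{theorem; main} carry the factor $\Vert\varphi\Vert_N$, which is unbounded, and no integrality of $L(n)$ would follow. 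The paper quotes the nontrivial CM-theoretic fact that $\phi(t)\in\cO[[t]]$; your proposal never secures it. (The congruence $(u/\gamma)^{q-1}\equiv 1\bmod p$, which you flag as the ``hardest step'' but do not prove, is handled in Lemma \ref{lemma; ugamma} by inspecting the coefficient of $t^q$ in the integral series $\exp(\varpi_p\lambda(t))$.)

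Second, and more seriously, your congruence step loses a power of $p$ that cannot be recovered by your method. From the sup-norm bound $\Vert x^m-x^n\Vert_{a,1}\le p^{-(l+1)}$, the best that (\ref{equation; STint-estimate 2}) gives (with $N=1$, $\phi$ integral) is
$$\left|\int_{a+\pi\cO_K}(x^m-x^n)\,d\mu_\phi\right|\;\le\;\left|\frac{\pi}{q}\right|\,\Vert x^m-x^n\Vert_{a,1}\,\Vert\phi\Vert_1\;\le\;p\cdot p^{-(l+1)}\cdot p\;=\;p^{1-l},$$
since $|\pi/q|=p$ and $\Vert\phi\Vert_1\le|\overline{\gamma}(0)|=p$ here; this yields only $L(m)\equiv L(n)\bmod p^{l-1}$, not $\bmod\ p^l$. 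The paper's Proposition \ref{proposition; moment congruence} ii) avoids this loss by expanding $x^m-x^n=p^l\bigl(c_0+c_1(x-a)+c_2\tfrac{(x-a)^2}{2}+\cdots\bigr)$ with integers $c_i$, $p\mid c_0$, and estimating term by term with the degree-sensitive constant $|\overline{\gamma}[0,d]|$ of (\ref{equation; STint-estimate}): the constant term has an extra $p$, the degree-one term costs only $|\overline{\gamma}[0,1]|=p^{p/(q-1)}$, and the higher terms are damped by $\Vert(x-a)^i/i\Vert_{a,1}\le p^{-2}$. This gives the bound $p^{-l+p/(q-1)}$, strictly better than $p^{-l+1}$, and the final passage to $p^{-l}$ uses that $L(m)-L(n)$ lies in the unramified field $K$ and hence has integer valuation --- a rationality upgrade your write-up never invokes but cannot do without. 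The same two refinements are needed for integrality when $(q-1)\mid n$: there $|u_n|=1$, so your bound gives only $|L(n)|\le p$, which allows a simple pole; the paper instead uses $\int_{\cO_K^\times}d\mu_\phi=0$ together with the $l=0$ case of the refined congruence (Corollary \ref{corollary; c(n)} i)) to get $|L(n)|\le p^{p/(q-1)}<p$, and then rationality.
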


Later, Chellali \cite{Ch} refined the congruences as follows. 

\begin{theorem}[Chellali \cite{Ch}] \label{theorem; Che}
Let $l$ and $n$ be non-negative integers. If $n \not\equiv 0 \mod p^2-1$, we have 
$$L(n+p^l(p^2-1)) \equiv L(n) \quad \mod p^{l+1}.$$
If $n \equiv 0 \mod p^2-1$ and  $n \not=0$, 
put $L'(n)=L(n)/n$, then 
$$L'(n+p^l(p^2-1)) \equiv L'(n) \quad \mod p^{l+1}.$$
\end{theorem}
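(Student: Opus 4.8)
The plan is to route everything through the $p$-adic Fourier theory of the previous sections and to read the congruences off the moments of an explicit distribution. Since $p$ is inert, $K=\bsK_p$ is the unramified quadratic extension of $\bbQ_p$, so $\pi=p$, $q=p^2$, $e=1\le p-1$, and over $\cO_{\bbC_p}$ the formal group $\widehat{E}$ is the height-two Lubin--Tate group $\mathcal{G}$, its logarithm $\lambda$ is the elliptic integral $z=\lambda(t)$, and the invariant derivation is $\partial_{\mathcal{G}}=d/dz$. First I would kill the pole of the Weierstrass function by regularizing: the series $\varphi(t):=\bigl(\wp(z)-b^2\wp(bz)\bigr)\big|_{z=\lambda(t)}$ is holomorphic at $t=0$, since $\wp(z)-b^2\wp(bz)=\sum_{n\ge2}(1-b^{n+2})\tfrac{BH(n+2)}{n+2}\tfrac{z^n}{n!}$ has no pole, and hence defines a rigid analytic function on the open unit disc. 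Feeding $\mu_\varphi$ into the moment formula of Proposition \ref{proposition; estimate} ii) yields
\begin{equation*}
\int_{\cO_K}x^n\,d\mu_\varphi=\bigl(\varpi_p^{-1}\partial_{\mathcal{G}}\bigr)^n\varphi(t)\big|_{t=0}=\varpi_p^{-n}\,(1-b^{n+2})\,\frac{BH(n+2)}{n+2}.
\end{equation*}
Restricting $\mu_\varphi$ to $\cO_K^\times$ (equivalently subtracting its push-forward under $x\mapsto px$) produces the Euler factor $1-p^n$, so with $\nu:=\mu_\varphi|_{\cO_K^\times}$ and $M(n):=\int_{\cO_K^\times}x^n\,d\nu$ one has $L(n)=w_n\,M(n)$, where $w_n:=\gamma^{-n}p^{-[np/(p^2-1)]}\varpi_p^{\,n}$.

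The integrality statement $L(n)\in\cO_K$ is then a denominator estimate. Since $|\varpi_p|=p^{-p/(p^2-1)}$, one has $|w_n|=p^{-\{np/(p^2-1)\}}$, so $|L(n)|\le 1$ is equivalent to the bound $|M(n)|\le p^{\{np/(p^2-1)\}}$ on the moments of $\nu$. This is exactly what the main integral estimate of Theorem \ref{theorem; main} (in the $e\le p-1$ case) supplies when applied to the polynomials $x^n$, once combined with the sharp factorial/period bookkeeping of Proposition \ref{proposition; factorial/varpi}. I would carry out this step first, since it also records the precise size of the denominators of $\nu$ that the congruence step must survive.

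For the congruences I would use the elementary fact that for $x\in\cO_K^\times$ and $m\equiv n\bmod p^l(q-1)$ one has $x^m\equiv x^n\bmod p^{l+1}$: writing $x=\omega(x)\langle x\rangle$ with $\omega(x)^{q-1}=1$ and $\langle x\rangle\in 1+p\cO_K$, the hypothesis $e=1\le p-1$ gives $\langle x\rangle^{p^l}\equiv1\bmod p^{l+1}$, hence $x^{m-n}\equiv1$. The normalizing factor is controlled in parallel: the explicit choice of the unit $\gamma$ is arranged precisely so that $w_m\equiv w_n\bmod p^{l+1}$. Integrating $x^m-x^n$ against $\nu$ and loosing one power of $p$ to the denominator of $\nu$ recovers Katz's congruence modulo $p^{l}$. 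To sharpen this to $p^{l+1}$ when $n\not\equiv0\bmod q-1$, I would exploit that the Teichm\"uller factor $\omega(x)^n$ is then a non-trivial character of $\mathbb{F}_q^\times$: integration against $\nu$ lands in the corresponding isotypic component, which the estimates of Theorem \ref{theorem; main} show to carry no denominator, restoring the missing power of $p$.

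The main obstacle is the trivial-zero case $q-1\mid n$. Here $\omega(x)^n\equiv1$, so one works on the principal units, and the Euler factor $1-p^n$ forces the total mass $\nu(\cO_K^\times)$ (the value at $n=0$) to vanish. Expanding $x^n=\langle x\rangle^n=\exp(n\log\langle x\rangle)=\sum_{j\ge0}\tfrac{n^j}{j!}\int(\log\langle x\rangle)^j\,d\nu$ then has a vanishing constant term, so $M(n)$ is divisible by $n$ and the genuinely interpolated object is $L'(n)=L(n)/n=\int_{\cO_K^\times}\tfrac{\exp(n\log\langle x\rangle)-1}{n}\,d\nu$. Comparing $L'(m)$ and $L'(n)$ term by term, the $j=1$ contribution is independent of $n$ and cancels, while for $j\ge2$ the factor $(\log\langle x\rangle)^j\in p^j\cO_{\bbC_p}$ together with $m^{j-1}-n^{j-1}$ divisible by $m-n=p^l(q-1)s$ supplies the extra power of $p$, giving $L'(m)\equiv L'(n)\bmod p^{l+1}$. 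Making this divisibility bookkeeping precise, so that it survives the denominators of $\nu$ measured in the integrality step, is the delicate point, and it is exactly here that the sharp form of Theorem \ref{theorem; main} and Proposition \ref{proposition; factorial/varpi} is indispensable.
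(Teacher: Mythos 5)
Your global framework is the paper's: attach a distribution to the $b$-regularized Weierstrass function, read $L(n)$ off its moments over $\cO_K^\times$, control the factor $\gamma^{-n}p^{-[np/(q-1)]}\varpi_p^{n}$ by a unit congruence (the paper's Lemma \ref{lemma; ugamma}), and treat the case $q-1\mid n$ by expanding $\langle x\rangle^{n}=\exp(n\log\langle x\rangle)$ and comparing term by term, which is essentially Proposition \ref{proposition; moment congruence} iii). The genuine gap is in the other case, $n\not\equiv 0\bmod q-1$, i.e.\ the first assertion of the theorem. Your mechanism there is the claim that the projection of $\nu$ to the non-trivial Teichm\"uller isotypic component $\omega^{n}$ ``carries no denominator'', and that this follows from Theorem \ref{theorem; main}. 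It cannot: the estimates of Theorem \ref{theorem; main} bound $\int_{a+\pi^N\cO_K}f\,d\mu_\varphi$ by $\Vert f\Vert_{a,N}\Vert\varphi\Vert_N$ uniformly in $f$ and are blind to any character decomposition. Moreover, the $\omega^{n}$-isotypic projection of $\nu$ is $\frac{1}{q-1}\sum_{\zeta}\omega^{-n}(\zeta)\,[\zeta]_*\nu$, and since $[\zeta]_*\mu_\varphi=\mu_{\varphi\circ[\zeta]}$, this projection is again the distribution of an \emph{integral} power series; so it satisfies exactly the same bounds as $\nu$ itself (denominators up to $|\pi/q|^{-N}$), and nothing improves. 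Indeed the claim is false: if the non-trivial components had norm $\leq 1$, then $|M(n)|\leq \Vert x^n\Vert_{\sup}=1$ for every $n\not\equiv 0\bmod q-1$, hence $|L(n)|=p^{-\{np/(q-1)\}}|M(n)|<1$, and $K$-rationality would force $L(n)\equiv 0\bmod p$ for all such $n$ --- incompatible with the sharpness of Katz's normalization $p^{[np/(q-1)]}$ and rendering the congruence you are trying to prove largely vacuous.

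What the paper actually uses here is an object absent from your proposal: a second distribution $\mu_\psi$ attached to the doubly regularized Weierstrass \emph{zeta} function $\zeta_{b,c}(z)=(1-b[b]^*)(c-[c]^*)\zeta(z)$, again an integral power series in $t$, with moments $\varpi_p^{n}\int_{\cO_K^\times}x^{n}\,d\mu_\psi=(1-p^{n-1})(c-c^{n})(1-b^{n+1})B(n-1)$. Applying the moment congruence (Proposition \ref{proposition; moment congruence} ii)) to $\mu_\psi$ at the shifted exponents $m+1$, $n+1$ yields a congruence for $(c-c^{n+1})\,c(n)/\varpi_p^{n+1}$: the factor $c-c^{n+1}$ is a unit precisely because $n\not\equiv 0\bmod q-1$ (take $c$ generating $(\cO_K/p)^\times$), and the shift by one contributes the extra factor $\varpi_p$, whose size $p^{-p/(q-1)}$ exactly absorbs the slack $p^{+p/(q-1)}$ in the estimate, giving the congruence for $c(n)/\varpi_p^{n}$ modulo $p^{l}$ on the nose (Corollary \ref{corollary; c(n)}); combined with $|c_r|<1$ for $r\neq 0$ and the rounding step (a congruence modulo a non-integral power of $p$ between elements of the unramified field $K$ improves to the next integral power) this gives $\bmod\ p^{l+1}$. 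This zeta-side input and the systematic rationality rounding are what your argument is missing; without them it stops at Katz's $\bmod\ p^{l}$. Two smaller points: you need $\phi(t)$ to be an \emph{integral} power series, not merely rigid analytic (all moment estimates use $|c_k|\leq 1$); and your claimed bound $|M(n)|\leq p^{\{np/(q-1)\}}$ is not what Theorem \ref{theorem; main} supplies --- for $q-1\mid n$ it would require $|M(n)|\leq 1$, whereas the estimates only give $|M(n)|\leq p^{p/(q-1)}$, the deficit again being covered by rationality, not by the analytic estimates.
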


In the following, let $K$ be the unramified quadratic extension of $\bbQ_p$ and  
let $\mathcal{G}$ be 
 the Lubin-Tate group  of  height $h=2$ associated to the uniformizer $\pi=-\epsilon p$. 
We assume that $[\pi]T=\pi T+T^q$ for $q=p^2$ is an endomorphism of $\mathcal{G}$. 
It is known that the formal group of $E$ at $p$ is isomorphic to $\mathcal{G}$.

\begin{proposition}\label{proposition; moment congruence} 
Let $\varphi$ be an integral power series  and 
let $\mu_\varphi$ be the corresponding distribution associated to 
$\varphi$. \\
i) We have 
\begin{equation*}
\left| \int_{\cO^\times_K} \; x^n \; d\mu_\varphi \right|
 \leq p.
\end{equation*}
ii) If $m \equiv n \mod p^l(q-1)$, then 
\begin{equation*}
\left| \int_{\cO^\times_K} \; (x^m-x^n) \; d\mu_\varphi \right|
 \leq 
p^{-l+\frac{p}{q-1}}. 
\end{equation*}
iii) If $(q-1)|n$ and $m \equiv n \mod p^l(q-1)$, then 
\begin{equation*}
\left| \int_{\cO^\times_K} \; \left( \frac{x^m-1}{m}-\frac{x^n-1}{n} \right) \; d\mu_\varphi \right|
 \leq 
p^{-l-1+\frac{2 p}{q-1}}.  
\end{equation*}
\end{proposition}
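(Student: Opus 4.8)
The plan is to reduce everything to the decomposition $\cO_K=\cO_K^\times\sqcup\pi\cO_K$ and to exploit a cancellation that is invisible to a naive application of Theorem~\ref{theorem; main}. First I record the numerics of the present situation: since $K$ is unramified of degree $2$ we have $e=1\le p-1$ and $q=p^2$, so the refined ($e\le p-1$) estimates of Theorem~\ref{theorem; main} and of Propositions~\ref{proposition; power estimate} and~\ref{lemma; lambda exp coefficients} all apply; moreover $|\pi|=p^{-1}$, $|q|=p^{-2}$, $|\varpi_p|=p^{-p/(q-1)}$, $|\overline{\gamma}(0)|=|\pi/q|=p$, and every nonzero $t_1\in\mathcal{G}[\pi]$ satisfies $|t_1|=p^{-1/(q-1)}$. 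By linearity it suffices to treat the monomials $x^n$ for i) and the explicit differences for ii) and iii).

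The key computation is as follows. Writing $\Psi_n:=\partial_{\mathcal{G}}^n\varphi$ and using the defining formula for the integral together with the translation invariance $\partial_{\mathcal{G}}^n\bigl(\varphi(t\oplus t_1)\bigr)|_{t=0}=\Psi_n(t_1)$, I obtain $\int_{\cO_K}x^n\,d\mu_\varphi=\varpi_p^{-n}\Psi_n(0)$ from Proposition~\ref{proposition; estimate}(ii), and $\int_{\pi\cO_K}x^n\,d\mu_\varphi=\frac{1}{q\varpi_p^n}\sum_{t_1\in\mathcal{G}[\pi]}\Psi_n(t_1)$ from the definition. Since $|\mathcal{G}[\pi]|=q$, subtraction gives
\[
\int_{\cO_K^\times}x^n\,d\mu_\varphi
=-\frac{\varpi_p^{-n}}{q}\sum_{t_1\in\mathcal{G}[\pi]}\bigl(\Psi_n(t_1)-\Psi_n(0)\bigr)
=-\frac{\varpi_p^{-n}}{q}\sum_{j\ge 1}\bigl([t^j]\Psi_n\bigr)\Bigl(\sum_{t_1\in\mathcal{G}[\pi]}t_1^{\,j}\Bigr).
\]
The decisive point is that the constant term $\Psi_n(0)$ — that is, the full integral over $\cO_K$, which by Propositions~\ref{proposition; estimate}(ii) and~\ref{lemma; lambda exp coefficients} already has size $|\overline{\gamma}[0,n]|\le p$ — cancels, leaving only the positive-degree part. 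Each power sum $\sum_{t_1}t_1^{\,j}$ with $j\ge1$ is divisible by $\pi$ by Lemma~\ref{lemma; coleman}, so $|\sum_{t_1}t_1^{\,j}|\le|\pi|=p^{-1}$, and this extra factor of $\pi$ is exactly what turns the naive bound $p^2$ into the asserted $p$. Concretely, i) follows once one proves $|[t^j]\partial_{\mathcal{G}}^n\varphi|\cdot|\sum_{t_1}t_1^{\,j}|\le|\pi\varpi_p^{\,n}|$ for all $j\ge1$, a consequence of the coefficient bounds of Proposition~\ref{lemma; lambda exp coefficients} together with the torsion valuation $|t_1^{\,j}|=p^{-j/(q-1)}$; this is in essence Proposition~\ref{proposition; power estimate} applied to the power sums.

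For ii) and iii) I feed the same cancellation identity the extra input that the integrand is small on each unit residue disc. If $m\equiv n\bmod p^l(q-1)$ then $q-1\mid m-n$, so for a unit $a$ with Teichm\"uller representative $\omega$ one has $\omega^{m-n}=1$; writing $x=\omega(1+\pi v)$ gives $x^m-x^n=\omega^n(1+\pi v)^n\bigl((1+\pi v)^{m-n}-1\bigr)$ with $v_p\bigl(\binom{m-n}{i}\pi^i\bigr)\ge l+1$, whence $\Vert x^m-x^n\Vert_{a,1}\le p^{-(l+1)}$. Combining this vanishing with the cancellation identity yields ii), the exponents $p/(q-1)=v_p(\varpi_p)$ and (in iii) $2p/(q-1)$ emerging from the interplay of the $\varpi_p^{-m},\varpi_p^{-n}$ prefactors with the divisibility of the integrand. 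For iii) the new feature is the denominators $m,n$: here I use $(q-1)\mid n$ so that $x^n-1$ and $x^m-1$ already vanish modulo $\pi$ on $\cO_K^\times$, together with the sharper divisibility $v_p\bigl(x^{(q-1)p^v}-1\bigr)\ge v+1$, which shows that $\frac{x^n-1}{n}$ and its difference with $\frac{x^m-1}{m}$ remain $p$-integrally controlled despite the division; a logarithm-type Taylor expansion of $\frac{x^n-1}{n}$ and $\frac{x^m-1}{m}$ organizes this and produces the doubled exponent and the extra factor $p^{-1}$.

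The main obstacle is the sharp bookkeeping in the cancellation step. One must weigh the $\varpi_p$-powers in the coefficients $[t^j]\partial_{\mathcal{G}}^n\varphi$ against the valuations $|t_1^{\,j}|=p^{-j/(q-1)}$, and because $q=p^2$ these two nearly balance, so the gain of the decisive single power of $\pi$ (the $p^2\to p$ collapse in i, and the analogous collapses in ii and iii) rests on the precise inequalities of Propositions~\ref{lemma; lambda exp coefficients} and~\ref{proposition; power estimate} rather than on any soft estimate. In iii) there is the further difficulty that $m$ and $n$ carry positive $p$-adic valuation in the denominator, so the argument must show that the divided functions stay integral using $(q-1)\mid n$; this is precisely the step that reproduces the integrality of $L(n)$ underlying the Katz--Chellali congruences.
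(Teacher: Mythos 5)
Your opening moves are sound and genuinely different from the paper: you split $\int_{\cO_K^\times}=\int_{\cO_K}-\int_{\pi\cO_K}$, and your bound $\bigl|\int_{\cO_K}x^n\,d\mu_\varphi\bigr|=\bigl|\varpi_p^{-n}\partial_{\mathcal{G}}^n\varphi(t)|_{t=0}\bigr|\le|\overline{\gamma}[0,n]|\le p$ via Proposition \ref{proposition; estimate} ii) and Proposition \ref{lemma; lambda exp coefficients} is correct, whereas the paper never leaves the residue discs: it expands $x^n=a^n+\sum_{k\ge1}\binom{n}{k}(x-a)^ka^{n-k}$ on each $a+\pi\cO_K$ and applies (\ref{equation; STint-estimate}) to the degree-$0$ part and (\ref{equation; STint-estimate4}) to the rest. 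The genuine gap is your treatment of $\int_{\pi\cO_K}$: you reduce i) to the term-by-term inequality $\bigl|[t^j]\partial_{\mathcal{G}}^n\varphi\bigr|\cdot\bigl|\sum_{t_1}t_1^{\,j}\bigr|\le|\pi\varpi_p^{\,n}|$ for all $j\ge1$, and nothing you cite proves it. Proposition \ref{lemma; lambda exp coefficients} bounds coefficients of $\lambda(t)^k$ and $\exp_{\mathcal{G}}^k(t)$, not of $\partial_{\mathcal{G}}^n\varphi$ for an arbitrary integral $\varphi$; what integrality (note $\partial_{\mathcal{G}}=\lambda'(t)^{-1}d/dt$ preserves $\cO_{\bbC_p}[[t]]$), Lemma \ref{lemma; coleman} and the torsion valuation actually give is $\bigl|[t^j]\partial_{\mathcal{G}}^n\varphi\bigr|\le1$ and $\bigl|\sum_{t_1}t_1^{\,j}\bigr|\le\min\{|\pi|,p^{-j/(q-1)}\}$, so the product is bounded only by $|\pi|$ — short of your target by the factor $|\varpi_p|^n=p^{-np/(q-1)}$, which is unbounded in $n$. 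The $\varpi_p^n$-divisibility you need is a property of \emph{values at $t=0$} of $\partial_{\mathcal{G}}^n$ applied to integral series, not of the higher coefficients $[t^j]\partial_{\mathcal{G}}^n\varphi$, and the smallness of $\sum_{j\ge1}\bigl([t^j]\partial_{\mathcal{G}}^n\varphi\bigr)\sum_{t_1}t_1^{\,j}$ comes from cancellation \emph{across} $j$: the actual mechanism, which is the whole content of Proposition \ref{proposition; power estimate}, is that the full sum $\sum_{t_1}\varphi(t\oplus t_1)$ equals $\pi g([\pi]t)$ with $g$ integral, and $\partial_{\mathcal{G}}^n$ of a function of $[\pi]t$ produces the decisive factor $\pi^n$ through $\partial_{\mathcal{G}}(g\circ[\pi])=\pi\,(\partial_{\mathcal{G}}g)\circ[\pi]$. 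In other words, you have decomposed exactly the quantity that Proposition \ref{proposition; power estimate} controls into pieces that it does not control. The detour is also unnecessary: on the single disc $0+\pi\cO_K$ one has $\Vert x^n\Vert_{0,1}=|\pi|^n$, so (\ref{equation; STint-estimate}) gives $\bigl|\int_{\pi\cO_K}x^n\,d\mu_\varphi\bigr|\le|\overline{\gamma}[0,n]|\,|\pi/q|\,|\pi|^n\le p^{2-n}$ for $n\ge1$ and $\le p$ for $n=0$, which together with your bound over $\cO_K$ finishes i).

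Parts ii) and iii) are not proved either. Your sup-norm bound $\Vert x^m-x^n\Vert_{a,1}\le p^{-(l+1)}$ is correct, but feeding it into the strongest global estimate (\ref{equation; STint-estimate 2}) gives only $|\pi/q|\cdot p^{-(l+1)}\cdot\Vert\varphi\Vert_1\le p^{1-l}$, which is strictly weaker than the asserted $p^{-l+p/(q-1)}$ because $p/(q-1)<1$. The exponent $p/(q-1)$ does not come from the ``$\varpi_p^{-m},\varpi_p^{-n}$ prefactors'': it is the cost $|\overline{\gamma}[0,1]|=|\varpi_p|^{-1}$ of integrating the \emph{linear} term in $(x-a)$, and to realize it one must argue degree by degree — constant term divisible by $p^{l+1}$ (your Teichm\"uller choice does give $0$), linear coefficient divisible by $m-n$, and degree-$\ge2$ coefficients carrying $p^l$ \emph{together with} an extra $p^{-2}$ of smallness. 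The paper secures that last point by the algebraic regrouping $x^m-x^n=k\bigl(c_0+c_1(x-a)+\sum_{i\ge2}c_i(x-a)^i/i\bigr)$ with integers $c_i$, $p\mid c_0$, where $m-n=k(q-1)$; in your expansion the analogous step would require congruences between $\binom{m}{i}$ and $\binom{n}{i}$ to moduli increasing with $l$, which you never address. Part iii) as written only gestures at the paper's expansion of $((x^{q-1})^s-1)/s$ in powers of $\log_p x^{q-1}$ and supplies none of the bookkeeping that yields the exponent $-l-1+2p/(q-1)$.
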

\begin{proof} 
We have 
$$\int_{a+\pi \cO_K} \; x^n \; d\mu_\varphi =a^n \int_{a+\pi \cO_K} d\mu_\varphi
+ \sum_{k=1}^n \int_{a+\pi \cO_K} \;\binom{n}{k} (x-a)^k a^{n-k}\; d\mu_\varphi.$$
Then by the estimate (\ref{equation; STint-estimate})
the absolute value of the first integral is less than or equal to $p$.  
By the estimate (\ref{equation; STint-estimate4}), the absolute value of the second integral is  also 
less than or equal to $p$ since 
$||(x-a)||_{a,1} \overline{\rho}(0) = 1$. 
We put $m-n=k(q-1)$.
Then 
\begin{align*}
x^m-x^n&=x^n \sum_{i=1}^k \binom{k}{i} (x^{q-1}-1)^i \\
&=kx^n (x^{q-1}-1)+x^n \sum_{i=2}^k k \binom{k-1}{i-1} \frac{(x^{q-1}-1)^i}{i} \\
&=k\left(c_0+c_1(x-a)+c_2\frac{(x-a)^2}{2}+c_3\frac{(x-a)^3}{3}+\cdots \right)
\end{align*}
where $c_i$ are integers satisfying  $p|c_0$. 
Since $\Vert {(x-a)^i}/{i}\Vert_{a,1}
\leq p^{-2}$ for $i \geq 2$, the assertion ii)  follows from 
 the estimates (\ref{equation; STint-estimate}). 

For an integer $s$, we have 
\begin{align*}
\frac{(x^{q-1})^s-1}{s}&= \sum_{i=1}^\infty \frac{(\log_p x^{q-1})^i}{i!} s^{i-1}
 =\sum_{i=1}^\infty \sum_{n=i}^\infty c_{i,n}\frac{(x^{q-1}-1)^{n}}{n!}\\
&= \sum_{i=1}^\infty \sum_{j+k \geq i}^\infty c_{i,j,k} \frac{\pi^k}{k!}\frac{(x-a)^j}{j!} s^{i-1}
\end{align*}
for some integers $c_{i, n}$ and $c_{i,j,k}$. 
If we write $m=s_1(q-1)$ and $n=s_2(q-1)$, then 
\begin{align*}
& \frac{(x^{q-1})^{s_1}-1}{s_1}-
\frac{(x^{q-1})^{s_2}-1}{s_2}
=  \sum_{i \geq 2, j+k \geq i}^\infty 
c_{i,j,k} \frac{\pi^k}{k!}\frac{(x-a)^j}{j!} (s^{i-1}_1-s^{i-1}_2) 
\end{align*}
By the estimate (\ref{equation; STint-estimate}), 
the integral of  $\frac{\pi^k}{k!}\frac{(x-a)^j}{j!}$ is 
divisible by $p^{1-\frac{2p}{q-1}}$.  The assertion iii) follows from this fact. 
 \end{proof}

For $b \in \cO_K$ prime to $p$, we put 
$$\wp_b(z)=(1-b^2[b]^*) \wp(z) $$
and $\phi(t)=\wp_b(z)|_{z=\lambda(t)}$. 
Then $\wp_b(z)$ has no pole at $z=0$ and  
$$\wp_b(z)=\sum_{n \geq 2}\; (1-b^{n+2}) \, \frac{BH(n+2)}{n+2} \frac{z^n}{n!}$$
It is known that  $\phi(t)$ is an integral power series. 
Similarly, for $c \in \cO_K$ prime to $p$, we put 
$$\zeta_c(z)=(c-[c]^*) \zeta(z), \qquad 
\zeta_{b,c}(z)=(1-b[b]^*)\zeta_c(z)$$ 
where $\zeta(z)$ is the Weierstrass zeta function 
and $\psi(t)=\zeta_{b,c}(z)|_{z=\lambda(t)}$. 
Note that $\zeta_c(z)$ is double periodic and $\zeta_{b,c}(z)$ has 
no pole at $z=0$. 
Then 
$$\zeta_{b,c}(z)=\sum_{n \geq 3}\; (c-c^{n})(1-b^{n+1})\, \frac{BH(n+1)}{n+1} \frac{z^n}{n!}
$$
and $\psi (t)$ is an integral power series.  

\begin{lemma}
$$\sum_{z_0 \in \frac{1}{p}\Gamma/\Gamma}{\wp}_b(z+z_0)=p^2 \wp_b(pz), 
\sum_{z_0 \in \frac{1}{p}\Gamma/\Gamma}{\zeta}_c(z+z_0)=p \zeta_c(pz).$$
\end{lemma}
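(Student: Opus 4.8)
The plan is to reduce both identities to the classical distribution relations for the bare Weierstrass functions,
\begin{equation*}
\sum_{z_0 \in \frac{1}{p}\Gamma/\Gamma} \wp(z+z_0) = p^2\,\wp(pz),
\qquad
\sum_{z_0 \in \frac{1}{p}\Gamma/\Gamma} \zeta(z+z_0) = p\,\zeta(pz),
\end{equation*}
and then to recover the stated formulas by linearity. Writing $\wp_b(z)=\wp(z)-b^2\wp(bz)$ and $\zeta_c(z)=c\,\zeta(z)-\zeta(cz)$ (using $[b]^{*}\wp(z)=\wp(bz)$ and $[c]^{*}\zeta(z)=\zeta(cz)$), the key point is that multiplication by $b$, resp.\ by $c$, being prime to $p$, acts as a bijection of $\frac{1}{p}\Gamma/\Gamma$: this group is free of rank one over $\cO_{\bsK}/p\cO_{\bsK}\cong \mathbb{F}_{p^2}$ (since $p$ is inert), on which any prime-to-$p$ element is a unit. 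Hence, after reindexing $w_0=bz_0$, one has $\sum_{z_0}\wp\!\left(b(z+z_0)\right)=\sum_{w_0}\wp(bz+w_0)=p^2\wp(pbz)$, and likewise for $\zeta$; substituting into the expansions of $\wp_b$ and $\zeta_c$ produces exactly $p^2\wp_b(pz)$ and $p\,\zeta_c(pz)$.

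For the $\wp$-relation I would argue by Liouville. Both $S(z):=\sum_{z_0}\wp(z+z_0)$ and $T(z):=p^2\wp(pz)$ are meromorphic and invariant under translation by the finer lattice $\frac{1}{p}\Gamma$ — for $S$ because such a translation permutes the cosets $z_0$, for $T$ because $p\cdot\frac{1}{p}\Gamma\subseteq\Gamma$. On a fundamental domain for $\frac{1}{p}\Gamma$ each has a single pole, at $z\equiv 0$, with identical principal part $z^{-2}$: for $S$ this pole comes only from the term with $z_0\equiv 0$, and for $T$ from $p^2\wp(pz)\sim p^2(pz)^{-2}$. Thus $S-T$ is a holomorphic elliptic function, hence constant; comparing the constant Laurent coefficients at $z=0$, where $T$ contributes nothing, identifies the constant as $\sum_{z_0\neq 0}\wp(z_0)$, the sum over the nonzero $p$-torsion. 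The $\zeta$-relation then follows immediately by antidifferentiation: $\frac{d}{dz}\sum_{z_0}\zeta(z+z_0)=-S(z)=-p^2\wp(pz)=\frac{d}{dz}\,p\zeta(pz)$, so the two sides differ by a constant, which must vanish since both are odd functions of $z$.

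The one substantive point is therefore the vanishing of $\sum_{z_0\neq 0}\wp(z_0)$, and this is where I expect the main difficulty to lie. I would establish it either by the Eisenstein-series computation — expanding $\wp$ as $\sum_{\omega}(z_0-\omega)^{-2}$ minus its regularizing constant and checking that the weight-two sums over $\frac{1}{p}\Gamma$ and over $\Gamma$ cancel, the latter appearing with multiplicity $p^2-1$ — or, to sidestep the conditional convergence of the weight-two series, by observing that the values $\wp(z_0)$ are exactly the roots of the $p$-division polynomial written in $x=\wp$, whose subleading coefficient vanishes because the model $y^2=4x^3-g_2x-g_3$ has no $x^2$ term, so that the sum of the $x$-coordinates of the nonzero $p$-torsion points is zero. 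Everything else in the argument is formal; it is this cancellation that requires care.
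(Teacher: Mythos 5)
Your reduction of $\wp_b$ and $\zeta_c$ to the bare Weierstrass functions, via reindexing under the prime-to-$p$ elements $b,c$ acting bijectively on $\frac{1}{p}\Gamma/\Gamma$, is the same mechanism the paper uses implicitly, and your treatment of the $\wp$-identity actually goes further than the paper, which simply quotes $\sum_{z_0}\wp(z+z_0)=p^2\wp(pz)$ as known. Your Liouville argument is correct, and the vanishing of $\sum_{z_0\neq 0}\wp(z_0)$ is a true classical fact; the cleanest way to make your division-polynomial justification rigorous is isobarity: giving $x$, $g_2$, $g_3$ the weights $2,4,6$ coming from the scaling $(x,y)\mapsto(\lambda^2x,\lambda^3y)$, the polynomial $\psi_p$ is isobaric of weight $p^2-1$, so its subleading coefficient would have to be an isobaric expression of weight $2$ in $g_2,g_3$, of which there are none. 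That is the precise sense in which ``no $x^2$ term'' forces the cancellation you need.

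The genuine gap is in the step you call immediate, and it happens to be exactly where the content of the paper's proof lies. Because $\zeta$ is only quasi-periodic, $\zeta(z+\omega)=\zeta(z)+\eta(\omega)$ for $\omega\in\Gamma$ with $\eta\not\equiv 0$ (Legendre relation), the expression $\sum_{z_0\in\frac{1}{p}\Gamma/\Gamma}\zeta(z+z_0)$ is not even well defined: replacing one representative $z_0$ by $z_0+\omega$ changes the sum by the constant $\eta(\omega)$. Consequently your assertion that this sum ``is an odd function of $z$'' is false for a general choice of representatives, and the constant of integration need not vanish. The repair, which is the paper's actual argument, is to fix a set $S$ of representatives with $S=-S$ as a subset of $\bbC$ (possible since $p$ is odd); for such $S$ the sum is genuinely odd and the constant dies. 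The identity for $\zeta_c$ then follows by applying the bare relation to $S$ and to $cS$ (which is again symmetric), and one must finally observe that the left-hand side of the asserted $\zeta_c$-identity is independent of the choice of representatives because $\zeta_c$, unlike $\zeta$, is doubly periodic -- otherwise the statement itself would be ill-posed. In short: the point you single out as the main difficulty, $\sum_{z_0\neq 0}\wp(z_0)=0$, is standard and is absorbed by the paper into the quoted $\wp$-relation, while the one-line antidifferentiation step you wave through is where the care is actually required.
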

\begin{proof}
It is known that 
$$\sum_{z_0 \in \frac{1}{p}\Gamma/\Gamma}{\wp}(z+z_0)=p^2 \wp(pz).$$
The first formula follows from this. 
The above formula also show that for a set  $S$ of representatives of $\frac{1}{p}\Gamma/\Gamma$, 
there exists a constant $A(S)$ such that 
$$\sum_{z_0 \in S}{\zeta}(z+z_0)=p \zeta(pz) +A(S).$$
We take $S$ so that $S=-S$. Then since $\zeta(z)$ is an odd function, $A(S)$ should be zero. 
Therefore, 
$$\sum_{z_0 \in S}{\zeta}_c(z+z_0)=p \zeta_c(pz).$$ 
Since $\zeta_c(z)$ is an  elliptic function, the left hand side does not depend on 
the choice of $S$. 
\end{proof}

\begin{proposition}
We put $B(n)=BH(n+2)/(n+2)$ if $n \geq 2$ and $0$ if $n=-1, 0, 1$. 
For $n \geq 0$, we have 
$$\varpi_p^n \int_{\cO_K^\times} x^n d\mu_{\phi}=(1-p^{n})(1-b^{n+2})B(n),$$
$$\varpi_p^n \int_{\cO_K^\times} x^n d\mu_{\psi}=(1-p^{n-1})
(c-c^{n})(1-b^{n+1})B(n-1).$$
\end{proposition}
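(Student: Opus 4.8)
The plan is to split $\cO_K^\times=\cO_K\setminus\pi\cO_K$ and compute $\int_{\cO_K}x^n\,d\mu$ and $\int_{\pi\cO_K}x^n\,d\mu$ separately, then subtract; the two formulas will differ from each other only in the power of $p$ that the $\pi\cO_K$-piece contributes. The integral over all of $\cO_K$ is immediate from Proposition~\ref{proposition; estimate}~ii): taking $f(x)=x^n$ there gives
$$\varpi_p^n\int_{\cO_K}x^n\,d\mu_\varphi=\partial_{\mathcal{G}}^n\,\varphi(t)\big|_{t=0}.$$
Since $\partial_{\mathcal{G}}=\lambda'(t)^{-1}(d/dt)$ is exactly $d/dz$ in the logarithm coordinate $z=\lambda(t)$, and since $\phi(t)=\wp_b(z)$ and $\psi(t)=\zeta_{b,c}(z)$ as functions of $z$, this $n$-th derivative at the origin simply reads off a Taylor coefficient. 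From the given expansions of $\wp_b$ and $\zeta_{b,c}$ it equals $(1-b^{n+2})B(n)$ for $\phi$ and $(c-c^{n})(1-b^{n+1})B(n-1)$ for $\psi$, the convention $B(m)=0$ for $m=-1,0,1$ absorbing the small values of $n$.

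For the integral over $\pi\cO_K=0+\pi^1\cO_K$ I would use the defining formula with $N=1$, $a=0$ (so $\varphi_a=\varphi$):
$$\int_{\pi\cO_K}x^n\,d\mu_\varphi=\frac{1}{q\varpi_p^n}\left(\partial_{\mathcal{G}}^n\sum_{t_1\in\mathcal{G}[\pi]}\varphi(t\oplus t_1)\right)\Big|_{t=0}.$$
Because addition in $\mathcal{G}$ is addition in the $z$-coordinate, $\sum_{t_1}\phi(t\oplus t_1)=\sum_{t_1}\wp_b(\lambda(t)+\lambda(t_1))$, and likewise for $\zeta_{b,c}$. The crux is to match this torsion sum with the lattice sum of the preceding Lemma. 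The key geometric input — and the step I expect to be the main obstacle — is that since $p$ is inert and $E$ is supersingular, the whole $p$-torsion of $E$ lies in its formal group; hence under the isomorphism between $\mathcal{G}$ and the formal group of $E$, the set $\{\lambda(t_1):t_1\in\mathcal{G}[\pi]\}$ is precisely $\tfrac1p\Gamma/\Gamma$ (both have $q=p^2$ elements, using $\pi=-\epsilon p$ so that $\mathcal{G}[\pi]=\mathcal{G}[p]$).

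Granting this identification, the Lemma gives $\sum_{t_1}\phi(t\oplus t_1)=p^2\wp_b(p\lambda(t))$. For $\psi$ one extra point is required: the operator $[b]^*$ in $\zeta_{b,c}=(1-b[b]^*)\zeta_c$ sends $z\mapsto bz$, and since $b$ is prime to $p$, multiplication by $b$ permutes $\tfrac1p\Gamma/\Gamma$; thus the $\zeta_c$-relation of the Lemma survives the twist and yields $\sum_{t_1}\psi(t\oplus t_1)=p\,\zeta_{b,c}(p\lambda(t))$. Applying $\partial_{\mathcal{G}}^n=(d/dz)^n$ to $p^2\wp_b(pz)$ (resp. $p\,\zeta_{b,c}(pz)$) at $z=0$ extracts a factor $p^{n+2}$ (resp. $p^{n+1}$) times the same Taylor coefficient as before; dividing by $q\varpi_p^n=p^2\varpi_p^n$ gives
$$\varpi_p^n\int_{\pi\cO_K}x^n\,d\mu_\phi=p^n(1-b^{n+2})B(n),\qquad \varpi_p^n\int_{\pi\cO_K}x^n\,d\mu_\psi=p^{n-1}(c-c^{n})(1-b^{n+1})B(n-1).$$
Subtracting the $\pi\cO_K$-integral from the $\cO_K$-integral produces exactly the factors $(1-p^{n})$ and $(1-p^{n-1})$, which is the assertion.
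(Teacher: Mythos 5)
Your skeleton is the same as the paper's: split $\int_{\cO_K^\times}=\int_{\cO_K}-\int_{\pi\cO_K}$, evaluate the first piece by Proposition \ref{proposition; estimate} ii), write the second via the defining formula at level $N=1$, and convert the torsion sum into the lattice sum of the preceding Lemma. But the step you yourself flag as the crux is done incorrectly, and as written it fails. The claim that $\{\lambda(t_1):t_1\in\mathcal{G}[\pi]\}$ equals $\frac{1}{p}\Gamma/\Gamma$ is false: the $p$-adic formal logarithm annihilates torsion, since $\pi\lambda(t_1)=\lambda([\pi]t_1)=\lambda(0)=0$ gives $\lambda(t_1)=0$ for every $t_1\in\mathcal{G}[\pi]$, so that set is $\{0\}$ (moreover $\frac{1}{p}\Gamma/\Gamma$ consists of complex numbers, so no literal equality in $\bbC_p$ is even available). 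Consequently your manipulation $\sum_{t_1}\phi(t\oplus t_1)=\sum_{t_1}\wp_b(\lambda(t)+\lambda(t_1))$, read with $p$-adic values, would yield $\sum_{t_1}\phi(t\oplus t_1)=q\,\phi(t)$, which is absurd (translation by a nonzero torsion point does not fix $\phi$). The underlying reason is that $\phi(t)=\wp_b(z)|_{z=\lambda(t)}$ is only a formal identity: $p$-adically $\wp_b$ converges on a disc of small radius, and $t\oplus t_1$ lies outside the region where the composition identity can be evaluated, so the substitution $s=t_1$ into the two-variable formal identity is not legitimate. In the cyclotomic model $\mathcal{G}=\widehat{\bbG}_m$ with $\phi(t)=1+t=\exp(z)|_{z=\log(1+t)}$ and $t_1=\zeta_p-1$, the same reasoning would prove $\zeta_p(1+t)=1+t$.

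The missing ingredient, which is exactly what the paper supplies by citing Lemma 2.18 of \cite{BK1}, is the algebraicity of translation by torsion points: $\wp_b(z+z_0)$ is an algebraic function on $E$ (a rational expression in $\wp(z),\wp'(z)$ via the addition formulas, with algebraic coefficients), and therefore the identity $\phi(t\oplus t_0)=\wp_b(z+z_0)|_{z=\lambda(t)}$ holds as power series in $t$, where $z_0$ is the image of $t_0$ under $\mathcal{G}[p]\to E[p]\to\frac{1}{p}\Gamma/\Gamma$; the correspondence between $t_0$ and $z_0$ goes through the algebraic torsion points of $E$ and the complex uniformization, not through the $p$-adic logarithm. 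Your remaining points are sound and do match the paper: supersingularity ($p$ inert) is what makes $\mathcal{G}[p]\to E[p]$ bijective, so the torsion sum captures all of $\frac{1}{p}\Gamma/\Gamma$; the twist by $[b]^*$ is harmless because $b$ prime to $p$ permutes $\frac{1}{p}\Gamma/\Gamma$, giving $\sum_{t_1}\psi(t\oplus t_1)=p\,\zeta_{b,c}(p\lambda(t))$; and the final bookkeeping (factors $p^{n}$ and $p^{n-1}$ after dividing by $q=p^2$) reproduces the stated formulas. Once the algebraicity lemma replaces your identification, your argument coincides with the paper's proof.
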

\begin{proof}
Since $\wp_b(z)$ and $\zeta_{b,c}(z)$ are double periodic, 
for $t_0 \in \mathcal{G}[p]$ 
we have $\psi(t \oplus t_0)={\zeta}_{b,c}(z +z_0) |_{z=\lambda(t)}$ and 
$\phi(t \oplus t_0)=\wp_{b}(z +z_0) |_{z=\lambda(t)}$
where $z_0$ is an image of $t_0$ by  
$\mathcal{G}[p] \rightarrow E[p] \rightarrow \frac{1}{p} \Gamma /\Gamma$. 
(See for example, \cite{BK1}, Lemma 2.18.)
From this fact and the previous lemma, we have 
$$\phi(t)-\frac{1}{q} \sum_{t_0 \in \mathcal{G}[p]} \phi(t \oplus t_0)
=(\wp_{b}(z)-\wp_{b}(pz))\;|_{z=\lambda(t)},
$$
$$\psi(t)-\frac{1}{q} \sum_{t_0 \in \mathcal{G}[p]} \psi(t \oplus t_0)
=(\zeta_{b,c}(z)-p^{-1}\zeta_{b,c}(pz))\;|_{z=\lambda(t)}. 
$$
Hence 
\begin{align*}
\varpi_p^n \int_{\cO_K^\times} x^n d\mu_{\phi}&=
\partial_{\mathcal{G}}^n \left(\phi(t)-\frac{1}{q} \sum_{t_0 \in \mathcal{G}[p]} \phi(t \oplus t_0)
 \right) \biggr |_{t=0}\\
&= \partial_z (\wp_{b}(z)-\wp_{b}(pz))|_{z=0}
=(1-p^{n})(1-b^{n+2})B(n). 
\end{align*}
The other equality is also shown similarly. 
\end{proof}

We put 
$$c(n)=(1-p^{n})(1-b^{n+2})\frac{BH(n+2)}{n+2}.$$

\begin{corollary}\label{corollary; c(n)}
i) We have 
$$\left |\frac{c(n)}{\varpi_p^{n}} \right | \leq p.$$
Furthermore, if $n \equiv 0 \mod q-1$, then 
$$\left | \frac{c(n)}{\varpi_p^n} \right | \leq p^{\frac{p}{q-1}}.$$ 
ii) Suppose that $m \equiv n \mod p^l(q-1)$. Then 
$$\frac{c(m)}{\varpi_p^m} \equiv \frac{c(n)}{\varpi_p^n} \quad \mod p^{l-\frac{p}{q-1}}\cO_{\bbC_p}.$$
Furthermore, if $n \not\equiv 0 \mod q-1$, then 
$$\frac{c(m)}{\varpi_p^m} \equiv \frac{c(n)}{\varpi_p^n} \quad \mod p^{l}\cO_{\bbC_p}.$$
If  $n \equiv 0 \mod q-1$, then 
$$\frac{c(m)}{m\varpi_p^m} \equiv \frac{c(n)}{n\varpi_p^n} \quad \mod p^{l+1-\frac{2p}{q-1}}.$$
\end{corollary}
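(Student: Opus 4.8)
The plan is to reduce the whole statement to the three moment estimates of Proposition \ref{proposition; moment congruence}, applied to the integral power series $\phi$. The bridge is the formula of the preceding Proposition which, together with $B(0)=B(1)=0$, reads
$$\frac{c(n)}{\varpi_p^n}=\int_{\cO_K^\times}x^n\,d\mu_\phi\qquad(n\ge 0),$$
where $c(0)=c(1)=0$ and in particular $\int_{\cO_K^\times}d\mu_\phi=0$. Since $\phi$ is integral, Proposition \ref{proposition; moment congruence} applies verbatim to $\mu_\phi$, and each quantity in the Corollary becomes a moment integral; for instance $\frac{c(m)}{\varpi_p^m}-\frac{c(n)}{\varpi_p^n}=\int_{\cO_K^\times}(x^m-x^n)\,d\mu_\phi$.

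Granting this dictionary, most assertions are immediate. The general bound in i) is exactly Proposition \ref{proposition; moment congruence} i). For the refined bound when $(q-1)\mid n$, I would use $\int_{\cO_K^\times}d\mu_\phi=0$ to rewrite $c(n)/\varpi_p^n=\int_{\cO_K^\times}(x^n-x^0)\,d\mu_\phi$ and apply part ii) of that Proposition to the pair $(n,0)$ with $l=0$ (legitimate since $n\equiv 0\bmod q-1$), giving $|c(n)/\varpi_p^n|\le p^{p/(q-1)}$. The first congruence in ii) is part ii) of the Proposition read as a statement on $p$-adic valuation (the bound $p^{-l+p/(q-1)}$ places the difference in $p^{\,l-p/(q-1)}\cO_{\bbC_p}$), and the third congruence (the case $(q-1)\mid n$) follows from part iii) after absorbing the constant term, $\frac1m\int_{\cO_K^\times}x^m-\frac1n\int_{\cO_K^\times}x^n=\int_{\cO_K^\times}\bigl(\tfrac{x^m-1}{m}-\tfrac{x^n-1}{n}\bigr)\,d\mu_\phi$.

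The only real work is the second congruence: sharpening the general estimate by the factor $p^{p/(q-1)}=|\varpi_p|^{-1}$ so as to get $\frac{c(m)}{\varpi_p^m}\equiv\frac{c(n)}{\varpi_p^n}\bmod p^l$ when $n\not\equiv 0\bmod q-1$. I would first dispose of the odd exponents: since $\wp$ is even we have $BH(k)=0$ for odd $k$, hence $c(n)=0$ for odd $n$; as $q-1$ is even, $m\equiv n\bmod q-1$ forces $m$ and $n$ to have equal parity, so for odd $n$ the congruence is the trivial $0\equiv 0$. This leaves the even $n\not\equiv 0\bmod q-1$, and for those the loss $p^{p/(q-1)}$ in Proposition \ref{proposition; moment congruence} ii) comes solely from the first–order term of the expansion $x^m-x^n=k\bigl(c_0+c_1(x-a)+\sum_{i\ge2}c_i(x-a)^i/i\bigr)$; the constant and the $i\ge 2$ terms already contribute $p^{-l}$. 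The idea is to sum this first–order term over the discs $a=\omega$, $\omega\in\mu_{q-1}$, before estimating: using $\omega^{m-1}=\omega^{n-1}$ and $m-n=k(q-1)$, its total is $k(q-1)\sum_{\omega}\omega^{n-1}\int_{\omega+\pi\cO_K}(x-\omega)\,d\mu_\phi$. A direct computation of this first moment shows that its part of size $|\varpi_p|^{-1}$ is independent of $\omega$, so the orthogonality $\sum_{\omega}\omega^{n-1}=0$ annihilates it unless $n\equiv 1\bmod q-1$; and $n\equiv 1$ is odd, already handled by parity. Thus for even $n\not\equiv 0$ only the $\varpi_p$–divisible part of the first moment survives, which I expect to bound by $1$ using the estimates for $\overline{\gamma}$, $\underline{\gamma}$ together with Proposition \ref{lemma; lambda exp coefficients}, restoring the sharp bound $|\int_{\cO_K^\times}(x^m-x^n)\,d\mu_\phi|\le p^{-l}$. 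Controlling this surviving term is the technical heart of the argument; once the Corollary is in hand, deducing the congruences of Katz (Theorem \ref{theorem; Ka2}) and Chellali (Theorem \ref{theorem; Che}) for $L(n)$ is the routine valuation bookkeeping comparing $v_p(\varpi_p^n)=np/(q-1)$ with $[np/(p^2-1)]$ and absorbing the unit $\gamma$.
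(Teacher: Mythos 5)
Your dictionary $c(n)/\varpi_p^n=\int_{\cO_K^\times}x^n\,d\mu_\phi$, your proof of i), of the first congruence in ii), and of the third congruence in ii) coincide with the paper's proof (which applies Proposition \ref{proposition; moment congruence} i), ii) with $l=0$, and iii) to $\mu_\phi$, using $\int_{\cO_K^\times}d\mu_\phi=0$). The gap is precisely at what you yourself call the only real work: the congruence modulo $p^l$ when $n\not\equiv 0\bmod q-1$. Your mechanism rests on the claim that the part of $\int_{\omega+\pi\cO_K}(x-\omega)\,d\mu_\phi$ of size $|\varpi_p|^{-1}$ is independent of $\omega$; this is asserted, not proved, and it is not innocuous. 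Writing $\mu_\varphi(U)$ for $\int_U d\mu_\varphi$ and unwinding the definition (using that $\partial_{\mathcal G}$ is translation invariant and $\lambda(t\oplus t_1)=\lambda(t)+\lambda(t_1)$) gives
$$\int_{\omega+\pi\cO_K}(x-\omega)\,d\mu_\phi=\frac{1}{\varpi_p}\,\mu_{\partial_{\mathcal G}\phi}\bigl(\omega+\pi\cO_K\bigr)-\omega\,\mu_\phi\bigl(\omega+\pi\cO_K\bigr),$$
so your claim amounts to saying that the disc measures of the derived distribution $\mu_{\partial_{\mathcal G}\phi}$ are constant in $\omega$ up to $O(\varpi_p)$, equivalently that the twisted sum $\sum_\omega\omega^{n-1}\mu_{\partial_{\mathcal G}\phi}(\omega+\pi\cO_K)$ has absolute value at most $|\varpi_p|=p^{-p/(q-1)}$, while each individual disc measure is only bounded by $p$. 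Nothing in the estimates you quote (Proposition \ref{lemma; lambda exp coefficients}, Theorem \ref{theorem; main}) produces this cancellation; obtaining it would mean redoing the power-sum analysis of Section \ref{Power sums} while tracking the dependence on the disc, which is a substantial new argument rather than a verification. (Your parity reduction $c(n)=0$ for odd $n$ is correct but does not touch the even exponents $n\not\equiv 0\bmod q-1$.)

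The paper closes exactly this case by a device you never invoke: the second distribution $\mu_\psi$ attached to the Weierstrass zeta function $\zeta_{b,c}$, introduced immediately before the corollary. Its moments carry a degree shift and a twist: $\varpi_p^{n+1}\int_{\cO_K^\times}x^{n+1}\,d\mu_\psi=(c-c^{n+1})\,c(n)$. Applying Proposition \ref{proposition; moment congruence} ii) to $\mu_\psi$ at the exponents $m+1\equiv n+1\bmod p^l(q-1)$ and then multiplying by $\varpi_p$, whose absolute value $p^{-p/(q-1)}$ exactly absorbs the loss $p^{p/(q-1)}$, yields
$$\left|\varpi_p\int_{\cO_K^\times}(x^{m+1}-x^{n+1})\,d\mu_\psi\right|\le p^{-l};$$
the twist $c-c^{n+1}=c(1-c^{n})$ is a unit (take $c$ a primitive $(q-1)$-st root of unity) precisely when $n\not\equiv 0\bmod q-1$, and the discrepancy between the factors $(c-c^{m+1})^{-1}$ and $(c-c^{n+1})^{-1}$ is controlled by $c^{m+1}\equiv c^{n+1}\bmod p^{l+1}$ together with the bound $|c(m)/\varpi_p^m|\le p$ from part i). This shift-and-twist argument is the missing idea; without it, or a completed version of your orthogonality claim, the second congruence in ii) remains unproven.
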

\begin{proof}
For i), the first inequality follows from Proposition \ref{proposition; moment congruence} i) 
for $\mu_\phi$. The second inequality follows from 
Proposition \ref{proposition; moment congruence} ii) for $l=0$. 
Note that  $\int_{\cO_K^\times}  d\mu_{\phi}=0$. 
For ii), the first and third congruences follow from Proposition \ref{proposition; moment congruence} 
for  $\phi$, and 
the second inequality for $\psi$. 
\end{proof}

Next,  we compare $c(n)$ with $L(n)$. 

\begin{lemma}\label{lemma; ugamma}
We choose $u \in \bbC_p$ so that $\varpi_p^{q-1}=p^{p}u^{q-1}$. 
Then $u$ is a unit of $\cO_{\bbC_p}$ and 
$$ \left(\frac{u}{\gamma}\right)^{q-1} \equiv 1 \quad \mod p.$$
\end{lemma}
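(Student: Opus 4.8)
The plan is to convert the statement into a single precise congruence for $\varpi_p^{q-1}$ and then read that congruence off from the integrality of the Lubin--Tate character. First I would settle the unit claim by a valuation count: here $e=1$, $q=p^2$ and $|\varpi_p|=p^{-s}$ with $s=\tfrac1{p-1}-\tfrac1{q-1}$, so
$$
v_p\bigl(\varpi_p^{q-1}\bigr)=(q-1)s=\frac{q-1}{p-1}-1=(p+1)-1=p=v_p(p^p),
$$
whence $u^{q-1}=\varpi_p^{q-1}/p^p$ is a unit and $u\in\cO_{\bbC_p}^\times$. For the congruence I would use $\pi=-\epsilon p$, i.e. $-\epsilon^{-1}=p/\pi$, to rewrite $\gamma^{q-1}=q!/\bigl(\pi p^{p}(q-1)\bigr)$, so that $(u/\gamma)^{q-1}=\pi(q-1)\varpi_p^{q-1}/q!$. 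Since $q-1=p^2-1\equiv-1\bmod p$ and this quantity is a unit, the target $(u/\gamma)^{q-1}\equiv1\bmod p$ is equivalent to
$$
\eta:=\frac{\pi\,\varpi_p^{q-1}}{q!}\equiv-1\pmod p .
$$

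Next I would reduce to a convenient model. The period $\varpi_p$ is determined only up to $\cO_K^\times$, and because the residue field is $\mathbb F_q$ every unit $v\in\cO_K^\times$ satisfies $v^{q-1}\equiv1\bmod p$; moreover all Lubin--Tate groups attached to $\pi$ are $\cO_K$-isomorphic, and such an isomorphism changes $\varpi_p$ only by a unit. Hence $\eta\bmod p$ is independent of all these choices, and I may compute it for the Lubin--Tate group whose logarithm is $\lambda(t)=\sum_{m\ge0}t^{q^m}/\pi^m$, the very series used in Proposition~\ref{lemma; lambda exp coefficients}. The core input is then the integrality of the character $\theta(t)=\exp(\varpi_p\lambda(t))\in\cO_{\bbC_p}[[t]]$, a generator of $\mathrm{Hom}_{\cO_{\bbC_p}}(\mathcal G,\widehat{\bbG}_m)\cong\cO_K$. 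Extracting the coefficient of $t^q$ gives
$$
[\theta]_{t^q}=\frac{\varpi_p}{\pi}+\frac{\varpi_p^{q}}{q!}=\frac{\varpi_p}{\pi}\,(1+\eta)\in\cO_{\bbC_p},
$$
so that $v_p(1+\eta)\ge 1-s$. This already shows $\eta\equiv-1$, but only to precision $\mathfrak m^{\,1-s}$ with $1-s<1$, which falls short of a congruence modulo $p$.

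This gap is the \emph{main obstacle}, and it is where the genuine content of the lemma sits: the lowest nonlinear coefficient simply does not determine the period to precision $p$, so one must show the stronger $v_p(1+\eta)\ge1$, i.e. $\varpi_p^{q-1}\equiv -q!/\pi\pmod{p^{p+1}}$. I would attack it by analysing the coefficient of $t^{q^2}$ (and inductively of $t^{q^j}$), where the two extreme contributions $\varpi_p^{q}/(q!\,\pi^{q})$ and $\varpi_p^{q^2}/(q^2)!$ dominate; their forced cancellation brings in the unit $M=(q!)^{q+1}/(q^2)!$, and since $1+\eta^{q}=(1+\eta)\,G(\eta)$ with $v_p\bigl(G(\eta)\bigr)=v_p(q)=2$, the required improvement is controlled by a Wolstenholme-type congruence for $M$. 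The bookkeeping here is delicate because many intermediate terms of the $t^{q^2}$-coefficient also have negative valuation and must be cancelled simultaneously. A cleaner route, which I would prefer if it can be pushed through, is to apply a Dwork--Honda integrality criterion to $\theta$ using the functional equation $\theta(t)^{\pi}=\theta(t^{q})\exp(\pi\varpi_p t)$ coming from $\pi\lambda(t)-\lambda(t^{q})=\pi t$, thereby pinning $\varpi_p^{q-1}$ modulo $p^{p+1}$ in one step. Once $\eta\equiv-1\bmod p$ is secured, combining it with the reductions of the first two paragraphs yields $(u/\gamma)^{q-1}\equiv1\bmod p$ and completes the proof.
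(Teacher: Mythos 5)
Your setup reproduces the paper's proof almost step for step: the valuation count $v_p(\varpi_p^{q-1})=(q-1)s=p$ giving $u\in\cO_{\bbC_p}^{\times}$, the algebraic reduction of the target to a congruence $\eta\equiv-1\bmod p$ for $\eta$ essentially equal to $\pi\varpi_p^{q-1}/q!$, and the extraction of the $t^q$-coefficient of the integral series $\exp(\varpi_p\lambda(t))$ are exactly the paper's three moves (the paper works with the model $[\pi]T=\pi T+T^q$, whose logarithm has $t^q$-coefficient $\theta=1/\epsilon(p^q-p)$ rather than $1/\pi$; your rescaling to the Honda model by a unit of $\cO_K^{\times}$, harmless because $(\cO_K^\times)^{q-1}\equiv 1\bmod p$, is a legitimate substitute). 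But your proposal then stops at the decisive point: you prove only $v_p(1+\eta)\geq 1-s$ with $s=p/(p^2-1)$, you correctly observe this is weaker than the needed $v_p(1+\eta)\geq 1$, and neither of your two repair strategies is carried out. The $t^{q^2}$-coefficient analysis is left as a sketch of ``delicate bookkeeping,'' and the Dwork--Honda route is worse than speculative: since every nonconstant coefficient of $\exp(\pi\varpi_p t)$ has valuation $>1$, reducing the functional equation $g(t)^{\pi}=\exp(\pi\varpi_p t)\,g(t^q)$ modulo $p$ erases precisely the term carrying the information about $\varpi_p$, so at the precision where that equation is easy to use it says nothing about $\varpi_p^{q-1}$ modulo $p^{p+1}$. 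As a proof, therefore, the proposal has a genuine gap: the central congruence is never established.

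The instructive part of the comparison is that the step you refused to take is the step the paper takes in one sentence. The paper writes the $t^q$-coefficient as $\varpi_p\theta\bigl(\tfrac{\varpi_p^{q-1}}{\theta q!}+1\bigr)$ and asserts that, since $\varpi_p\theta$ is not integral, integrality forces $v_p\bigl(\tfrac{\varpi_p^{q-1}}{\theta q!}+1\bigr)\geq 1$. Your computation shows this does not follow as stated: $v_p(\varpi_p\theta)=s-1\in(-1,0)$, so integrality of the product only forces valuation $\geq 1-s$, and an element with valuation exactly $1-s$ would still make the coefficient integral; moreover, since $\varpi_p$ (hence this quantity) is moved by inertia through a character of infinite order, it lies in no finite extension of $\bbQ_p$, so there is no discreteness of the value group that would round $1-s$ up to $1$. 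Equivalently, the paper's assertion amounts to the strictly stronger divisibility $v_p(a_q)\geq v_p(\varpi_p)$ for the $t^q$-coefficient $a_q$, i.e.\ $\exp(\varpi_p\lambda(t))\equiv 1\bmod \varpi_p$ in degree $q$, which requires an input beyond the integrality used on the page --- for instance the divisibility results of Katz \cite{Ka2}, or a completed version of the higher-coefficient cancellation you outline. So while your attempt is incomplete, your diagnosis of where the real content lies is accurate, and it exposes that the paper's own justification of this lemma is, at best, elliptical at exactly the same spot.
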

\begin{proof}
Simple calculation shows the valuation of $u$ is zero. 
We have 
$\lambda(t)=t+\theta t^q+\cdots $ with $\theta=1/\epsilon(p^q-p)$. 
The $q$-th coefficient of the integral power series 
$\exp(\varpi_p \lambda(t))$ is 
$$\frac{\varpi_p^q}{q!}+\varpi_p \theta =\varpi_p \theta \left( 
\frac{\varpi_p^{q-1}  }{\theta q!}+1\right).$$
Since $\varpi_p \theta$ is not integral, the valuation 
$v_p(({\varpi_p^{q-1}  }/{\theta q!})+1) \geq 1$. Thus 
$$\frac{\varpi_p^{q-1}  }{\theta q!}+1 \equiv 
\left(\frac{u}{\gamma}\right)^{q-1} \frac{(1-p^{q-1})}{(q-1)}+1
\equiv -\left(\frac{u}{\gamma}\right)^{q-1}  +1  \quad \mod p.$$
must be congruent to zero. 
\end{proof}

We write 
$n=n' (q-1)+r$ with $0 \leq r <q-1$ and put $c_r=u^{-r} p^{-[pr/(q-1)]}  \varpi_p^r$. Then 
$$\varpi_p^n
=c_r p^{[pn/(q-1)]} u^n .$$
Hence we have 
$$L(n)=c_r \left( \frac{u}{\gamma}\right)^n\frac{c(n)}{\varpi^n_p}.$$
Therefore by Corollary \ref{corollary; c(n)} i), we have 
$|L(n)|<p$. (Note that if $n \not\equiv 0 \mod q-1$, then  $|c_r|<1$).  
Since $L(n)$ is contained in the unramified 
field $K$,  we have $L(n) \in \cO_K$. 
Similary, for $m \equiv n \mod p^l(q-1)$,  the fact $L(n) \in \cO_K$, 
Lemma \ref{lemma; ugamma}  and  Corollary \ref{corollary; c(n)} ii) 
imply  the congruence  
$$L(m)  \equiv {L(n)} \left(\frac{u}{\gamma}\right)^{m-n}  \equiv L(n) \quad \mod p^{l-\frac{p}{q-1}}.$$
Since this is a congruence between elements of $\cO_K$, we have 
$$L(m) \equiv L(n) \quad \mod p^{l}.$$
Similarly,  from Corollary \ref{corollary; c(n)} we obtain the congruences
originally proved by
Katz \cite[Theorem 3.1]{Ka2} and Chellali \cite[Th\'eor\`em 1.1]{Ch}.

\begin{theorem}\label{theorem, KC}
i) We have $L(n) \in \cO_K$. \\
ii) Suppose that $m \equiv n \mod p^l(q-1)$. Then 
$$L(m) \equiv L(n) \quad \mod p^{l}.$$
Furthermore, if $n \not\equiv 0 \mod q-1$, then 
$$L(m) \equiv L(n) \quad \mod p^{l+1}.$$
If  $n \equiv 0 \mod q-1$, then 
$$L'(m) \equiv L'(n) \quad \mod p^{l+1}.$$
\end{theorem}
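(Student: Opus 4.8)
The plan is to route everything through the distribution $\mu_\phi$ attached to the integral power series $\phi(t)=\wp_b(\lambda(t))$. The moment formula proved just above Corollary \ref{corollary; c(n)} identifies
$$\frac{c(n)}{\varpi_p^n}=\int_{\cO_K^\times} x^n\,d\mu_\phi, \qquad c(n)=(1-p^n)(1-b^{n+2})\frac{BH(n+2)}{n+2},$$
so the arithmetic of the Bernoulli--Hurwitz numbers is encoded in the moments of a distribution coming from $p$-adic Fourier theory. Because we are in the inert case ($K/\bbQ_p$ unramified, $e=1\le p-1$, $q=p^2$), the sharp forms of the estimates apply, and the bounds and congruences for these moments collected in Corollary \ref{corollary; c(n)} (ultimately from Proposition \ref{proposition; moment congruence}) supply all the analytic input. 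The only remaining task is to strip off the transcendental normalization relating $\varpi_p$ to the period $\gamma$ in the definition of $L(n)$.

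For part i) I would first make the relation between $\varpi_p$ and $\gamma$ explicit. Writing $n=n'(q-1)+r$ with $0\le r<q$ and using Lemma \ref{lemma; ugamma} together with the defining relation for $\gamma$, one gets $\varpi_p^n=c_r\,p^{[pn/(q-1)]}u^n$ with $c_r=u^{-r}p^{-[pr/(q-1)]}\varpi_p^r$, and hence the clean identity
$$L(n)=c_r\left(\frac{u}{\gamma}\right)^n\frac{c(n)}{\varpi_p^n}.$$
Here $(u/\gamma)^n$ is a unit and a direct count gives $v_p(c_r)=\{pr/(q-1)\}\ge 0$, so $|c_r|\le 1$. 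Combining this with the bound $|c(n)/\varpi_p^n|\le p$ from Corollary \ref{corollary; c(n)} i) yields $|L(n)|<p$. Since $L(n)\in K$ and $K/\bbQ_p$ is unramified, its valuation is an integer strictly greater than $-1$, hence non-negative, and $L(n)\in\cO_K$.

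For the congruences in part ii), note that $m\equiv n \bmod p^l(q-1)$ forces $m\equiv n \bmod q-1$, so $m$ and $n$ share the same residue $r$ and the same factor $c_r$. The two remaining ingredients are a lift of Lemma \ref{lemma; ugamma}, namely $(u/\gamma)^{m-n}\equiv 1 \bmod p^{l+1}$, which follows from $(u/\gamma)^{q-1}\equiv 1 \bmod p$ by raising to the $p^l$-th power in the unramified ring $\cO_K$, and the moment congruences of Corollary \ref{corollary; c(n)} ii). Feeding these into the identity above and using $v_p(c(n)/\varpi_p^n)\ge -1$ to control the error coming from $(u/\gamma)^{m-n}-1$, I obtain $L(m)\equiv L(n)(u/\gamma)^{m-n}\equiv L(n) \bmod p^{\,l-p/(q-1)}$; since both members now lie in $\cO_K$, the fractional exponent rounds up and gives $L(m)\equiv L(n)\bmod p^l$. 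The sharpening to $p^{l+1}$ when $n\not\equiv 0\bmod q-1$ comes from the better moment congruence (modulo $p^l$) combined with the strictly positive valuation $v_p(c_r)>0$, which pushes $v_p(L(m)-L(n))$ past $l$; when $n\equiv 0\bmod q-1$ one has $c_r=c_0=1$, and passing to $L'(n)=L(n)/n$ lets one invoke the third congruence of Corollary \ref{corollary; c(n)}, the one obtained from the distribution $\psi$ attached to the Weierstrass zeta function.

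The main obstacle is the valuation bookkeeping, since the moment congruences only hold modulo fractional powers such as $p^{\,l-p/(q-1)}$ and $p^{\,l+1-2p/(q-1)}$ in $\cO_{\bbC_p}$. The decisive step is the separation of the genuinely arithmetic factor $c(n)/\varpi_p^n$ from the unit period factors $c_r$ and $(u/\gamma)^n$, after which the integrality of $L(n)$ already established in i) permits rounding every fractional exponent up to the next integer. Care is also needed to check that the lift $(u/\gamma)^{m-n}\equiv 1\bmod p^{l+1}$ carries exactly the exponent $l+1$ and that dividing by $n$ in the case $n\equiv 0\bmod q-1$ does not destroy integrality.
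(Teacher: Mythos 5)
Your route is exactly the paper's: the same identity $L(n)=c_r\,(u/\gamma)^n\,c(n)/\varpi_p^n$, the same appeal to Corollary \ref{corollary; c(n)} for the moment bounds and congruences, Lemma \ref{lemma; ugamma} lifted to $(u/\gamma)^{m-n}\equiv 1 \bmod p^{l+1}$, and the rounding of fractional exponents via the fact that elements of the unramified field $K$ have integer valuation. Part ii) as you sketch it is sound: the first congruence and the $n\not\equiv 0 \bmod q-1$ sharpening work exactly as you describe, and your brevity on the $L'$ case matches the paper's own terse ``similarly''.

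However, part i) as written has a genuine gap. From $|c_r|\le 1$ and $|c(n)/\varpi_p^n|\le p$ you can only conclude $|L(n)|\le p$, i.e.\ $v_p(L(n))\ge -1$; integer-valuation rounding from $\ge -1$ does \emph{not} give integrality, so the claimed strict inequality $|L(n)|<p$ is a non-sequitur from what you cite. The strict bound requires the case split you only make later, in part ii): when $n\not\equiv 0\bmod q-1$ one has $v_p(c_r)=\{pr/(q-1)\}>0$ strictly, and $|L(n)|<p$ follows; but when $n\equiv 0\bmod q-1$ one has $c_r=1$ exactly, and then the only way out is the \emph{second}, refined bound of Corollary \ref{corollary; c(n)} i), namely $|c(n)/\varpi_p^n|\le p^{p/(q-1)}<p$, which you never invoke anywhere in the proposal. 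This refined bound is precisely why the paper states Corollary \ref{corollary; c(n)} i) in two parts, and why its argument carries the parenthetical remark about $|c_r|<1$. Two smaller slips, neither fatal: your justification of $(u/\gamma)^{m-n}\equiv 1\bmod p^{l+1}$ ``in the unramified ring $\cO_K$'' is off, since $u$ is defined from $\varpi_p$ and need not lie in $K$; the correct statement is that $x\equiv 1\bmod p$ implies $x^{p^l}\equiv 1 \bmod p^{l+1}$ for odd $p$ in $\cO_{\bbC_p}$, which suffices. Also, the third congruence of Corollary \ref{corollary; c(n)} ii) (the $L'$ one) is the one obtained from Proposition \ref{proposition; moment congruence} iii) applied to $\phi$; it is the second congruence (the $n\not\equiv 0$ case) that uses the zeta-function distribution $\psi$ — a misattribution that does not affect the logic since you use only the statements, not their proofs.
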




\end{document}